\numberwithin{equation}{section}
\renewcommand{\epsilon}{\varepsilon}
\renewcommand{\i}{{\ensuremath{\mathrm{i}}}}
\DeclareSymbolFont{SY}{U}{psy}{m}{n}
\DeclareMathSymbol{\emptyset}{\mathord}{SY}{'306}
\DeclareMathOperator{\const}{const}
\newcommand{\R}{\mathbb{R}}
\newcommand{\C}{\mathbb{C}}
\newcommand{\Z}{\mathbb{Z}}
\newcommand{\N}{\mathbb{N}}
\newcommand{\cH}{{\mathcal H}}
\newcommand{\cW}{{\mathcal W}}
\newcommand{\ii}{\mathrm{i}}
\newtheorem{theorem}{Theorem}[section]{\bf}{\it}
{\bf}{\it}
{\bf}{\it}
{\bf}{\it}
{\it}{\rm}
\newtheorem{lemma}[theorem]{Lemma}{\bf}{\it}
{\it}{\rm}
\newtheorem{definition}[theorem]{Definition}{\bf}{\it}
{\bf}{\it}
{\bf}{\it}
\newtheorem{assumption}{Assumption}
\title[Periodic Solutions]{Existence and stability of periodic solutions in a neural field equation}
\date{\today; File: \textbf{\jobname.tex}}
\author[K.~Kolodina]{Karina Kolodina}
\address{K.~Kolodina, Faculty of Science and Technology,
Norwegian University of Life Sciences, P.O. Box 5003, N-1432 {\AA}s,
Norway}
\email{karina.kolodina@nmbu.no}
\author[V.~Kostrykin]{Vadim Kostrykin}
\address{V.~Kostrykin, FB 08 - Institut f\"{u}r Mathematik,
Johannes Gutenberg-Universit\"{a}t Mainz,
Staudinger Weg 9,
55099 Mainz,
Germany}
\email{kostrykin@mathematik.uni-mainz.de}
\author[A.~Oleynik]{Anna Oleynik}
\address{A.~Oleynik, Faculty of Science and Technology,
Norwegian University of Life Sciences, P.O. Box 5003, 1432 {\AA}s,
Norway}
\curraddr {Department of Mathematics, University of Bergen,
Postboks 7803, 5020 Bergen, Norway
}
\email{anna.oleynik@uib.no}
\subjclass[2000]{45L05, 47H30, 47N60, 47G10, 47B48, 47B35}
\keywords{nonlinear integral equations, sigmoid type nonlinearities, neural field model, periodic solutions, block Laurent operators}
\begin{document}

\maketitle
\begin{abstract}
We study the existence and linear stability of stationary periodic solutions to a neural field model, an intergo-differential equation of the Hammerstein type. Under the assumption that the activation function is a discontinuous step function  and the kernel is decaying sufficiently fast, we formulate necessary and sufficient conditions for the existence of a special class of solutions that we call 1-bump periodic solutions. We then analyze the stability of these solutions by studying the spectrum of the Frechet derivative of the corresponding Hammerstein operator. We prove that the spectrum of this operator agrees up to zero with the spectrum of a block Laurent operator. We show that the non-zero spectrum consists of only eigenvalues and obtain an analytical expression for the eigenvalues and the eigenfunctions. The results are illustrated by multiple examples.
\end{abstract}

\section{Introduction}

The behavior of a single layer of neurons can be modeled by a nonlinear integro-differential equation of the Hammerstein type,
\begin{equation}\label{Amari_model}
\frac{\partial}{\partial t}u(x,t) = -u(x,t) + \int_{\R} \omega(x-y) f(u(y, t)-h)dy.
\end{equation}
Here $u(x,t)$ and $f(u(x,t)-h)$ represent the averaged local activity and the firing rate of neurons at the position $x\in \R$ and time $t>0$, respectively. The parameter $h\in \R$ denotes the threshold of firing and  $\omega(x-y)$ describes a coupling between neurons at positions $x$ and $y$.

The model \eqref{Amari_model} belongs to a special class of models, so called neural field models, where the neural tissue is treated as a continuous structure,  and is often referred to as the Amari model.  Since the original paper by Amari \cite{Amari}, this model has been studied in numerous mathematical papers, for a review see, e.g., \cite{coombes2005waves, ermentrout1998neural} and \cite{coombes2014neural}. In particular, the global existence and uniqueness of solutions to the initial value problem for \eqref{Amari_model} under rather mild assumptions on $f$ and $\omega$ has been proven in \cite{potthast2010existence}.

In \cite{Amari} Amari studied pattern formation in \eqref{Amari_model} for a model under the simplifying assumption that $f$ is the unit step function $H$, and $\omega$ is of the "lateral-inhibitory type", i.e., continuous, integrable and even, with $\omega(0)>0$ and having exactly one positive zero.  In particular, he analyzed the existence and stability of stationary localized solutions, or so called 1-bump solutions, of the fixed point problem
\begin{equation}\label{eq:u=Hu}
u(x)=(\cH u)(x), \quad (\cH u)(x)= \int_{-\infty}^{+\infty} \omega(x-y) f(u(y)-h)dy.
\end{equation}

The equations \eqref{Amari_model} and \eqref{eq:u=Hu} have been studied with respect to various combinations of firing rate functions and connectivity functions, see \cite{coombes2005waves, ermentrout1998analysis, coombes2014neural}.
Common examples of $\omega$  are the exponentially decaying function,
\begin{equation}\label{eq:exp}
     \omega(x)= S e^{-s|x|}, \quad S,s>0,
\end{equation}
 the so-called wizard-hat function,
  \begin{equation}\label{eq:exp-exp}
    \omega(x)=S_1 e^{-s_1|x|} - S_2 e^{-s_2|x|}, \quad S_1>S_2>0, \quad s_1>s_2>0,
  \end{equation}
  and  the periodically modulated function
  \begin{equation}\label{eq:exp-osc}
    \omega(x)= e^{-b|x|}(b \sin (|x|))+\cos(x)), \quad b>0,
  \end{equation}
 see Fig.\ref{Fig:omega}.
 In the paper we impose the following assumptions on $\omega.$
\begin{assumption} \label{as:A}
The connectivity function $\omega$ satisfies the following conditions.
\begin{itemize}
\item[(i)] $\omega(x)=\omega(-x)$\\
\item[(ii)] $\omega(x) \to 0$ as $|x| \to \infty$ and $|\omega(x)| \le C(1+|x|)^{-1-\delta},$ $C,\delta=\const>0.$
\item[(iii)] $\omega \in C_b^{0,1}(\R) \cap L_1 (\R).$
\item[(iv)] $\int_{\R}\omega(x) dx=:h_0 > 0$.
\end{itemize}
\end{assumption}

One can easily check that the functions in \eqref{eq:exp} - \eqref{eq:exp-osc} satisfy Assumption \ref{as:A} and decrease exponentially fast as $|x|\to 0.$
\begin{figure}[H]
  \centering
  \includegraphics[width=7cm]{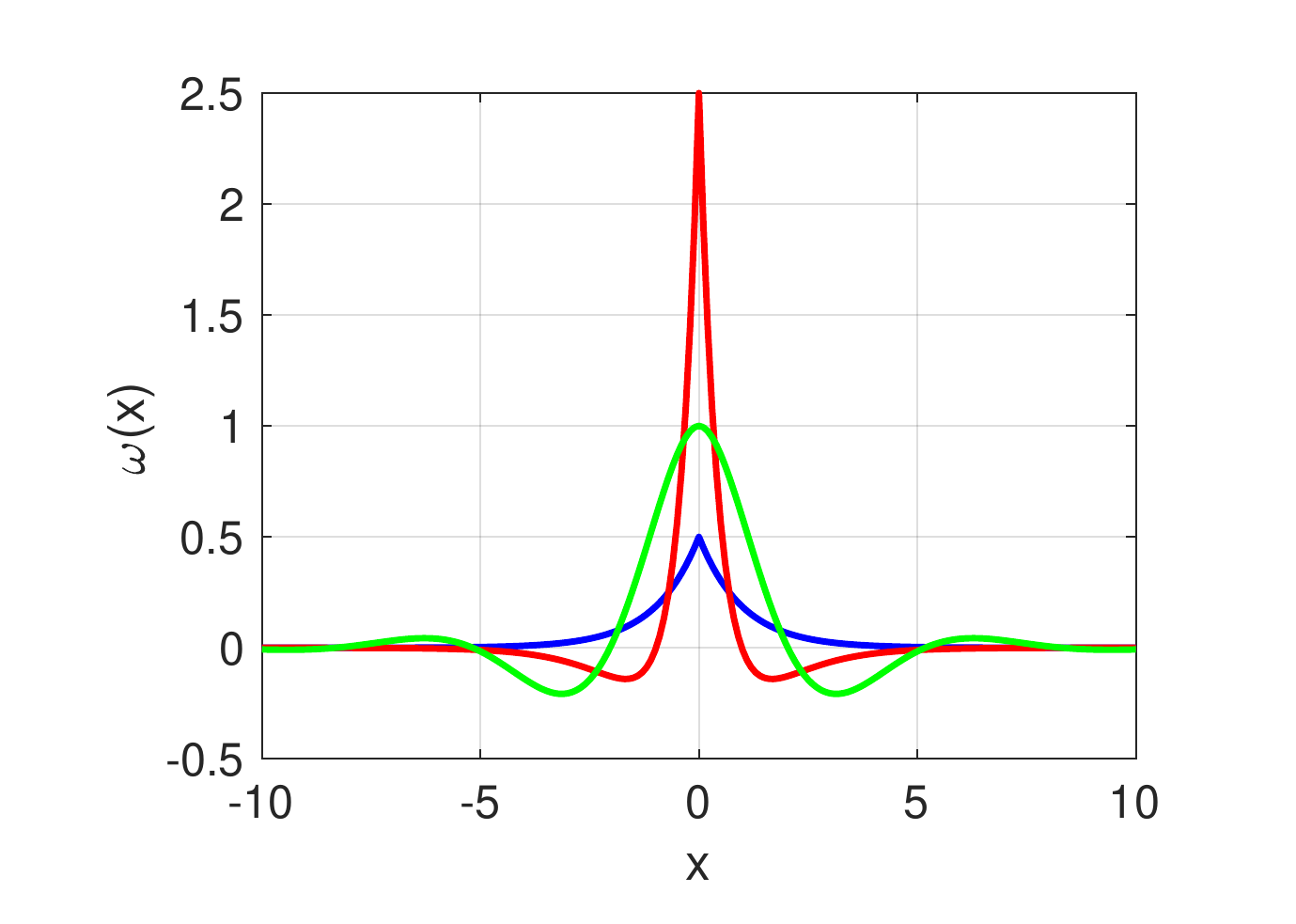}
  \caption{Connectivity functions $\omega(x)$ given by \eqref{eq:exp} with $S=0.5$, $s=1$ (blue curve), \eqref{eq:exp-exp} with $S_1=4$, $s_1=2$, $S_2=1.5$, $s_2=1$ (red curve), and \eqref{eq:exp-osc} with $b=0.5$ (green curve).}\label{Fig:omega}
\end{figure}

The firing rate function $f:\R \to [0,1]$ is usually given as a smooth function of sigmoid shape.  It is often represented by a parameterized function $f(u)=S(\beta u)$, see e.g. \cite{coombes2010neural,Krisner2007PeriodicSolutions,Laing2002MultipleBumps,oleynik2016spatially} where $S(\beta u)$ approaches (in some specific way) the unit step function $H(u)$ as $\beta\to \infty.$ One example of $f(u)$ is
\begin{equation}\label{Eq:FiringRateFun}
f(u)=S(\beta u), \quad
S(u)=\frac{u^p}{u^p+1}H(u), \quad p>1,
\end{equation}
see Fig. \ref{Fig:FiringFun}.

\begin{figure}[H]
  \centering
  \includegraphics[width=7cm]{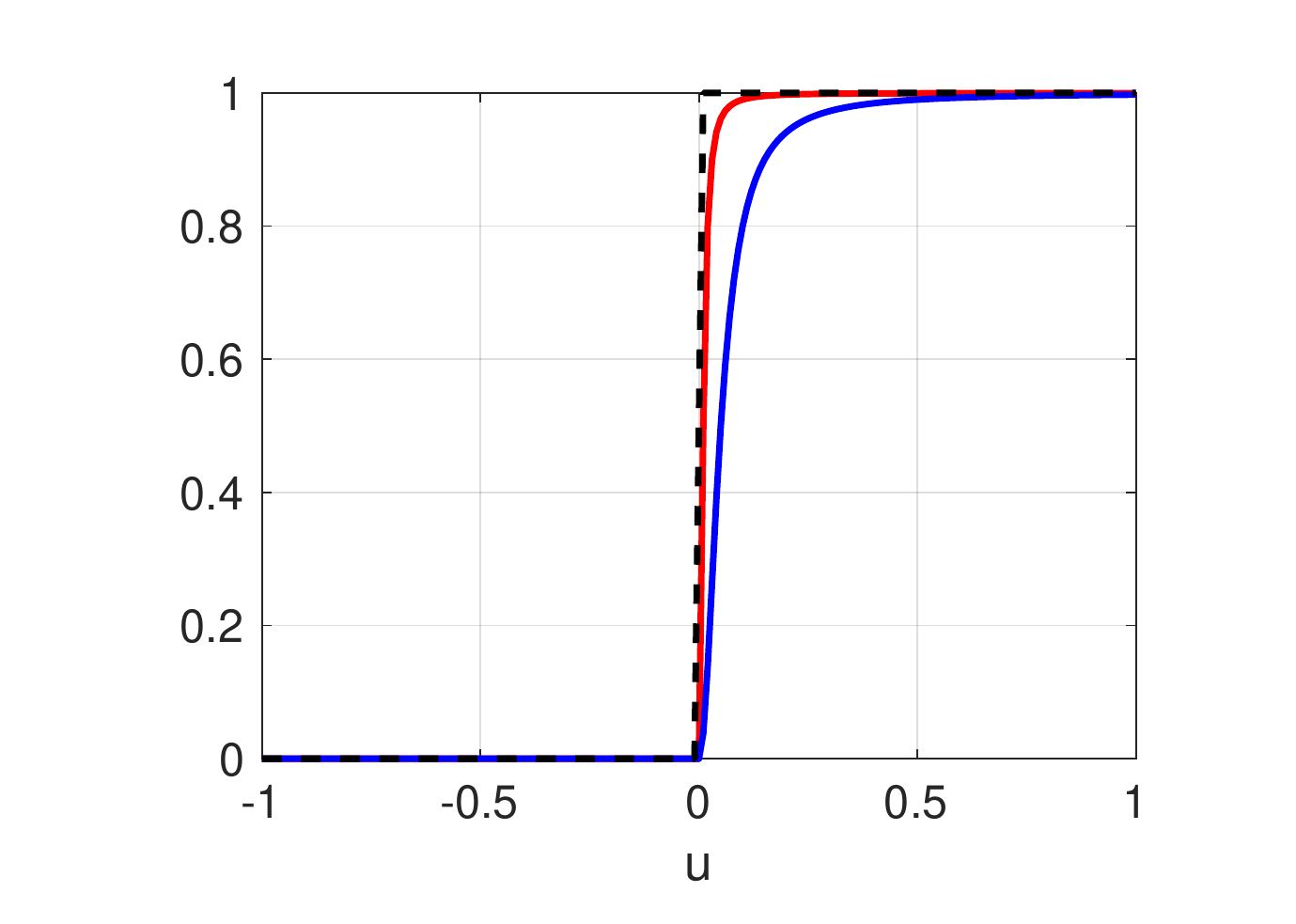}
  \caption{Functions $f(u)=S(\beta u)$, $S$ is as in \eqref{Eq:FiringRateFun}, $p=2$, with $\beta=100$ (red curve) and $\beta=20$ (blue curve) and the unit step function $H(u)$(black dashed line).}\label{Fig:FiringFun}
\end{figure}

Already in his seminal paper Amari conjectured that there must exist periodic stationary solutions in the absence of bump solutions and constant solutions. He however did not pursue a further study of periodic solutions. Of course the absence of other types of stationary solutions is not necessary for periodic solutions to exist. In fact, as in some cases bump solutions can be viewed as a homoclinic orbits of an ordinary differential equation (ODE) with $\omega$ being the Green's function of its linear part, see e.g. \cite{ELVIN2010537}, periodic solutions are very likely to co-exist with the bump solution, see \cite{Shilnikov1965,Shilnikov1970} (in Russian) and \cite{Glendinning1984}, and \cite{DEVANEY1976431}.
In \cite{Bressloff2012, WYLLER200775,ermentrout1998neural}
it has been shown numerically that stable periodic solutions of the two population version of the Amari model exist and emerge from homogeneous solutions via Turing-Hopf bifurcation.
To the best of our knowledge there are no theoretical studies that address the existence of periodic solutions to \eqref{Amari_model} except \cite{Krisner2007PeriodicSolutions}, and no studies on the stability of these solutions.

Krisner in \cite{Krisner2007PeriodicSolutions} studied the existence of periodic solutions to \eqref{Amari_model} with $\omega$ given by \eqref{eq:exp-osc}. In this case, any bounded solution of
\eqref{eq:u=Hu} is a solution of a forth order ODE, see \cite{krisner2004ODE} and can be studied by methods developed for ODEs. Given $f$ as a smooth steep sigmoid function it has been shown that \eqref{Amari_model} has at least two periodic solution under some assumptions on the parameters.  The analysis is however rather cumbersome and is not applicable for general types of $\omega$ as, e.g., \eqref{eq:exp} and \eqref{eq:exp-exp}.
Thus, we would like to proceed in a different way and address the existence of periodic solutions
without reformulating \eqref{eq:u=Hu} as ODEs.

When $f$ is approximated by a step function $H$ it is possible to obtain analytical expressions for some types of stationary solutions and travelling waves, see e.g. chapter 3 in \cite{coombes2014neural} and \cite{CoombesOwen2004}. However, the operator $\cH$ in this case is discontinuous in any classical functional space and thus, classical functional analysis tools such as e.g. generalized Picard-Lindelof theorem or Hartman-Grobman theorem, usually fail. However, many papers still conveniently assume that the model is well-posed on the considered spaces and study the stability of solutions  by first approximation, see \cite{Amari,CoombesOwen2004,BWE2005} and
\cite{OleynikWyllerTetzlaffEinevoll2011}
 just to name a few.


The natural way to overcome this problem is to study the model \eqref{Amari_model} with $f(u)=S(\beta u)$ and only use  the limiting case $f=H$ to gain the knowledge about the existence and stability of solutions for large values of $\beta.$  The approximation of $f=H$ with $f=S(\beta u)$ then must be properly justified.
This has been successfully done for bumps solutions in \cite{oleynik2016spatially,BPW2016a} and \cite{OKS(Unpublished)}.

Our overall aim is to generalize the analysis in the mentioned papers for the periodic 1-bump solutions. In this paper we take the first crucial step towards this direction and study the limiting case $f=H$.

The paper organized as follows:
Section \ref{Sec:Notations} contains the notation we use. In Section \ref{Sec:Existence} we give the definition of $1$-bump periodic solutions and study their existence  by means of the Amari approach.
We formulate necessary and sufficient conditions for the existence of 1-bump periodic solutions and show that for $\omega\geq 0$ there is a unique solution for each period $T>0$.  Section \ref{Sec:Stability} is dedicated to the linear stability of $1$-bump periodic solutions.  We show that the spectrum of the corresponding linearized operator $\cH$ can be obtained as the spectrum of an infinite block Laurent (or bi-infinite block Toeplitz) operator.
We give an analytical expression for the spectrum in terms of the symbol of the Laurent operator and discuss ways how it can be calculated numerically. We prove that the spectrum consists only of eigenvalues and give a formula for calculating eigenfunctions. The results in Section \ref{Sec:Existence} and Section \ref{Sec:Stability} are illustrated for the case of $\omega$ given by \eqref{eq:exp} and \eqref{eq:exp-exp}. Section \ref{Sec:Conclusions} contains conclusions and remarks.


\section{Notations} \label{Sec:Notations}
For the convenience of the readers we give a list of functional spaces and specify other notations we use.

$\bullet$ $S^1$ is the unit circle.

$\bullet$ $\ii$ is the imaginary unit.

$\bullet$ $\overline{z}$ is the complex conjugate of $z\in \C.$

$\bullet$ $cl(\Omega)$ is the closure of a set $\Omega.$

$\bullet$ $\| \cdot\|_{op}$ denotes the operator norm.

$\bullet$ $C^{0,1}_{b}(\R)$ is the space of all Lipschitz continuous bounded functions on $\R$ equipped with the norm
$$\|f\|_{C^{0,1}_b(\R)}=\sup\limits_{x,y\in \R} \frac{|f(x)-f(y)|}{|x-y|}, \quad x\ne y.$$

$\bullet$ $\ell_p^{m}(\Z)$ is the Banach space of sequences with entries from $\R^m$ where $1\leq p \leq \infty$ and $m\in\N$ equipped with the norm
\[
\|x\|_{\ell_{p}^{m}(\Z)}=\left(\sum\limits_{k\in \Z} \|x_k\|^p\right)^{1/p}, \quad 1\leq p<\infty
\]
and
\[
\|x\|_{\ell_{\infty}^{m}(\Z)}=\sup \limits_{k\in\Z}  \|x_k\|, \quad p=\infty,
\]

where $\|\cdot\|$ is any norm in $\R^m.$

$\bullet$ $\ell_{p}^{m\times m}(\Z)$ is the space of sequences where components are matrices $m$ by $m$ on $\R$, equipped with the norm

\[
\|A\|_{\ell_{p}^{m\times m}(\Z)}=\left(\sum\limits_{k\in \Z} \|A_k\|_{op}^p\right)^{1/p}, \quad 1\leq p<\infty
\]
and
\[
\|A\|_{\ell_{\infty}^{m\times m}(\Z)}=\sup \limits_{k\in\Z}  \|A_k\|_{op}, \quad p=\infty.
\]

$\bullet$ $\cW(S^1)$ is the Wiener space of functions defined on $S^1$ (continuous functions whose Fourier coefficients is an $\ell_1(\Z)$ sequence) equipped with the norm
$$\|f\|_{\cW}=\sum\limits_{k\in \Z} |a_k|,$$
where $a_k$ are the Fourier coefficients of $f$.

$\bullet$ $\cW^{m\times m}(S^1)$ is the Wiener space of $m$ by $m$ matrix functions defined on $S^1$ equipped with the norm
$$\|\varphi\|_{op}=\max\limits_{1\leq i\leq m} \sum\limits_{j=1}^{m} \|\varphi_{ij}\|_{\cW}.$$

$\bullet$ $\sigma(L)$ is the spectrum of the linear operator $L$.

$\bullet$ $\rho(L)$ is the resolvent of the linear operator $L$.


\section{Existence of 1-bump periodic solutions}\label{Sec:Existence}
We consider a particular type of periodic solution that we call a 1-bump periodic solution, due to its shape on one considered period, that is, $u(x)\geq h$ on a (connected) interval and $u(x)< h$ otherwise. Krisner in \cite{Krisner2007PeriodicSolutions} proved the existence of the same type of periodic solutions for $\omega$ given in \eqref{eq:exp-osc}. Below we define the class of periodic functions that we intent to consider.

\begin{definition}\label{def:1-bump functions}
Let $h\in \R$, and $u(x)$ be a continuous periodic function defined on $\R$ with a period $T>0.$ We say that $u(x)$ is a \emph{1-bump periodic} function with period $T$, or simply 1-bump periodic, if there is a translation of $u(x),$ say $p(x)=u(x-c),$ with the following properties:
\begin{itemize}
\item[(i)] It has two symmetric intersection, say at $x=\pm a$ with the straight line $y=h,$ i.e.,  $p(\pm a)=h$.\\
\item[(ii)]It lies above $y=h$ for all $x\in (-a,a)$  and below for  $x\in [-T/2, T/2]\setminus[-a,a],$ i.e.,
$p(x)>h$ for  $x\in (-a,a)$ and $p(x)<h$ for $x\in [-T/2, T/2]\setminus[-a,a].$
\end{itemize}

If in addition $u\in C_b^1(\R)$ with $p'(\pm a)\neq 0$ then we say that $u(x)$ is regular.

\end{definition}

We illustrate the definition above in Fig.\ref{Fig:BumpsORnot}

\begin{figure}[H]
  \centering
  \includegraphics[width=7cm]{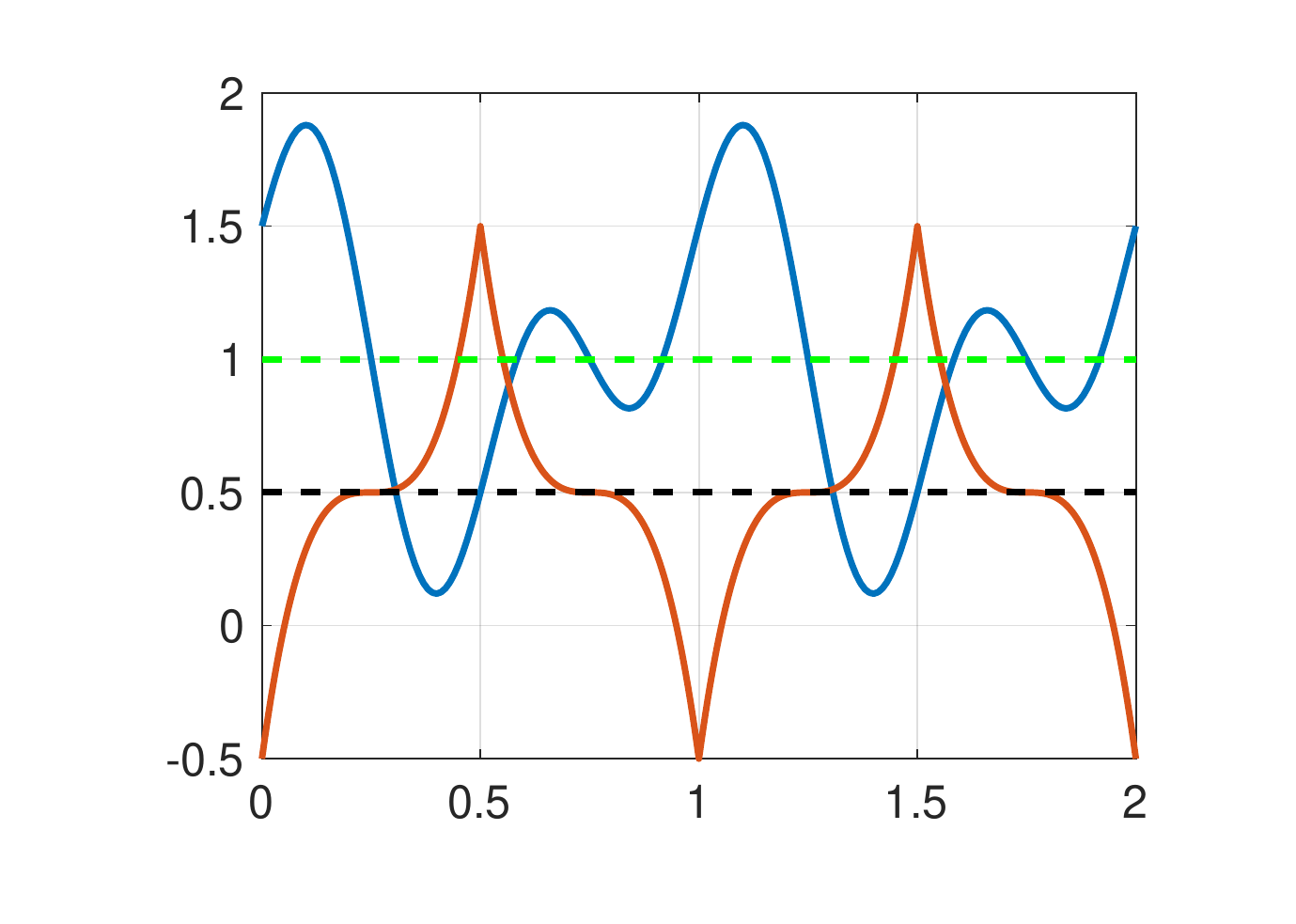}
      \caption{ The function corresponding to the blue curve is the regular 1-bump periodic if $h=0.5$ and is not a 1-bump periodic if $h=1.$ The red curve corresponds to the 1-bump periodic function for both $h=0.5$ and $h=1.$  Here we assume that the functions given by blue and red curves both have period $T=1$.}\label{Fig:BumpsORnot}
\end{figure}
A small perturbation  of a regular 1-bump periodic function in $C_b^{0,1}(\R)$ does not destroy the 1-bump structure of the function. We formulate it as the lemma below.

\begin{lemma}\label{lemma:u_p+v}
 Let $h \in \R$ and $T>0$ be fixed and $p(x)$ be a regular 1-bump periodic function with $p(\pm a)=h,$ $0<a <T/2$. Then there exists $\varepsilon>0$ such that any $v \in B_\varepsilon(u_p):=\{v| : \|v-u_p\|_{C^{0,1}}<\varepsilon \}$ has exactly two intersection with the straight line $y=h$ on each of the intervals $(-T/2+kT, T/2+kT),$ $k \in \Z,$ i.e., there are $a_{\pm}(\varepsilon,k) \in (-T/2+kT, T/2+kT)$   such that $v(a_{\pm}(\varepsilon, k))=h$. Moreover $a_{\pm}(\varepsilon,k) \to \pm a+kT$ as $\varepsilon \to 0$ and
 $v(x)>h$ for $x \in (a_{-}(\varepsilon,k), a_+(\varepsilon,k))$ and $v(x)<h$ for $x \in [-T/2+kT, T/2+kT]\setminus(a_{-}(\varepsilon,k), a_+(\varepsilon,k)).$
 \end{lemma}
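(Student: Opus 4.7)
The plan is to argue locally: near each zero $\pm a + kT$ of $p - h$ show that $v - h$ inherits a unique simple zero, while away from these points the sign of $v - h$ agrees with that of $p - h$. Translation invariance of $\|\cdot\|_{C^{0,1}}$ together with $T$-periodicity of $p$ then reduces matters to the fundamental period $[-T/2, T/2]$, with the general $k \in \Z$ obtained by the obvious shift.

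For the local setup, the sign pattern of Definition \ref{def:1-bump functions} combined with $p'(\pm a) \neq 0$ forces $p'(-a) > 0$ and $p'(a) < 0$. Set $m := \min\{p'(-a), -p'(a)\} > 0$. Using $p \in C_b^1(\R)$, pick $\delta \in (0, (T/2 - a)/2)$ such that $p'(x) > m/2$ on $I_- := [-a - \delta, -a + \delta]$ and $p'(x) < -m/2$ on $I_+ := [a - \delta, a + \delta]$. On the compact set $K := [-T/2, T/2] \setminus (I_- \cup I_+)$ the continuous function $p - h$ does not vanish, so $\eta := \min_K |p - h| > 0$. Fix $\varepsilon < \tfrac{1}{2}\min\{\eta, m/2\}$. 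For any $v$ with $\|v - u_p\|_{C^{0,1}} < \varepsilon$ one has $|v(x) - h| \ge \eta - \varepsilon > 0$ on $K$ with $v - h$ of the same sign as $p - h$; in particular $v < h$ on $K$.

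On $I_-$, for $x_1 < x_2$, the mean value theorem applied to $p$ (using $p' \ge m/2$) combined with the Lipschitz bound on $v - p$ gives
\begin{equation*}
v(x_2) - v(x_1) \;\ge\; (m/2)(x_2 - x_1) - \varepsilon (x_2 - x_1) \;>\; 0,
\end{equation*}
so $v$ is strictly increasing on $I_-$. Together with $v(-a - \delta) < h < v(-a + \delta)$ (from the previous step), this yields a \emph{unique} zero $a_-(\varepsilon, 0) \in I_-$. The symmetric argument on $I_+$ (with $v$ strictly decreasing there) furnishes a unique zero $a_+(\varepsilon, 0) \in I_+$. Since the construction works for any sufficiently small $\delta$ with a correspondingly smaller $\varepsilon(\delta)$, we obtain $a_\pm(\varepsilon, 0) \to \pm a$ as $\varepsilon \to 0$. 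The case of general $k$ follows by applying the identical argument on $[-T/2 + kT, T/2 + kT]$, which is valid because $p$ is $T$-periodic and $\|\cdot\|_{C^{0,1}}$ is translation invariant.

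The main obstacle is that $v$ is only assumed Lipschitz, not $C^1$, so one cannot invoke a fixed sign of $v'$ to get uniqueness of the zero near $\pm a$. The secant estimate in Step~3 circumvents this: closeness in the Lipschitz seminorm to a $C^1$ function $p$ with $|p'| \ge m/2$ on $I_\pm$ is enough to force strict monotonicity of $v$ there, and hence uniqueness of the zero. This is the conceptual content of the lemma; everything else is bookkeeping with periodicity.
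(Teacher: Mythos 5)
Your proof is essentially correct and, unlike the paper, self-contained: the paper disposes of this lemma with a one-line reference to Lemma 3.6 of \cite{OPW2012}, so you have supplied the argument that the authors only cite. Your key device --- replacing ``$v'$ has a fixed sign'' (unavailable for a merely Lipschitz $v$) by the secant estimate $v(x_2)-v(x_1)\ge (m/2-\varepsilon)(x_2-x_1)$ on $I_\pm$ --- is exactly the right way to get uniqueness of the perturbed zero, and the decomposition into the monotonicity intervals $I_\pm$ and the compact complement $K$ where $|p-h|\ge\eta$ is the standard and correct structure for this kind of perturbation lemma.

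Two small repairs. First, the sentence ``in particular $v<h$ on $K$'' is wrong and contradicts the inequality $v(-a+\delta)>h$ that you use immediately afterwards: $K$ has three components, and on the middle one $(-a+\delta,\,a-\delta)$ you have $p>h$, hence $v>h$ there; the correct (and the only needed) statement is the one you also wrote, namely that $v-h$ has the same sign as $p-h$ throughout $K$. This is what yields $v>h$ between the two zeros and $v<h$ outside them, completing the sign pattern claimed in the lemma. Second, you should also require $\delta<a$ so that $I_-$ and $I_+$ are disjoint and contained in $(-T/2,T/2)$; your constraint $\delta<(T/2-a)/2$ alone does not guarantee this when $a$ is small. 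Finally, note that you implicitly use both $\|v-p\|_\infty<\varepsilon$ and $\mathrm{Lip}(v-p)<\varepsilon$; this is fine for the usual norm on $C_b^{0,1}(\R)$, but the paper's stated ``norm'' is only the Lipschitz seminorm, under which the lemma itself would fail (add a constant to $v$), so your reading of the hypothesis is the correct one.
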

\begin{proof}
The proof goes in line with the proof of Lemma 3.6 in \cite{OPW2012}.
\end{proof}

\begin{definition}
A (regular) 1-bump periodic function which is a solution to \eqref{eq:u=Hu} we call a (regular) 1-bump periodic solution to \eqref{Amari_model}.
\end{definition}

We notice that any solution to \eqref{eq:u=Hu} is translation invariant, i.e., if $u(x)$ is a solution to \eqref{eq:u=Hu} then so is $u(x-c)$ for any $c\in \R.$ Thus, without loss of generality we can simply consider $p(x)=u(x)$ in (ii) of Definition \ref{def:1-bump functions}.

Given that $f$ is a unit step function, a 1-bump periodic solution can be expressed as
\begin{equation}
\label{eq:u=int}
u_p(x)=\sum_{k\in \Z}\int_{-a+kT}^{a+kT} \omega(x-y)dy=\sum_{k\in \Z}\int_{-a}^{a} \omega(x-y+Tk)dy
\end{equation}
where $a \in (0, T/2)$ is the root of $u_p(a)=h.$

We notice here that the critical cases $a=0$ and $a=T/2$ correspond to the constant solutions
$u_p(x)=0$ and $u_p(x)=h_0$ where $h_0= \int_\R \omega(y)dy >0.$ This serves as a motivation to consider $h\in (0, h_0).$ Further we will show that for some connectivity functions the condition $h\in (0,h_0)$ is sufficient for the existence of a 1-bump periodic solution.

It is easy to see that the function  in \eqref{eq:u=int} is periodic. Indeed,
\begin{equation*}
u_p(x+T)=\sum_{k\in \Z}\int_{-a}^{a} \omega(x-y+T(k+1))dy=u_p(x).
\end{equation*}
Moreover, due to Assumption \ref{as:A} (i), it is even
\begin{equation*}
\begin{split}
u_p(-x) &= \sum_{k\in\Z}\int_{-a}^a\omega(-x-y+Tk) dy = \sum_{k\in\Z}\int_{-a}^a\omega(-x+y+Tk) dy\\
&= \sum_{k\in\Z}\int_{-a}^a\omega(x-y-Tk) dy = u_p(x).
\end{split}
\end{equation*}

From Assumption \ref{as:A}(ii) we obtain the following
estimate
\begin{equation*}
\max \limits_{\begin{array}{c} x\in [-T/2,T/2]\\ y\in [-a,a]\end{array}} |\omega(x-y+Tk)| \leq C \alpha_k,\quad k\in\Z,
\end{equation*}
where
\begin{equation*}
\alpha_k=\begin{cases}
1, & k=0,\\
(1+T|k|-a-T)^{-1-\delta}, & |k|\geq 1.
\end{cases}
\end{equation*}

Since $\sum\limits_{k\in\Z} \alpha_k$ converges, the series $\sum\limits_{k\in \N}\omega(x+Tk)$ converges absolutely and uniformly on $[-a-T/2,a+T/2].$ Due to periodicity of this series, it converges absolutely and uniformly on any bounded interval to an even periodic function
\begin{equation*}
\omega_p(x;T):=\sum_{k\in\Z} \omega(x-Tk)
\end{equation*}
that has the antiderivative
\begin{equation}
W_p(x;T):=\int_0^x \omega_p(y;T)dy= \sum_{k\in \Z}\int_0^x \omega(y-kT).
 \end{equation}

Using the notations above we obtain
\begin{equation}
\label{eq:u=int2}
u_p(x)=\int\limits_{-a}^{a}\omega_p(x-y;T)dy
\end{equation}
or, equivalently,
\begin{equation}
\label{eq:u=int3}
u_p(x)=W_p(x+a;T)-W_p(x-a;T)
\end{equation}
where $a$ is then given as
\begin{equation}
\label{eq:a}
W_p(2a;T)=h.
\end{equation}

Thus, the procedure of finding 1-bump periodic solutions becomes analogous to the one of finding 1-bump solutions proposed by Amari in \cite{Amari} where instead of  $\omega$ and $W$ we use $\omega_p$ and $W_p,$ respectively. Namely, first we find $a$ from \eqref{eq:a}. Then we verify that the function in \eqref{eq:u=int3} is indeed a 1-bump periodic function. As the function $u_p$ is even and periodic, it is enough to consider the interval $[0,T/2]$.
We summarize this in a theorem.

\begin{theorem} \label{th:NesSufCond}
The function $u_p(x)$ given by \eqref{eq:u=int3} is a periodic solution to \eqref{Amari_model} if and only if the following three conditions hold
\begin{itemize}
\item[(1)] $u_p(a)=h$, or equivalently, $W_p(2a;T)=h,$ for some $0<a<T/2,$
\item[(2)] $u_p(x)>h$ for all $x\in(0,a)$,
\item[(3)] $u_p(x)< h$ for all $x\in (a,T/2]$.
\end{itemize}
\end{theorem}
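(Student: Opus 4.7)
The key structural observation is that formula \eqref{eq:u=int3} can be rewritten, via the chain \eqref{eq:u=int}--\eqref{eq:u=int2}, as $u_p(x)=\int_{A_a}\omega(x-y)\,dy$, where $A_a:=\bigcup_{k\in\Z}[-a+kT,a+kT]$. Thus $u_p$ is precisely $\cH$ applied to the characteristic function $\chi_{A_a}$, and $u_p$ is a stationary solution of \eqref{Amari_model} if and only if $H(u_p(\cdot)-h)=\chi_{A_a}$ almost everywhere. Since $u_p$ is continuous, this is equivalent to the superlevel set identity $\{y\in\R:u_p(y)\ge h\}=A_a$. Conditions (1)--(3) are the explicit form of this set identity adapted to the even, $T$-periodic structure of $u_p$.

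For the sufficiency direction I would assume (1)--(3). The paragraphs preceding the theorem already establish that $u_p$ is continuous, even, and $T$-periodic. Combining evenness with (1)--(3) gives $u_p(\pm a)=h$, $u_p>h$ on $(-a,a)$, and $u_p<h$ on $[-T/2,T/2]\setminus[-a,a]$; extending by $T$-periodicity yields $\{y:u_p(y)\ge h\}=A_a$ globally. Hence
\begin{equation*}
(\cH u_p)(x)=\int_\R\omega(x-y)H(u_p(y)-h)\,dy=\sum_{k\in\Z}\int_{-a+kT}^{a+kT}\omega(x-y)\,dy=u_p(x),
\end{equation*}
where the last equality is exactly the chain \eqref{eq:u=int}--\eqref{eq:u=int3} and the interchange of sum and integral is justified by the uniform convergence of $\omega_p$ established in the text. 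Together with the 1-bump structure guaranteed by (1)--(3) and evenness, this places $u_p$ in the class described by Definition~\ref{def:1-bump functions}, so $u_p$ is a 1-bump periodic solution.

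For the necessity direction I would suppose $u_p$ defined by \eqref{eq:u=int3} is a 1-bump periodic solution. Evenness of $u_p$ lets me take $c=0$ in Definition~\ref{def:1-bump functions}, producing some $a'\in(0,T/2)$ with $u_p(\pm a')=h$, $u_p>h$ on $(-a',a')$, and $u_p<h$ on $[-T/2,T/2]\setminus[-a',a']$. Since $u_p$ is a solution, integrating $\omega$ over $\{u_p\ge h\}=A_{a'}$ yields the alternative representation $u_p(x)=W_p(x+a';T)-W_p(x-a';T)$. Matching this against the defining formula \eqref{eq:u=int3} forces $a=a'$, and conditions (1)--(3) follow directly from the defining properties of $a'$.

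The main obstacle I anticipate is the final identification $a=a'$. The cleanest argument is to note that the derivation of \eqref{eq:u=int3} identifies the formula parameter $a$ with the half-width of the bump in the ansatz, making the identification essentially tautological under the hypothesis that $u_p$ is of the form \eqref{eq:u=int3}. If $a$ and $a'$ are treated as a priori independent, one may differentiate the identity $W_p(\cdot+a;T)-W_p(\cdot-a;T)=W_p(\cdot+a';T)-W_p(\cdot-a';T)$ and expand in Fourier series on $\R/T\Z$; the non-vanishing Fourier modes of $\omega_p$ then distinguish $a$ from $a'$ within $(0,T/2)$.
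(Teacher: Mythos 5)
Your proposal is correct and follows the same Amari-type construction that the paper uses implicitly: the theorem is stated there as a summary of the preceding derivation of \eqref{eq:u=int}--\eqref{eq:a} with no separate proof, and your argument (that $u_p=\cH\chi_{A_a}$ is a fixed point precisely when its superlevel set equals $A_a$, unpacked via evenness and $T$-periodicity into conditions (1)--(3)) is exactly what that summary leaves to the reader. One small refinement to your final step: for the identification $a=a'$ you should rely on the zeroth Fourier mode only --- integrating both representations over one period gives $2ah_0=2a'h_0$ with $h_0>0$ guaranteed by Assumption \ref{as:A}(iv) --- since the higher Fourier coefficients of $\omega_p$ are not assumed to be nonzero.
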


Similarly as for the bump solutions, it is not generally possible to verify the conditions of the theorem above without additional information about $\omega.$ However, for a particular choice of $\omega$ the verification procedure is rather simple.

Observe that from \eqref{eq:u=int3} $u_p \in C_b^1(\R).$ Then we calculate
\begin{equation} \label{eq:u'}
u'_p(x)= \omega_p(x+a;T)-\omega_p(x-a;T)
\end{equation}
and
\begin{equation} \label{eq:u'(a)}
|u'_p(a)|= \omega_p(0;T)-\omega_p(2a;T).
\end{equation}
Hence, if $u_p$ is a 1-bump periodic solution, $\omega_p(0;T)\geq\omega_p(2a;T)$ must be satisfied.
Then for $\omega\geq 0$ we can simplify conditions of Theorem \eqref{th:NesSufCond}.

\begin{lemma}\label{lem:AnnaVadim}
Let $T>0$ be arbitrary and $\omega$ satisfies Assumption \ref{as:A}. Then for any $0<h<h_0$ the equation $u_p(a)=h$ possesses at least one solution $a \in (0, T/2)$.
If $\omega \ge 0$ and can have only isolated zeros then such $a=a(T)$ is unique and the corresponding $u_p$ is a 1-bump regular periodic solution provided that $\omega_p(2a(T);T) < \omega_p(0;T)$.
\end{lemma}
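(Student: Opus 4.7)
The plan is to treat the three claims in order: existence of a root $a\in(0,T/2)$ of $u_p(a)=h$, its uniqueness, and verification that the resulting $u_p$ is a regular 1-bump periodic solution. Existence is obtained by the intermediate value theorem applied to $W_p(\,\cdot\,;T)$. Since $\omega_p(\,\cdot\,;T)$ is continuous and $T$-periodic (established before Theorem~\ref{th:NesSufCond}), its antiderivative is $C^1(\R)$ with $W_p(0;T)=0$ and $W_p(T;T)=\int_0^T\omega_p(y;T)\,dy=\int_\R\omega(y)\,dy=h_0$. For any $h\in(0,h_0)$ the IVT yields $b\in(0,T)$ with $W_p(b;T)=h$, and $a:=b/2\in(0,T/2)$ then satisfies $u_p(a)=W_p(2a;T)=h$ by \eqref{eq:u=int3}.

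For uniqueness under $\omega\ge 0$ with only isolated zeros, I would show that $W_p$ is strictly increasing on $[0,T]$. Since $W_p'=\omega_p\ge 0$, it suffices to exclude an interval on which $\omega_p$ vanishes; on such an interval every term of the nonnegative absolutely convergent series $\omega_p=\sum_{k\in\Z}\omega(\,\cdot\,-kT)$ would have to vanish, forcing $\omega\equiv 0$ on a nonempty open set and contradicting the isolated-zero hypothesis. Regularity of the corresponding $u_p$ is then immediate from \eqref{eq:u'(a)}: the standing hypothesis $\omega_p(2a;T)<\omega_p(0;T)$ gives $u_p'(a)<0$, and by evenness $u_p'(-a)=-u_p'(a)\ne 0$, so $u_p\in C_b^1(\R)$ meets the non-degeneracy clause in Definition~\ref{def:1-bump functions}.

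What remains is the 1-bump shape, i.e.\ conditions (2)--(3) of Theorem~\ref{th:NesSufCond}, which I would establish by proving that $u_p$ is strictly decreasing on $[0,T/2]$. Together with $u_p(a)=h$, evenness of $u_p$, and $T$-periodicity, this delivers $u_p>h$ on $(-a,a)$ and $u_p<h$ on $[-T/2,T/2]\setminus[-a,a]$ in one stroke.

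The main obstacle is precisely this last monotonicity step: the hypothesis $\omega_p(2a;T)<\omega_p(0;T)$ is only local information at $x=a$, whereas one needs the global estimate $u_p'(x)=\omega_p(x+a;T)-\omega_p(x-a;T)\le 0$ throughout the half-period. My intended route is to combine the boundary identities $u_p'(0)=u_p'(T/2)=0$ — forced by evenness of $u_p$ together with the symmetry $u_p(T/2+\,\cdot\,)=u_p(T/2-\,\cdot\,)$ coming from $T$-periodicity — with Rolle's theorem applied to $u_p-h$: any additional zero of $u_p-h$ in $(0,T/2)\setminus\{a\}$ would create extra zeros of $u_p'$ that, in view of the strict monotonicity of $W_p$ already used for uniqueness, cannot coexist with the strict inequality $u_p'(a)<0$.
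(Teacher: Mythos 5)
Your treatment of the first three assertions coincides with the paper's argument and is correct: existence comes from the intermediate value theorem applied to $W_p(\cdot\,;T)$ between $W_p(0;T)=0$ and $W_p(T;T)=h_0$; uniqueness comes from strict monotonicity of $W_p$ on $[0,T]$ (and you rightly fill in the detail that $\omega_p$ cannot vanish on an interval, since every term of the nonnegative series $\sum_{k\in\Z}\omega(\cdot-kT)$ would then vanish there, contradicting the isolated-zero hypothesis); regularity follows from \eqref{eq:u'(a)} and the proviso $\omega_p(2a;T)<\omega_p(0;T)$. Up to this point you are, if anything, more careful than the paper.

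The gap is exactly where you suspect it, and your proposed repair does not close it. Strict monotonicity of $W_p$ is a statement about the \emph{sign} of $\omega_p$; it gives no control over the sign of the difference $u_p'(x)=\omega_p(x+a;T)-\omega_p(x-a;T)$, because $\omega_p\ge 0$ with only isolated zeros is entirely compatible with $\omega_p$ failing to be monotone on $[0,T/2]$. Hence (i) the global decrease of $u_p$ on $[0,T/2]$ that you aim for is not a consequence of the hypotheses, and (ii) the Rolle step only produces a zero of $u_p'$ between $a$ and a putative second level crossing, which contradicts nothing: $u_p'(a)<0$ is purely local and coexists with later zeros and sign changes of $u_p'$. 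Concretely, if $\omega_p$ has a sufficiently pronounced secondary peak at some $s_0\in(2a,T/2)$ while still satisfying $\omega_p(2a;T)<\omega_p(0;T)$, then $u_p(x)=\int_{x-a}^{x+a}\omega_p(s;T)\,ds$ rises above $h$ near $x=s_0$ and conditions (2)--(3) of Theorem~\ref{th:NesSufCond} fail, so no argument using only uniqueness of $a$ and the proviso can succeed. For what it is worth, the paper itself disposes of this step in one sentence (``the final statement follows from \eqref{eq:u'(a)} and uniqueness of $a$'') without supplying the missing reasoning, so you have correctly isolated the only nontrivial point; but to actually finish one needs an additional input such as monotonicity of $\omega_p$ on $[0,T/2]$ (which does hold, e.g., for the kernel \eqref{eq:exp} and does yield $u_p'\le 0$ on $[0,T/2]$ by the distance-to-$T\Z$ comparison of $x+a$ and $x-a$), rather than the monotonicity of $W_p$.
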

\begin{proof}
Since the function $W_p(x;T)$ is continuous and $W_p(0;T)=0$ and $W_p(T;T)=h_0>0,$ there is at least one solution to the equation $W_p(2a;T)=h$ with $0<a<T/2$.

Assume now that $\omega\geq 0$ and does not have non isolated zeros.
Then $W_p(x;T)$ is strictly monotone increasing on $[0, T/2]$. Indeed,
\begin{equation*}
\frac{d}{dx} W_p(x;T)= \omega_p(x;T)=\sum_{k\in\Z} \omega(x+Tk)\geq 0
\end{equation*}
and may have only isolated zeros. This implies the uniqueness of $a$ as a function of $T.$
The final statement follows from \eqref{eq:u'(a)} and uniqueness of $a.$
\end{proof}

For more general function $\omega$ number of 1-bump periodic solution may vary with the period. In the next section we give several examples of $\omega,$ $T$ and $h$ for which the solutions do not exists, exists and are unique or non-unique.

\subsection{Examples} \label{Sec:Existence:Ex}

We consider two examples of the connectivity functions given in \eqref{eq:exp} and \eqref{eq:exp-exp} where most of the calculations can be done analytically.

Indeed, for $\omega$ given by  \eqref{eq:exp} we get $h_0= 2S/s$,
\[
\omega_p(x;T)=S\psi(x \bmod T; s) , \,  \mbox{ and } \,
W_p(x)=\frac{2S}{s}\left\lfloor \frac{x}{T} \right\rfloor + S\Psi(x \bmod T; s)
\]
where
\begin{equation} \label{eq:phiPhi:1}
\psi(x;s)=\frac{\exp(-sx)+\exp(-s(T-x))}{1-\exp(-sT)}
\end{equation}
\begin{equation}\label{eq:phiPhi:2}
\Psi(x;s)= \frac{\exp(s(x-T))-\exp(-sx)-\exp(-sT)+1}{s (1-\exp(-sT))},
\end{equation}
 see Fig.\ref{Fig:wpWp1andBump}(a).

From Lemma \ref{lem:AnnaVadim} the equation $W_p(2a;T)=h,$ $h\in (0, h_0)$ possesses a unique solution $0<a(T)<T/2.$  Moreover, $\omega_p(0;T)=2S/(1-\exp(-sT))$ and
$\omega_p(2a;T)=(\exp(-2sa)+\exp(-s(T-2a)))/(1-\exp(-sT))$ and thus, $\omega_p(2a;T)<\omega_p(0;T)$  for any $T>0$ which implies that $u_p(x)$ is a 1-bump regular periodic solution, see Fig.3.

\begin{figure}[H]
  \centering
 
\subfigure[]{\includegraphics[width=7cm]{{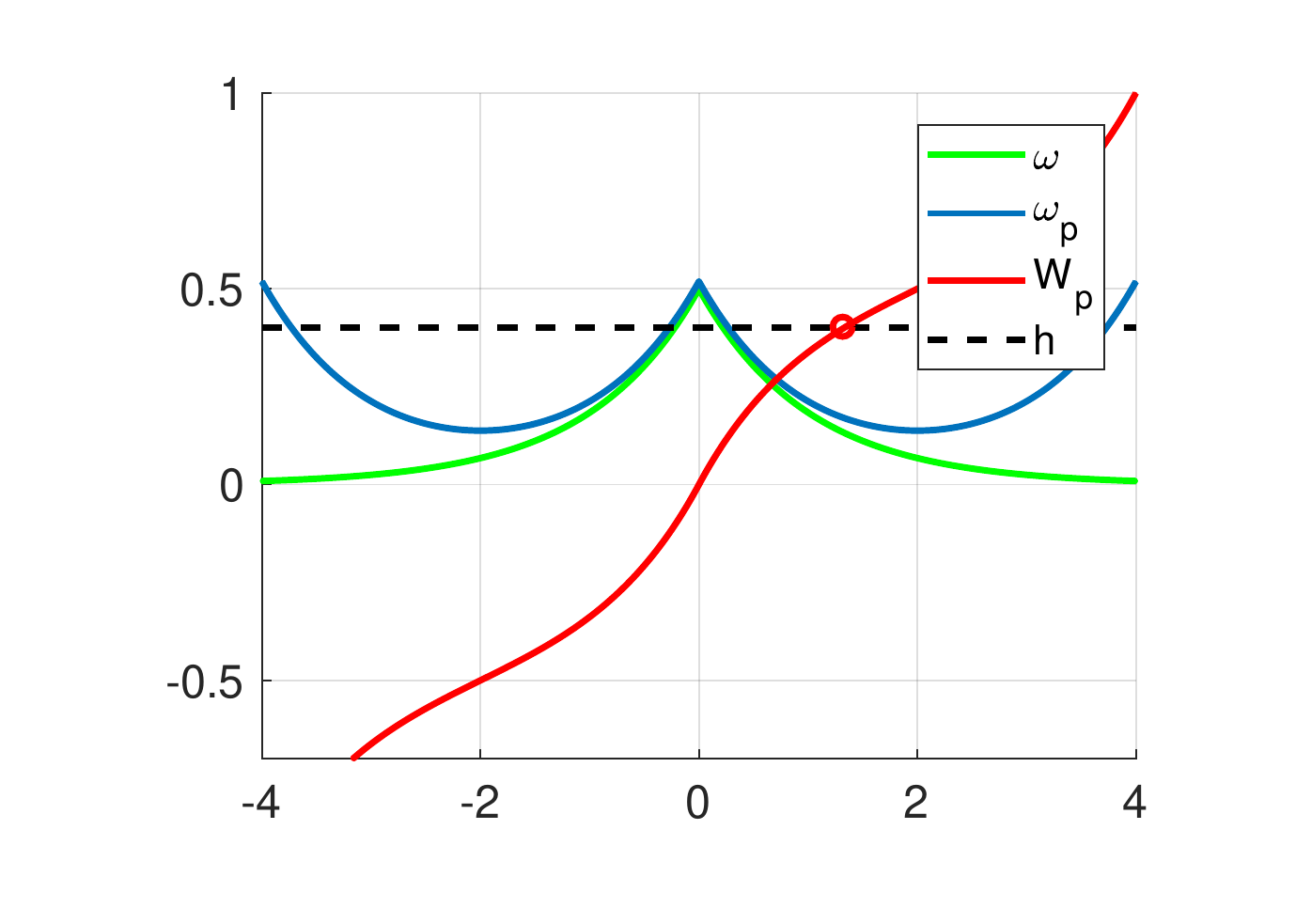}}}
\subfigure[]{\includegraphics[width=7cm]{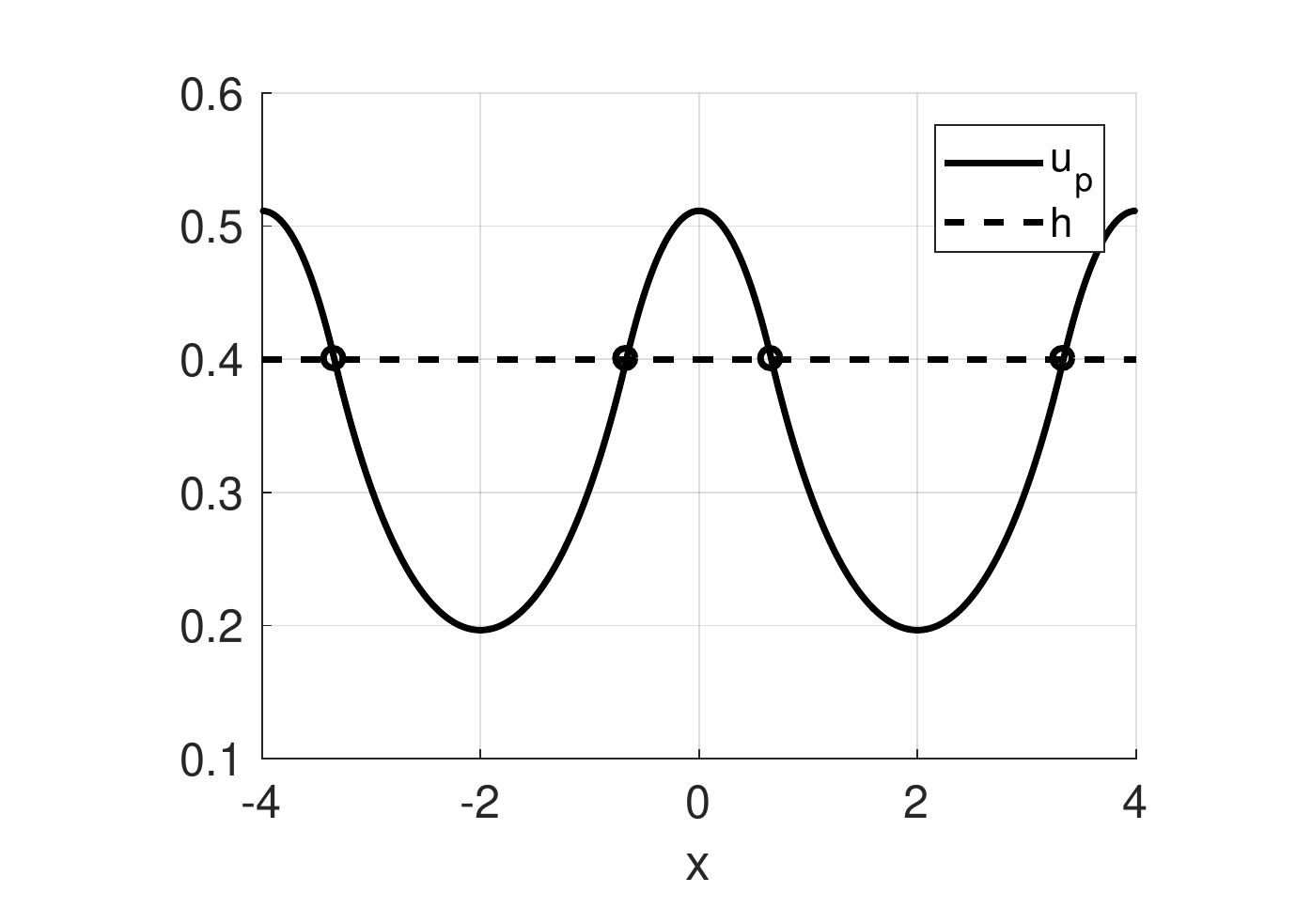}}
  \caption{(a)The function $\omega$ given in \eqref{eq:exp} with $S=0.5$, $s=1$ and the corresponding $\omega_p$ and $W_p$ with $T=4$. The intersection point corresponds to $a=0.6633$ (rounded up to 4 decimals) and $h=0.4.$ (b) 1-periodic bump solution \eqref{eq:u=int3} with $W_p$ as in (a).}\label{Fig:wpWp1andBump}
\end{figure}

For $\omega$ given by \eqref{eq:exp-exp} we find $h_0=2(S_1/s_1-S_2/s_2),$
\begin{equation} \label{eq:wp:2}
\omega_p(x;T)=S_1\psi(x \bmod T; s_1)-S_2 \psi( x\bmod T; s_2)
\end{equation}
 and
\begin{equation} \label{eq:Wp:2}
W_p(x;T)=\left(\frac{2S_1}{s_1}- \frac{2S_2}{s_2}\right) \left\lfloor \frac{x}{T} \right\rfloor + S_1\Psi(x \bmod T; s_1) -S_2\Psi(x \bmod T; s_2)
\end{equation}
 with $\psi$ and $\Psi$ given as in \eqref{eq:phiPhi:1}-\eqref{eq:phiPhi:2}, see Fig. \ref{Fig:wpWp2}.
\begin{figure}[H]
  \centering
  \includegraphics[width=7cm]{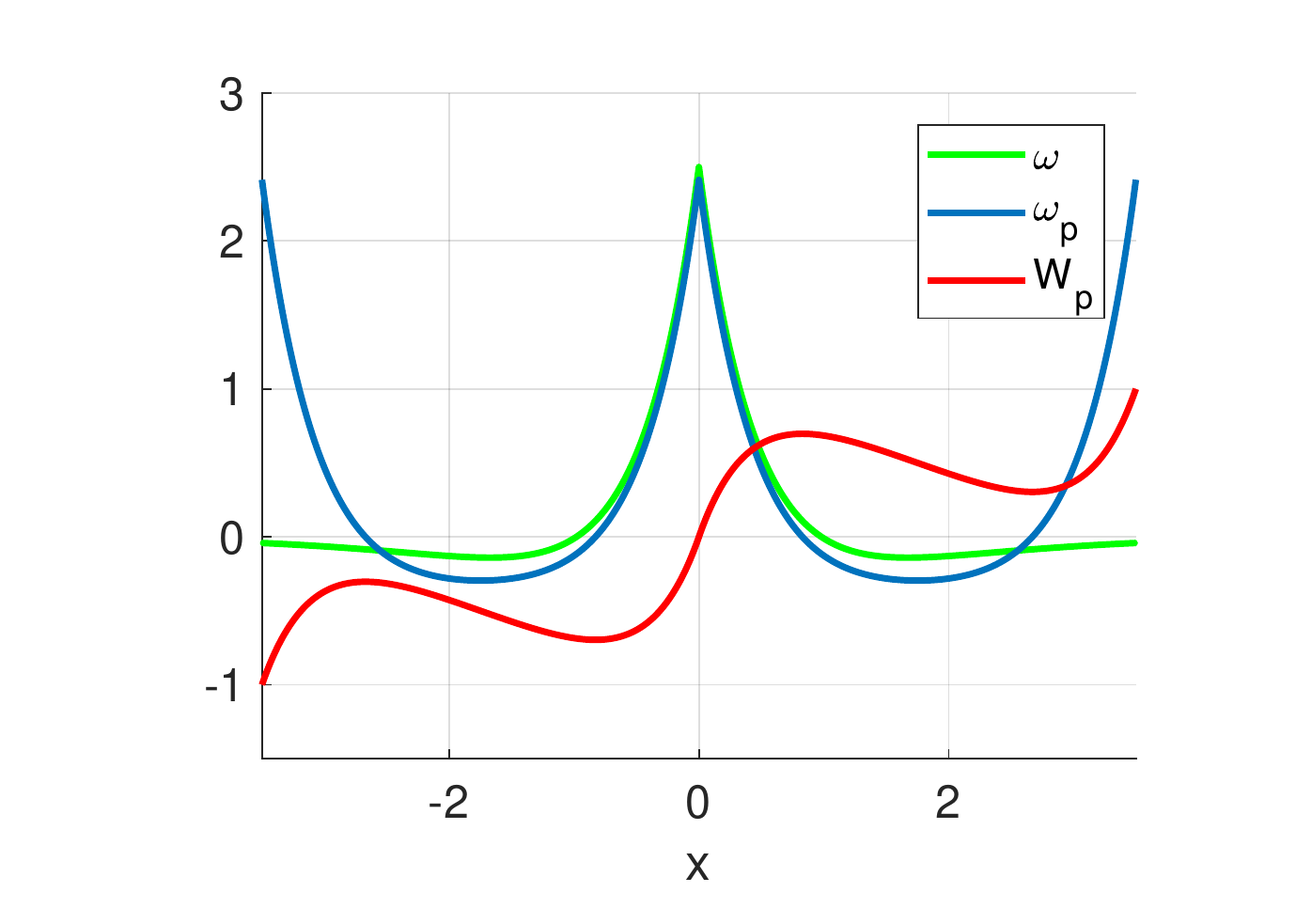}
  \caption{The function $\omega$ given by \eqref{eq:exp-exp} with parameters $S_1=4$, $s_1=2$, $S_2=1.5$, $s_2=1$ and the corresponding $\omega_p$ and $W_p$ , see \eqref{eq:wp:2}-\eqref{eq:Wp:2} with $T=3.5$.}\label{Fig:wpWp2}
\end{figure}

The equation $W_p(2a;T)=h,$ $h\in (0,h_0)$ has one, two, or three solutions depending on $T$. That is for the parameter values $S_1=4,$ $s_1=2,$ $S_2=1.5,$ $s_2=1$ and $h=0.4$, it has one solution for $T<T_1:=2.4997$, two solutions for $T=T_1$ and three solutions for $T>T_1$,  see Fig.\ref{Fig:WpWpWp}.  The value $T_1=2.4997$ is obtained numerically and is rounded up to four decimals. It turns out that all of $u_p$ correspond to 1-periodic bump solutions, see Fig.\ref{Fig:expexpBumps12} -Fig.\ref{Fig:expexpBumps33} .

\begin{figure}[H]
  \centering
  \includegraphics[width=7cm]{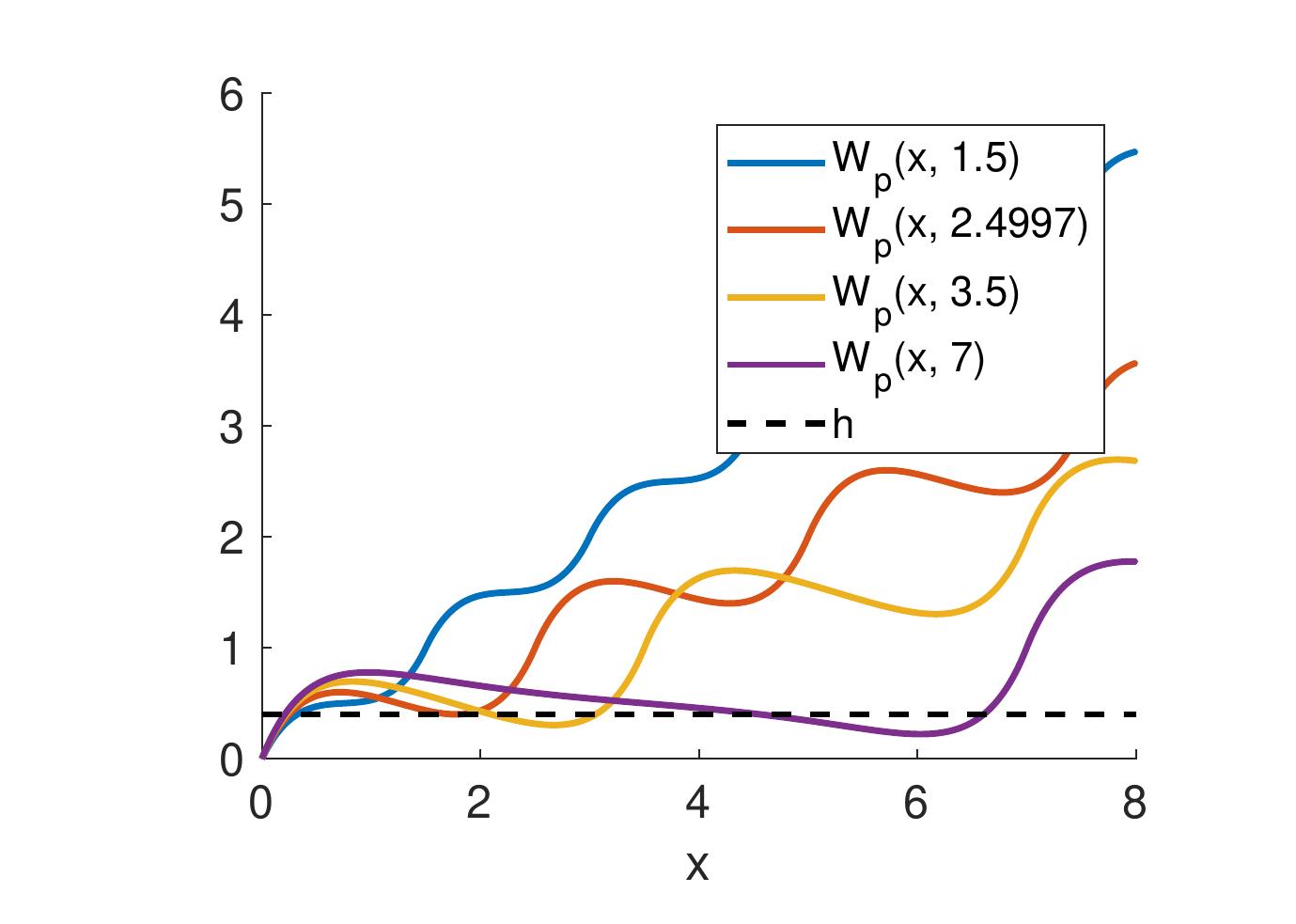}
  \caption{The function $W_p$ in \eqref{eq:Wp:2} with parameters $S_1=4,$ $s_1=2,$ $S_2=1.5$, $s_2=1$ for different periods $T$ and the fixed threshold value $h=0.4$.}\label{Fig:WpWpWp}
\end{figure}

\begin{figure}[H]
  \centering
  \subfigure[]{  \includegraphics[width=7cm]{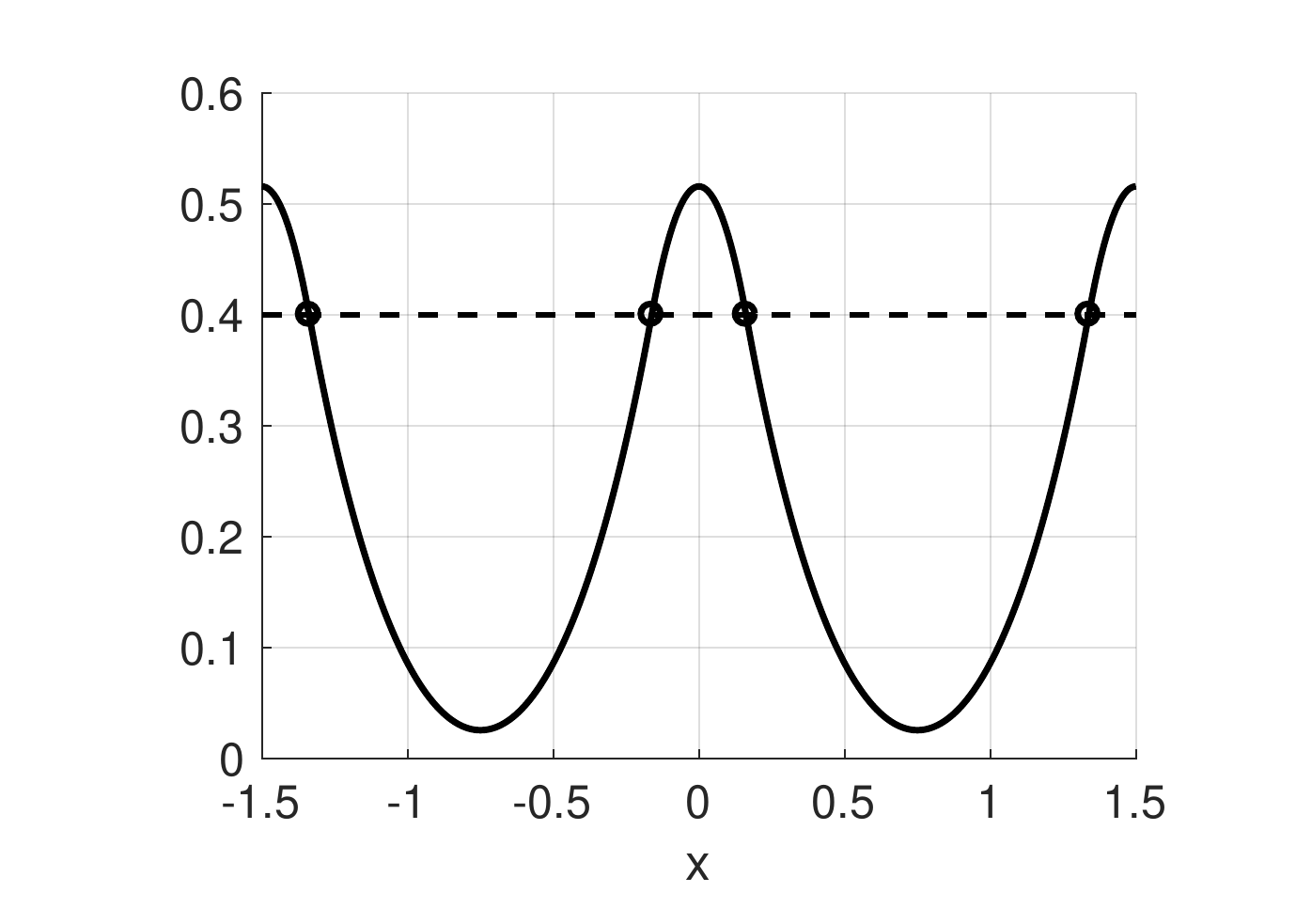}}
  \subfigure[]{  \includegraphics[width=7cm]{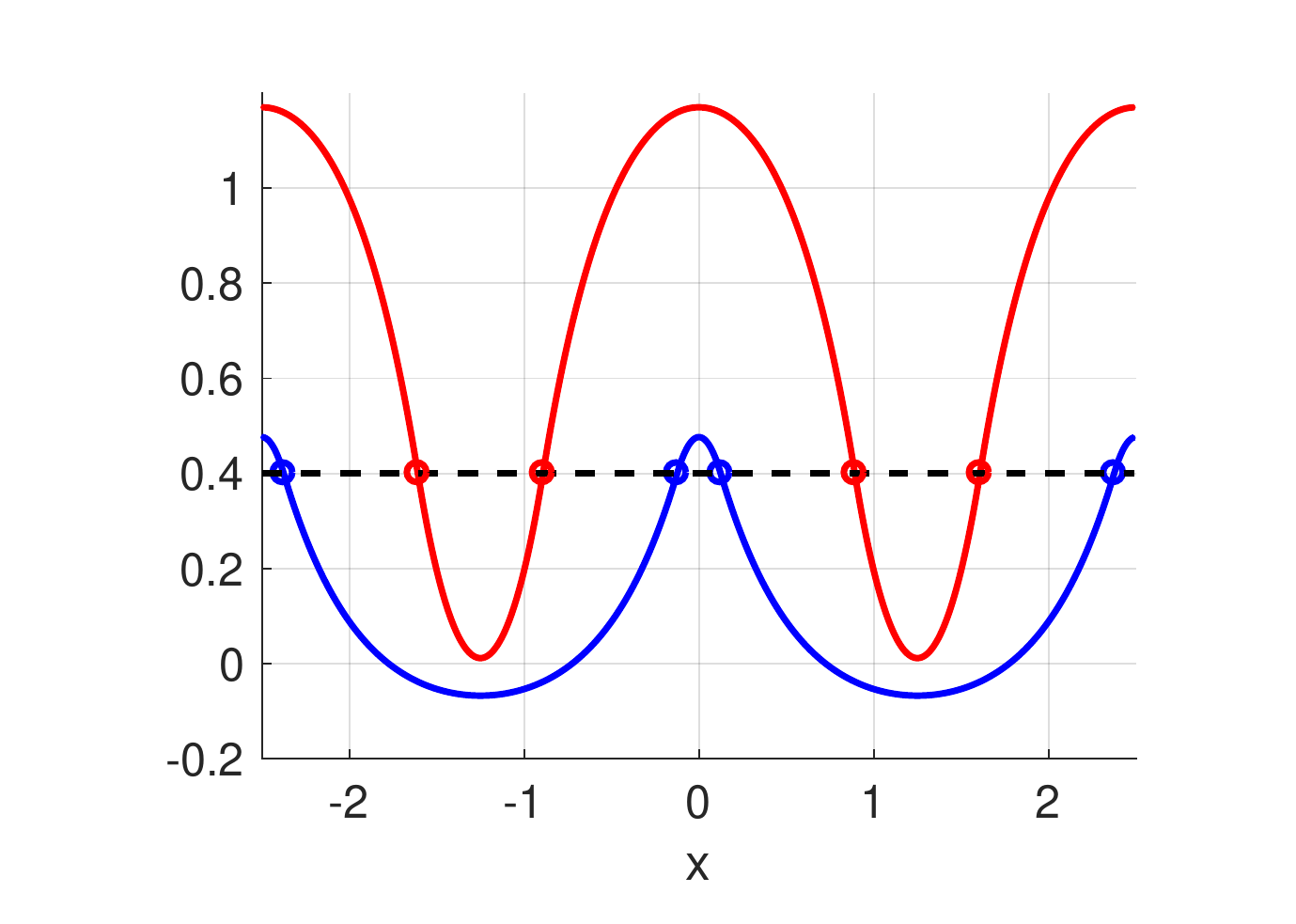}}
    \caption{(a) $1$-bump periodic solutions \eqref{eq:u=int3} with $T=1.5$. The intersection point corresponds to $a=0.1619$ and $h=0.4$. (b) $1$-bump periodic solutions \eqref{eq:u=int3} with $T=2.4997$. The intersection points correspond to $a_1=0.1243$, $a_2=0.8919$ and $h=0.4$. (All the approximated values are rounded up to 4 decimals.)}\label{Fig:expexpBumps12}
\end{figure}

\begin{figure}[H]
  \centering
  \subfigure[]{\includegraphics[width=7cm]{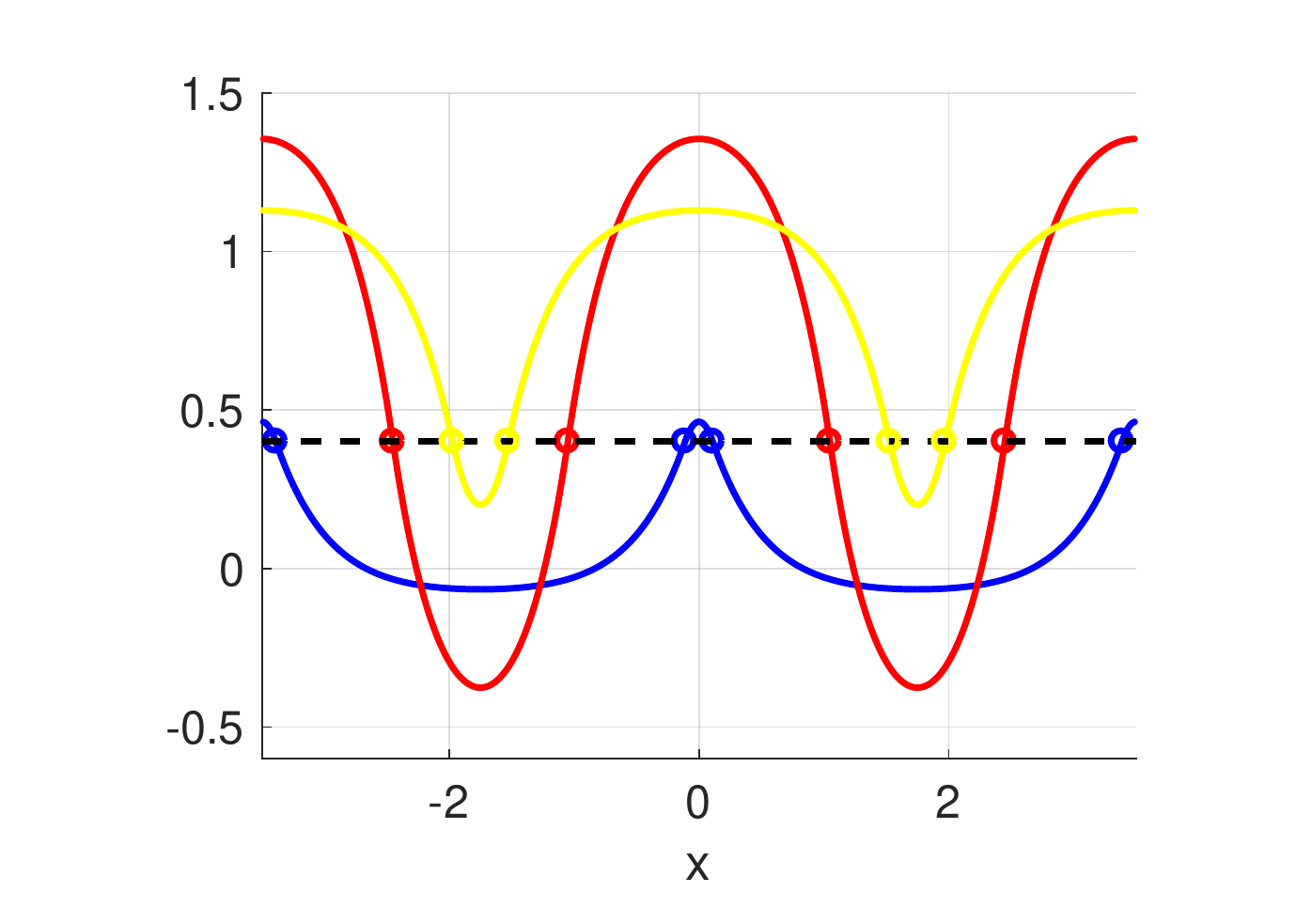}}
  \subfigure[]{\includegraphics[width=7cm]{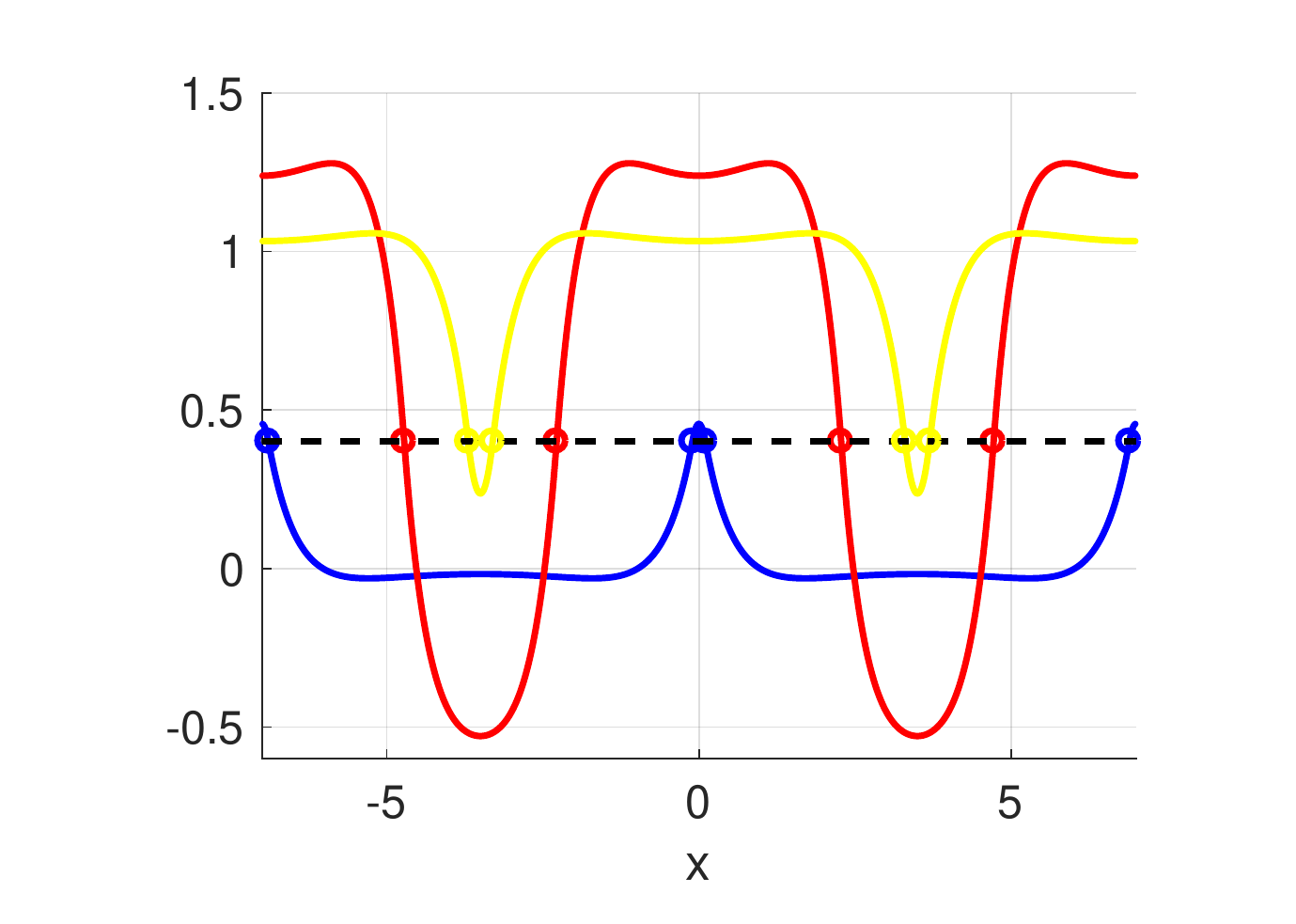}}
    \caption{(a) $1$-bump periodic solutions \eqref{eq:u=int3} with $T=3.5$. The intersection points correspond to $a_1=0.1113$, $a_2=1.0494$ and $a_3=1.5281,$ and $h=0.4$. (b) $1$-bump periodic solutions \eqref{eq:u=int3} with $T=7$. The intersection points correspond to $a_1=0.1046$, $a_2=2.2792$ and $a_3=3.3036$ and $h=0.4$. (All the approximated values are rounded up to 4 decimals.)}\label{Fig:expexpBumps33}
\end{figure}

There are parameters $S_1, S_2,$ and  $s_1, s_2$ that $W_p(2a;T)=h$  have two solutions for $h>h_0$ and some $T>0.$ For example, for $S_1=3,$ $s_1=2,$ $S_2=1.4,$ $s_2=1,$ and $h=0.25$ this situation occurs when $T>2.116$, see Fig. \ref{Fig:X(a)}.  These solutions correspond to the 1-bump periodic solutions, see Fig. \ref{Fig:X(b)}. We however do not aim to study this particular case of the connectivity function in detail. Thus, we will further restrict our attention to the case $h<h_0$, see  Fig. \ref{Fig:WpWpWp}.

\begin{figure}[H]
  \centering
  \includegraphics[width=7cm]{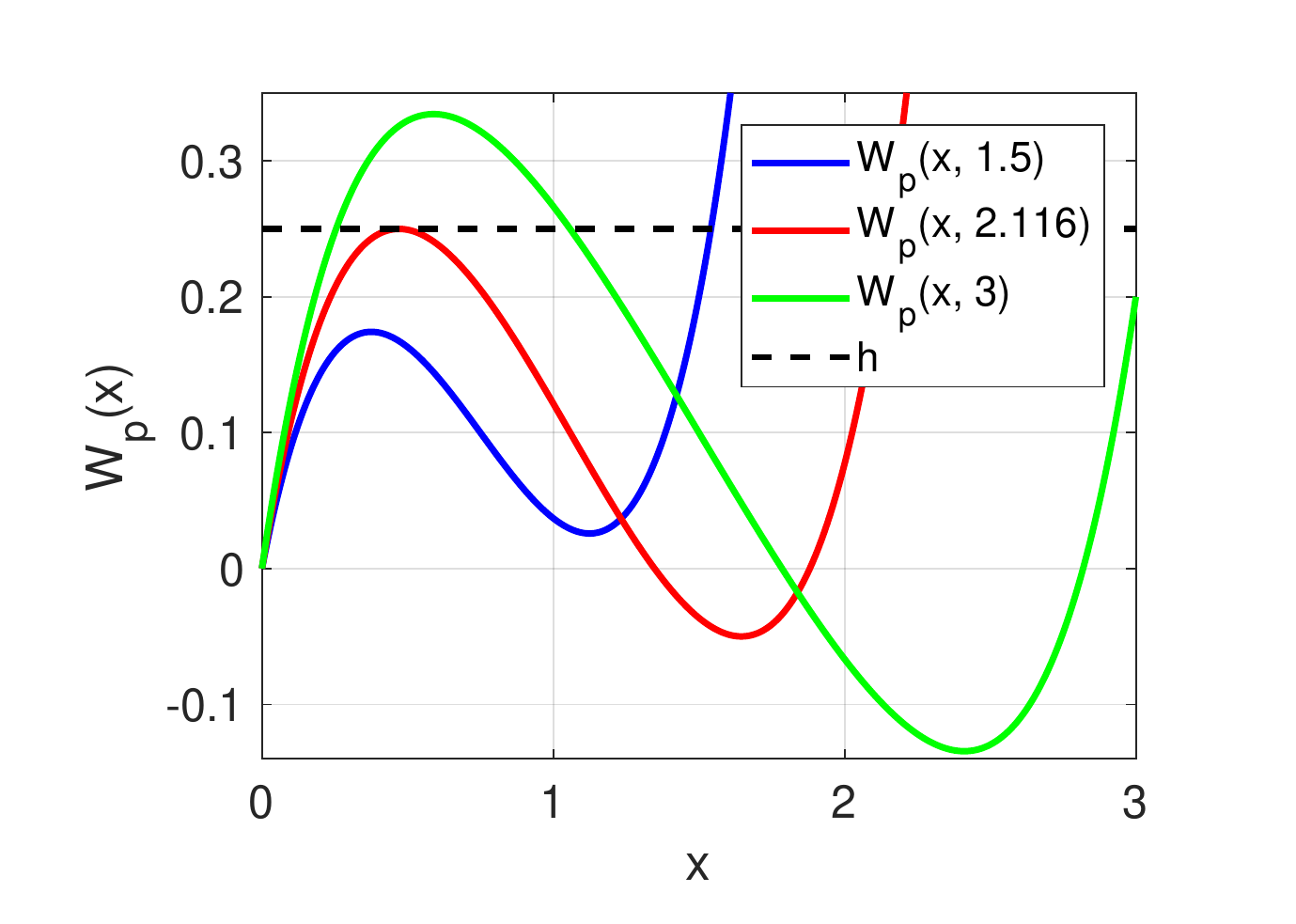}
  \caption{The function $W_p(x;T)$ in \eqref{eq:Wp:2} with parameters $S_1=3,$ $s_1=2,$ $S_2=1.4,$ $s_2=1,$ for different periods $T$ and the fixed threshold value $h=0.25$.}\label{Fig:X(a)}
\end{figure}

\begin{figure}[H]
  \centering
  \subfigure[]{  \includegraphics[width=7cm]{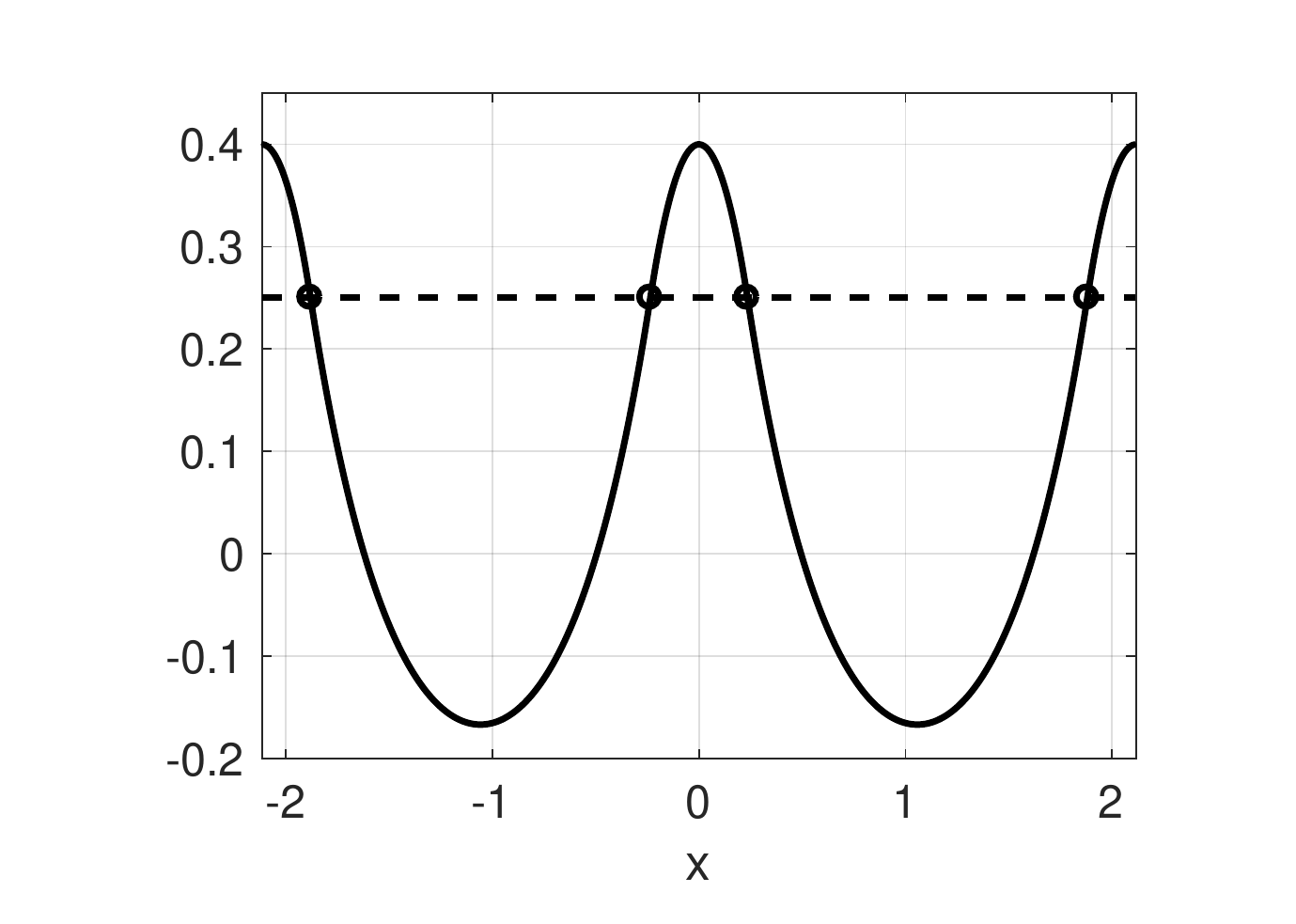}}
  \subfigure[]{\includegraphics[width=7cm]{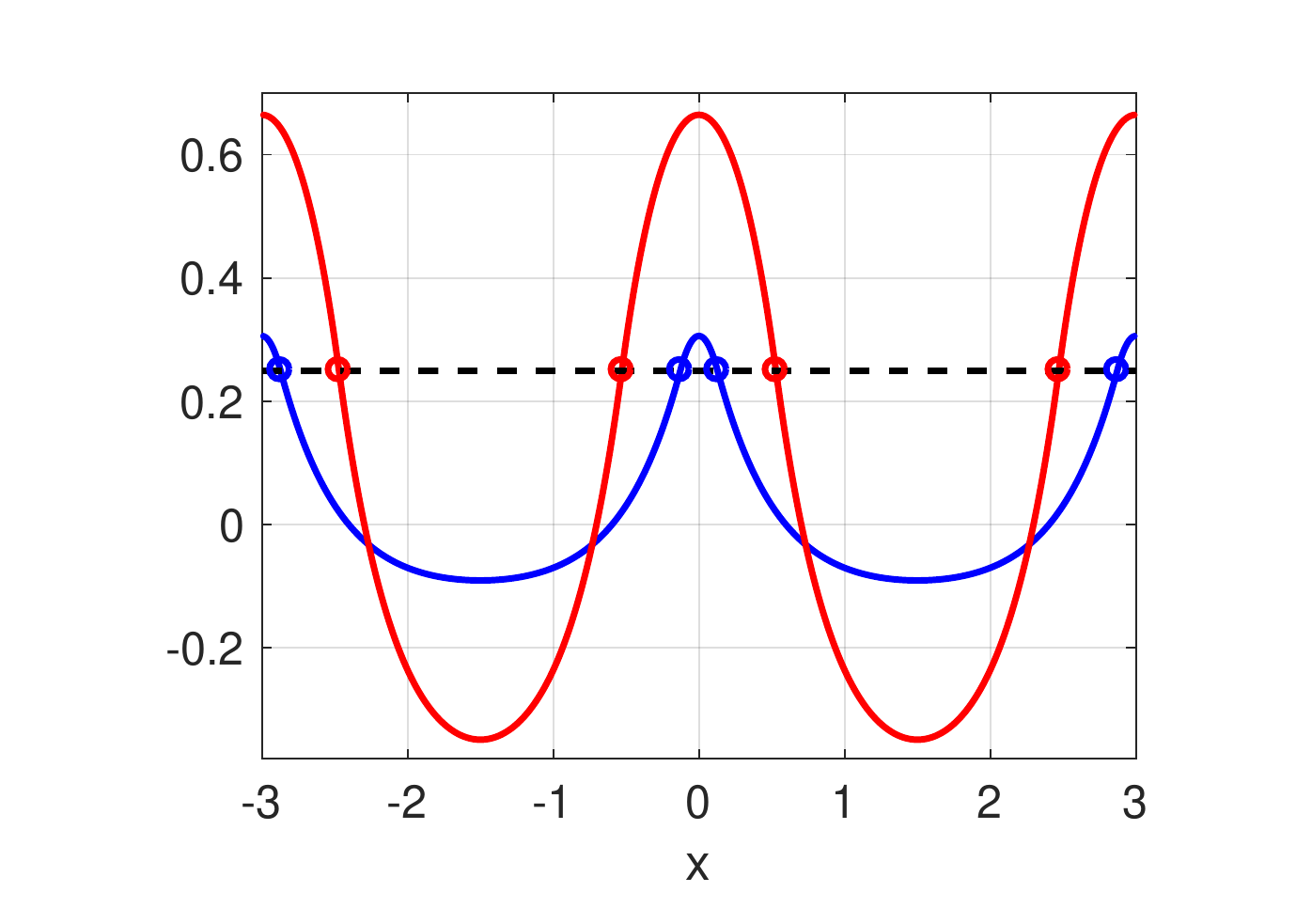}}
  \caption{(a) $1-$bump periodic solution \eqref{eq:u=int3} with $T=2.116$. The point of tangency corresponds to $a=0.2352$ and $h=0.25$. (b) $1-$bump periodic solutions \eqref{eq:u=int3} with $T=3$. The intersection points correspond to $a_1=0.1272$, $a_2=0.5288$ and $h=0.25$. (All the approximated values are rounded up to 4 decimals.)}\label{Fig:X(b)}
\end{figure}


\section{Stability of 1-bump periodic solutions}\label{Sec:Stability}

In this section we study linear stability of regular 1-bump periodic solutions.
We first obtain the Fr\'{e}chet derivative of the Hammerstein operator defined in \eqref{eq:u=Hu} and then study its spectrum.

%
\begin{lemma}\label{lemma:H'(up)}
  Let $h, T>0$ be fixed and $u_p$ be a 1-bump periodic solution of \eqref{Amari_model}. The Fr\'{e}chet derivative of the operator $\cH:C_b^{0,1}(\R)\to C_b^{0,1}(\R)$ at $u_p$ exists and is given as
  $$(\cH'(u_p)v)(x)=\frac{1}{|u'_p(a)|}\sum_{k \in \Z}(\omega(x+a-kT)v(-a+kT)+\omega(x-a-kT)v(a+kT)).$$
\end{lemma}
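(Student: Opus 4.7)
The plan is to use Lemma \ref{lemma:u_p+v} to locate the zero set of $u_p+v-h$ for small perturbations $v$, expand those zeros to first order in $v$ via Taylor's theorem, and then extract the linear part of the resulting integral, using Assumption \ref{as:A} to control the remainder uniformly.

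First I would fix $v\in C_b^{0,1}(\R)$ with $\|v\|_{C_b^{0,1}}$ small enough for Lemma \ref{lemma:u_p+v} to apply. This provides, on each interval $(-T/2+kT,T/2+kT)$, exactly two zeros $a_\pm(k)$ of $u_p+v-h$ close to $\pm a+kT$, between which $u_p+v>h$ and outside which $u_p+v<h$. Since $f=H$ is the unit step, the difference of the Heaviside factors is supported on narrow boundary layers and
\[
\cH(u_p+v)(x)-\cH(u_p)(x)=\sum_{k\in\Z}\Biggl[\int_{a_-(k)}^{-a+kT}+\int_{a+kT}^{a_+(k)}\Biggr]\omega(x-y)\,dy,
\]
with the convention $\int_\alpha^\beta=-\int_\beta^\alpha$ for $\beta<\alpha$. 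The identities $u_p(\pm a+kT)=h=u_p(a_\pm(k))+v(a_\pm(k))$, combined with $u_p\in C_b^1$ and the regularity hypothesis $u_p'(\pm a)=\mp|u_p'(a)|\neq 0$ (the sign coming from evenness of $u_p$), yield the first-order expansions
\[
a_+(k)-(a+kT)=\frac{v(a+kT)}{|u_p'(a)|}+o(\|v\|_{C_b^{0,1}}),\qquad a_-(k)-(-a+kT)=-\frac{v(-a+kT)}{|u_p'(a)|}+o(\|v\|_{C_b^{0,1}}),
\]
with $k$-uniform remainders, since $|u_p'|$ is bounded away from zero in a $k$-independent neighbourhood of $\pm a+kT$ by $T$-periodicity.

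Next, on each boundary layer I would replace $\omega(x-y)$ by $\omega(x\mp a-kT)$ and use Assumption \ref{as:A}(iii) to bound the resulting error by the Lipschitz modulus of $\omega$ times the square of the layer width. Substituting the expansions above into the leading terms reproduces the formula claimed for $\cH'(u_p)v$, while the remainder on the $k$-th period is bounded by a constant times $|\omega(x\mp a-kT)|\cdot\|v\|_{C_b^{0,1}}^2$ plus a Lipschitz correction of the same order. Summability in $k$ of this majorant, uniformly in $x$, follows from Assumption \ref{as:A}(ii); the linear operator given by the formula is a bounded endomorphism of $C_b^{0,1}(\R)$ because $\omega\in C_b^{0,1}$ and $\sum_{k\in\Z}\|\omega(\cdot\mp a-kT)\|_{C_b^{0,1}}<\infty$ by Assumption \ref{as:A}(ii)-(iii).

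The hard part will be upgrading the pointwise-in-$x$ asymptotic expansion to a genuine $C_b^{0,1}$-norm estimate that is uniform in $k$. Controlling the Lipschitz seminorm of the remainder requires a summable majorant not only for $\omega(x\mp a-kT)$ but also for its difference quotients, which is where Assumption \ref{as:A}(ii)-(iii) is used in full strength. In parallel one needs $|a_\pm(k)\mp a-kT|\lesssim\|v\|_{C_b^{0,1}}$ uniformly in $k$, which I expect to follow from combining the single-zero implicit-function argument with $T$-periodicity of $u_p$ and its derivative.
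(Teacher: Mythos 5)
Your overall strategy is the one the paper itself relies on: its proof is a one-line reference to the bump case of \cite{oleynik2016spatially} combined with Lemma \ref{lemma:u_p+v}, and that bump proof is precisely the boundary-layer computation you carry out. Your decomposition of $\cH(u_p+v)-\cH(u_p)$ into the two layers per period, the signs $u_p'(\pm a)=\mp|u_p'(a)|$, and the first-order expansions of $a_\pm(k)$ are all correct, and the leading-order bookkeeping does reproduce the stated formula.

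There is, however, a concrete gap in your error control, and it sits exactly where the periodic case differs from the bump case. You bound the remainder on the $k$-th period by ``a constant times $|\omega(x\mp a-kT)|\,\|v\|_{C_b^{0,1}}^2$ plus a Lipschitz correction of the same order.'' That Lipschitz correction, $\mathrm{Lip}(\omega)\,|a_\pm(k)\mp a-kT|^2\lesssim \mathrm{Lip}(\omega)\,\|v\|_{C_b^{0,1}}^2$, is uniform in $k$ but carries no decay in $k$, because Assumption \ref{as:A} controls the decay of $\omega$ itself and not of its difference quotients; summing this majorant over $k\in\Z$ diverges. The same objection applies, more seriously, to your final paragraph: the ``summable majorant \dots for its difference quotients'' that you intend to extract from Assumption \ref{as:A}(ii)--(iii) does not exist under those hypotheses. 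The repair is to interpolate between the two bounds that \emph{are} available,
\[
\Bigl|\int_{a+kT}^{a_+(k)}\bigl(\omega(x-y)-\omega(x-a-kT)\bigr)\,dy\Bigr|\;\le\;\min\Bigl\{\mathrm{Lip}(\omega)\,\|v\|_{C_b^{0,1}}^2,\;C\,\alpha_k\,\|v\|_{C_b^{0,1}}\Bigr\},
\]
with $\alpha_k$ the summable sequence appearing after Assumption \ref{as:A}(ii), and to split the sum at $|k|=N$ with $N\to\infty$ and $N\|v\|_{C_b^{0,1}}\to 0$ (for instance $N\sim\|v\|_{C_b^{0,1}}^{-1/2}$): the near part is $O(N\|v\|_{C_b^{0,1}}^2)=o(\|v\|_{C_b^{0,1}})$ and the far part is $\|v\|_{C_b^{0,1}}\sum_{|k|>N}\alpha_k=o(\|v\|_{C_b^{0,1}})$. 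In the bump case there are only two boundary layers and no such issue arises, which is why the adaptation is less automatic than the paper's ``easily adopted'' suggests. The same interpolation is needed when you estimate the Lipschitz seminorm of the remainder; there it is cleanest to differentiate $\int_{a+kT}^{a_+(k)}\omega(x-y)\,dy$ in $x$ exactly, obtaining $\omega(x-a-kT)-\omega(x-a_+(k))$, and apply the $\min$-bound to that difference rather than seeking decay of difference quotients of $\omega$.
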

\begin{proof}
Due to Lemma \ref{lemma:u_p+v} and periodicity of $u_p$ the proof in \cite{oleynik2016spatially} for bumps can be easily adopted here.
\end{proof}
We would like to emphasize that the regularity condition on $u_p$, that is $|u'_p(a)|>0$,  is necessary in order for the Fr\'{e}chet derivative to exists.

Next we show how the spectrum of the operator $\cH'(u_p)$ relates to the spectrum of a Laurent block operator, or in some literature, bi-infinite block Toeplitz operator, see e.g. \cite{GohbergGoldbergKaashoek1993Vol2} and \cite{frazho2009operator,van2002spectral}.

Let $\ell_p^{m}(\Z)$ be a Banach space of sequences with entries from $\R^{m}$, see Section \ref{Sec:Notations}.

The block Laurent operator $L: \ell_p^{m}(\Z) \to \ell_p^{m}(\Z)$ can be represented as an bi-infinite matrix with constant diagonal elements, that is, $L=(A_{i-j})_{i,j\in\Z}$ giving

\begin{equation}\label{eq:Laurent op}
L=
\begin{pmatrix}
\ddots&&&&\\
&A_{0}&A_{-1}&A_{-2}&\\
&A_{1}&A_0&A_{-1}&\\
&A_{2}&A_{1}&A_0&\\
&&&&\ddots
\end{pmatrix},
\quad A_k \in \R^{m\times m}.
\end{equation}

The representation \eqref{eq:Laurent op} means that the action of $L$ is given by
\[
\left(L \left(x_n\right)_{n\in \Z}\right)=\left(y_n\right)_{n\in \Z}, \quad y_j=\sum_{i} A_{i-j}x_j.
\]

For $p=1,\infty$ we have
\begin{equation}\label{eq:T-norm}
\|L\|_{op}= \sum\limits_{k\in \Z}\|A_k\|_{op}.
\end{equation}

\begin{theorem}\label{th:AnjaVadim}
  The nonzero spectrum of the operator $\cH'(u_p)$ agrees with that of the Laurent block operator $L:\ell^{2}_{\infty}(\Z) \to \ell^{2}_{\infty}(\Z)$ defined by
 \begin{equation}\label{Eq:LaurentOp}
 A_k=\frac{1}{|u'_p(a)|} \begin{pmatrix}
                                  \omega(kT) & \omega(-2a+kT) \\
                                  \omega(2a+kT) & \omega(kT)
                                \end{pmatrix}.
\end{equation}
Moreover, any eigenfunction  $v(x)$ of $\cH'(u_p)$ (if exists) corresponds to the eigenfunction $\mathrm{v}=(\mathrm{v}_k)_{k\in\Z}$ of $L$ where
\[
\mathrm{v}_k=(v(-a+kT), v(a+kT))^T, \quad k\in \Z,
\]
and for a given eigenfunction $\mathrm{v}$ of $L$ that corresponds to a non-zero eigenvalue, we can calculate the eigenfunction of $\cH'(u_p)$ as
\[
v(x)=\frac{1}{\lambda}\frac{1}{|u'_p(a)|}\sum_{k \in \Z}(\omega(x+a-kT)\mathrm{v}_k^{(1)}+\omega(x-a-kT)\mathrm{v}_k^{(2)}.
\]
\end{theorem}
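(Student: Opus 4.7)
My plan is to factor the Fr\'echet derivative $\cH'(u_p)$ through the sequence space $\ell^{2}_{\infty}(\Z)$ and then invoke the classical identity $\sigma(AB)\setminus\{0\}=\sigma(BA)\setminus\{0\}$ for products of bounded operators between Banach spaces. Concretely, introduce a sampling operator $R\colon C_b^{0,1}(\R)\to\ell^{2}_{\infty}(\Z)$ defined by $(Rv)_k=(v(-a+kT),v(a+kT))^{T}$, and a lift $E\colon\ell^{2}_{\infty}(\Z)\to C_b^{0,1}(\R)$ given by
\[
(E\mathrm{w})(x)=\frac{1}{|u'_p(a)|}\sum_{k\in\Z}\bigl(\omega(x+a-kT)\mathrm{w}_k^{(1)}+\omega(x-a-kT)\mathrm{w}_k^{(2)}\bigr).
\]
Boundedness of $R$ is immediate from the sup-norm control contained in the $C_b^{0,1}$ norm. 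Boundedness of $E$ uses $\omega\in C_b^{0,1}(\R)\cap L_1(\R)$ together with the decay in Assumption~\ref{as:A}(ii): the same $\alpha_k$ estimate exploited in Section~\ref{Sec:Existence} gives absolute and uniform convergence of the series on $\R$, and the Lipschitz norm of $\omega$ yields a uniform Lipschitz bound for $E\mathrm{w}$.

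The explicit formula in Lemma~\ref{lemma:H'(up)} is exactly $\cH'(u_p)=E\circ R$. For the opposite composition I evaluate $(E\mathrm{w})(-a+jT)$ and $(E\mathrm{w})(a+jT)$, relabel the summation index, and read off
\[
(RE\mathrm{w})_j=\sum_{k\in\Z}A_{j-k}\mathrm{w}_k,
\qquad
A_k=\frac{1}{|u'_p(a)|}\begin{pmatrix}\omega(kT)&\omega(-2a+kT)\\ \omega(2a+kT)&\omega(kT)\end{pmatrix},
\]
which is precisely the action of the block Laurent operator from \eqref{Eq:LaurentOp}. Hence $L=R\circ E$.

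With the factorizations $\cH'(u_p)=ER$ and $L=RE$ in hand, for any $\lambda\ne 0$ one verifies directly that
\[
(\lambda I-ER)\Bigl(\tfrac{1}{\lambda}I+\tfrac{1}{\lambda}E(\lambda I-RE)^{-1}R\Bigr)=I
\]
and symmetrically in the other order, so invertibility of one resolvent is equivalent to invertibility of the other. This yields the spectral equality $\sigma(\cH'(u_p))\setminus\{0\}=\sigma(L)\setminus\{0\}$. The eigenfunction correspondence flows from the same factorizations: if $\cH'(u_p)v=\lambda v$ with $\lambda\ne 0$, then $L(Rv)=R(ER)v=\lambda Rv$, so $\mathrm{v}:=Rv$ is an eigenvector of $L$ whose components are exactly $(v(-a+kT),v(a+kT))^{T}$. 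Conversely, if $L\mathrm{v}=\lambda\mathrm{v}$ with $\lambda\ne 0$, define $v:=\lambda^{-1}E\mathrm{v}$, which is the formula displayed in the statement; then $\cH'(u_p)v=ERv=\lambda^{-1}ERE\mathrm{v}=\lambda^{-1}EL\mathrm{v}=E\mathrm{v}=\lambda v$ and $Rv=\lambda^{-1}L\mathrm{v}=\mathrm{v}$, so the two maps invert each other on the respective $\lambda$-eigenspaces.

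The principal technical point is showing that $E$ maps $\ell^{2}_{\infty}(\Z)$ boundedly into $C_b^{0,1}(\R)$: uniform convergence of the series on all of $\R$ and a uniform Lipschitz bound both rely on Assumption~\ref{as:A}(ii) together with $\omega\in C_b^{0,1}$, and deserve to be written out carefully. Once this boundedness is secured, the norm formula \eqref{eq:T-norm} ensures that $L$ itself is bounded on $\ell^{2}_{\infty}(\Z)$, and everything else reduces to the purely algebraic identities $\cH'(u_p)=ER$ and $L=RE$.
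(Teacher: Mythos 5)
Your proposal is correct and reaches the same conclusion, but it is packaged quite differently from the paper's argument. The paper works directly with the resolvent equation: in one direction it samples a solution of $\cH'(u_p)\xi-\lambda\xi=w$ at the grid points $\pm a+kT$ to conclude solvability of the discrete system, and in the other direction it reconstructs $\xi$ pointwise from its grid values via $\xi(x)=\tfrac{1}{\lambda}\bigl((\cH'(u_p)\xi)(x)-w(x)\bigr)$. Your factorization $\cH'(u_p)=ER$, $L=RE$ together with the identity $\sigma(AB)\setminus\{0\}=\sigma(BA)\setminus\{0\}$ and the explicit resolvent formula is the systematic version of exactly that mechanism, and it buys you something the paper's write-up leaves implicit: the forward implication. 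The paper only argues that the sampled system ``possesses a solution'' for right-hand sides of the special form $Rw$, which by itself does not give invertibility of $L$; your $AB$/$BA$ identity handles both directions symmetrically and rigorously. Your eigenvector correspondence via $R$ and $\lambda^{-1}E$ also matches the paper's second claim; you should just add the one-line remark that $Rv=0$ forces $\lambda v=ERv=0$, so the sampled vector of a nonzero eigenfunction is genuinely nonzero.

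One technical point deserves more care than your sketch suggests. The sup-norm bound on $E\mathrm{w}$ does follow from Assumption~(ii) as you say, but the claim that ``the Lipschitz norm of $\omega$ yields a uniform Lipschitz bound for $E\mathrm{w}$'' is not automatic: summing the termwise Lipschitz bounds $\mathrm{Lip}(\omega)\,|x-y|$ over $k\in\Z$ diverges, and Assumptions~(ii)--(iii) alone do not force the periodization $\sum_k\omega(\cdot-kT)$ to be Lipschitz (one can build $\omega$ from shrinking bumps centered at $kT$ that satisfies all of Assumption~\ref{as:A} while the periodized sum fails to be Lipschitz at $0$). The cleanest repair is to not prove boundedness of $E$ from scratch at all: Lemma~\ref{lemma:H'(up)} already guarantees that $ER=\cH'(u_p)$ is a bounded operator on $C_b^{0,1}(\R)$, and $R$ admits a bounded right inverse $R^{+}$ (piecewise-linear interpolation through the grid values, with Lipschitz constant controlled by $\|\mathrm{w}\|_{\infty}/\min\{2a,\,T-2a\}$), whence $E=(ER)R^{+}$ is bounded. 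Alternatively, impose the mild extra decay on the Lipschitz constants of the translates (satisfied by all the examples \eqref{eq:exp}--\eqref{eq:exp-osc}) and carry out your direct estimate. With that fixed, the rest of your argument is purely algebraic and complete.
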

\begin{proof}

First of all we observe that $L$ is a bounded operator on $\ell^2_\infty(\Z)$ since
\[
\|L\|_{op}=\frac{1}{|u'_p(a)|} \sum\limits_{k\in \Z} \left(|\omega(kT)|+\max\{|\omega(\pm 2a+kT)|\}\right)<\infty
\]
due to Assumption \ref{as:A}.

A number $\lambda \in \C$ is in the resolvent set of the operator $\cH'(u_p)$ if and only if the equation
$$\cH'(u_p)\xi-\lambda \xi=w$$
has a solution $\xi$ for any $w$, where $\xi$ and $w$ belong to the complexified $C_b^{0,1}(\R)$.

Thus, if $\lambda\in \C$ is in the resolvent set of the operator $\cH'(u_p)$, then for any $k\in \Z$ the system of equations
$$(\cH'(u_p)\xi)(a+kT)-\lambda\xi(a+kT)\xi=w(a+kT),$$
$$(\cH'(u_p)\xi)(-a+kT)-\lambda\xi(-a+kT)\xi=w(-a+kT)$$
possesses a solution. Hence, $\lambda$ is in the resolvent set of the operator $L$.

Conversely, assume that $\lambda \ne 0$ is in the resolvent set of the operator $L$. Then for any arbitrary $w$ the values $\xi(a+kT)$ and $\xi(-a+kT)$ of the solution to $\cH'(u_p)\xi-\lambda\xi=w$ are determined. For arbitrary $x \in \R$ we set
$$\xi(x)=\frac{1}{\lambda}((\cH'(u_p)\xi)(x)-w(x)).$$
It is straightforward to verify that $\xi \in C_b^{0,1}$ and solves $\cH'(u_p)\xi-\lambda \xi=w$.
We have shown that the resolvent sets of $\cH'(u_p)$ and $L$ agree up to the point $\lambda=0$. Thus, their spectra agree up to the point $\lambda=0$ as well.
The second part of the statement follows from above.
\end{proof}

 The reader can find more information about Laurent operators and their properties in \cite{GohbergGoldbergKaashoek1993Vol2} and more recent studies \cite{van2002spectral,frazho2009operator}. The results concerning in particular the spectrum of Laurent operators can be found in \cite{reichel1992eigenvalues}. Finally, as the spectrum of Laurent operator on $\ell_2^m(\Z)$ is given by the spectrum of the corresponding matrix valued multiplication operator we refer to \cite{Denk2006spect-692} where the spectrum of the latter operator is studied.
For the original paper on the Toeplitz and Laurent operators see \cite{toeplitz1911theorie}.
Since the eigenvalue $0$ does not have any impact on the stability of $u_p,$ we now turn to the study of the Laurent operator in \eqref{eq:Laurent op} with elements as in \eqref{Eq:LaurentOp}.

As $(A_k)_{k\in \Z} \in \ell^{2\times 2}_1(\Z)$ we can define a matrix function $\Phi: S^1 \to \R^{2\times 2}$ as
\begin{equation}\label{eq:symbol}
\Phi(z)=\sum_{k\in \Z} A_k z^k, \quad z\in S^1,
\end{equation}
where $S^1$ is the unit circle.
The power series is uniformly convergent and thus the function $\Phi$  is continuous on $S^1$. The function $\Phi$ is called a symbol or a defining function of $L.$ It is easily observed that $\Phi$ belongs to the Weiner algebra of all periodic functions with absolutely summable sequence of Fourier coefficients, that is $\Phi \in \cW^{2\times2}(S^1).$ Via the Fourier transform the Banach algebra of all block Laurent operators on $\ell^2_\infty(\Z)$ is isomorphic to $\cW^{2\times 2}(S^1)$.

We prove the following important result.

\begin{theorem} \label{th:main:spectra}
\begin{itemize}
\item[(i)]The spectrum of the block Laurent operator $L: \ell^m_\infty(\Z)\to \ell^m_\infty(\Z) $ is given as
\begin{equation} \label{eq:sp}
\sigma(L)= \bigcup \limits_{z\in S^1} \sigma(\Phi(z))
\end{equation}
where $\Phi(z)$ is the symbol \eqref{eq:symbol} of $L.$

\item[(ii)] The spectrum $\sigma(L)$ is pointwise, and the eigenvectors $\mathrm{v}_\lambda=\left(v_k(\lambda)\right)_{k\in \Z}$ of $L$ can be calculated as
\begin{equation}\label{eq:z*w}
\mathrm{v}^{(k)}_\lambda=\bar{z}^k w(z_\lambda)
\end{equation}
where $z_\lambda\in S^1$ is such that $\lambda \in \sigma(\Phi(z_\lambda)),$ and $w(z_\lambda)$ is the corresponding eigenvalue of the matrix $\Phi(z_\lambda).$
\end{itemize}
\end{theorem}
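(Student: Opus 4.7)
The plan is to combine a direct construction of eigenfunctions with the matrix--valued Wiener theorem, using the Banach algebra isomorphism between block Laurent operators with Wiener symbols and $\cW^{m\times m}(S^1)$.

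\textbf{Step 1 (constructive inclusion and part (ii)).} Fix $z_0\in S^1$ and let $w\in\C^m\setminus\{0\}$ be an eigenvector of the matrix $\Phi(z_0)$ with eigenvalue $\lambda$. Define a candidate sequence by $\mathrm{v}^{(k)}:=\bar z_0^{\,k} w$; since $|z_0|=1$, the block norms are constant, so $\mathrm{v}\in\ell^m_\infty(\Z)$. Using $L_{ij}=A_{i-j}$ and a change of summation index, one checks
\begin{equation*}
(L\mathrm{v})^{(i)}=\sum_{j\in\Z}A_{i-j}\bar z_0^{\,j}w=\bar z_0^{\,i}\sum_{k\in\Z}A_k z_0^k w=\bar z_0^{\,i}\Phi(z_0)w=\lambda\,\mathrm{v}^{(i)}.
\end{equation*}
Hence every $\lambda\in\bigcup_{z\in S^1}\sigma(\Phi(z))$ is an eigenvalue of $L$ with an eigenfunction of exactly the form claimed in (ii). This settles one inclusion in (i) and, once the reverse inclusion is known, the pointwise nature of $\sigma(L)$.

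\textbf{Step 2 (algebra homomorphism).} For the reverse inclusion I would exploit that the assignment $\Psi\mapsto L_\Psi$, sending a symbol $\Psi\in\cW^{m\times m}(S^1)$ to the block Laurent operator whose block Fourier coefficients are those of $\Psi$, is a bounded Banach algebra homomorphism into $B(\ell^m_\infty(\Z))$. Boundedness follows from \eqref{eq:T-norm}, and the product structure matches because multiplication of matrix Wiener symbols corresponds on the level of Fourier coefficients to the Cauchy (convolution) product, which is exactly the block-matrix multiplication rule for Laurent operators.

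\textbf{Step 3 (matrix Wiener theorem).} Suppose $\lambda\notin\bigcup_{z\in S^1}\sigma(\Phi(z))$. Then $\det(\Phi(z)-\lambda I)\neq 0$ for every $z\in S^1$. The scalar Wiener theorem applied to $z\mapsto\det(\Phi(z)-\lambda I)\in\cW(S^1)$ places the reciprocal back in $\cW(S^1)$; combining this with the adjugate matrix, whose entries are polynomial (hence Wiener) expressions in the entries of $\Phi$, yields $(\Phi-\lambda I)^{-1}\in\cW^{m\times m}(S^1)$. Via the homomorphism from Step 2, the associated Laurent operator provides a two--sided bounded inverse for $L-\lambda I$ on $\ell^m_\infty(\Z)$, so $\lambda\in\rho(L)$. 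Combined with Step 1, this establishes (i), and the eigenfunction formula of Step 1 then produces eigenvectors for every non--resolvent $\lambda$, completing (ii).

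\textbf{Main obstacle.} The genuinely non--elementary input is the matrix--valued Wiener theorem: pointwise invertibility of $\Phi-\lambda I$ on the circle must upgrade to invertibility inside $\cW^{m\times m}(S^1)$. Once this is granted, verifying that the inverse symbol's Laurent operator inverts $L-\lambda I$ \emph{in} $B(\ell^m_\infty(\Z))$ --- rather than merely in a Banach subalgebra --- is routine Fourier-coefficient bookkeeping, and the eigenvector statement of (ii) is read off directly from Step 1.
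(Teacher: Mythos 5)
Your proposal is correct, and it reaches the conclusion by a more self-contained route than the paper. For part (ii) and the inclusion $\bigcup_{z\in S^1}\sigma(\Phi(z))\subseteq\sigma(L)$, your Step 1 is essentially identical to the paper's argument: the paper also takes $z_\lambda=e^{\ii\theta_\lambda}$, an eigenvector $w(z_\lambda)$ of $\Phi(z_\lambda)$, sets $v_k=e^{-\ii k\theta_\lambda}w(z_\lambda)$, and verifies $Lv=\lambda v$ by the same index shift. The difference is in the reverse inclusion of part (i): the paper does not prove it but cites the literature, invoking the fact that invertibility of operators in the Wiener algebra is independent of the underlying space $\ell_p^m(\Z)$ and that the spectrum of a block Laurent operator equals $\{\lambda:\det(\Phi(z)-\lambda I)=0 \text{ for some } z\in S^1\}$ (Gohberg--Goldberg--Kaashoek, Reichel--Trefethen, Denk--M\"oller--Tretter, Lindner, Seidel). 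You instead derive this directly: the symbol-to-operator map is a bounded Banach algebra homomorphism into $B(\ell^m_\infty(\Z))$, and pointwise invertibility of $\Phi-\lambda I$ upgrades to invertibility in $\cW^{m\times m}(S^1)$ by applying the scalar Wiener $1/f$ theorem to $\det(\Phi-\lambda I)$ and multiplying by the adjugate, whose entries stay in the Wiener algebra. This buys a proof that works directly on $\ell^m_\infty(\Z)$ without appealing to the $p$-independence results, at the cost of taking Wiener's lemma as the one external input; the paper's citation-based route is shorter but less transparent. Your observation that the eigenvector construction of Step 1 already supplies one inclusion of (i), so that only the resolvent-set inclusion needs the algebra machinery, is a small economy the paper does not make explicit. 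No gaps.
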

\begin{proof}
To prove the first statement we recall that invertibility (and Fredholmness) of operators on the Wiener algebra is independent on underlying space, see \cite{Lindner2008,seidel2014fredholm} and  references therein. That is, the spectrum of $L: \ell_p^m(\Z)\to \ell_p^m(\Z)$, does not depend on $1\leq p\leq \infty,$ and is given by all the values $\lambda \in \C$ such that $\det(\Phi(z)-\lambda I)=0$
for some $z\in S^1,$ see \cite{GohbergGoldbergKaashoek1993Vol2,reichel1992eigenvalues} and \cite{Denk2006spect-692}.\\

To prove the second statement let $\lambda \in \sigma(L)$. From \eqref{eq:sp} there exists $z_\lambda=\exp(i\theta_\lambda)$, $\theta_\lambda \in [0,2\pi)$ such that
\[
\det(\Phi(z_\lambda)-\lambda I)=0.
\]

Thus, there exists an eigenvector $w(z_\lambda) \in \C^{m}$ such that
\[
\Phi(z_\lambda)w(z_\lambda)=\lambda w(z_\lambda).
\]

Let us define $v \in \ell^m_\infty(\Z)$ as follows
\[
v=\{v_k\}_{k\in \Z}, \quad v_k=e^{-\i k\theta_\lambda} w(z_\lambda).
\]

It is easy to check that $v\in \ell^m_\infty(\Z)$ and is the eigenfunction of the Laurent operator $L$ corresponding to $\lambda.$
Indeed, for the $n$th row we have
\[
\begin{split}
(L v)_n&=\sum\limits_{k\in \Z} A_{k-n} e^{\i k\theta_\lambda}w(z_\lambda)=
\sum\limits_{l\in \Z} A_l e^{\i(n+l)\theta_\lambda}w(z_\lambda)=\\
&=e^{\i n\theta_\lambda}\sum\limits_{l\in \Z} A_l e^{\i l\theta_\lambda}w(z_\lambda)=
e^{\i n\theta_\lambda}´\Phi(z_\lambda)=e^{\i n\theta_\lambda} \lambda w(z_\lambda)=\\
&=\lambda v_n.
\end{split}
\]
\end{proof}

Next, we describe some properties of the symbol $\Phi$ that corresponds to the Laurent operator \eqref{Eq:LaurentOp}.

\begin{lemma}\label{lemma:phi}
The matrix $\Phi(z)$ in \eqref{eq:symbol} with $A_k$ given by \eqref{Eq:LaurentOp} is  self-adjoint and $\overline{\Phi(z)}=\Phi(\overline{z}),$ $z\in S^1.$
\end{lemma}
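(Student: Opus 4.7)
The plan is to reduce both statements to two elementary facts about the coefficient matrices $A_k$: (a) they are real-valued, and (b) they satisfy $A_k^T = A_{-k}$. Once these are in hand, the two claims follow immediately from the identity $\bar z = z^{-1}$ for $z \in S^1$.

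First I would note that under Assumption \ref{as:A} the connectivity function $\omega$ is real-valued, so every entry of $A_k$ is real; consequently $\overline{\Phi(z)} = \sum_{k \in \Z} A_k \overline{z}^k = \Phi(\bar z)$ for $z \in S^1$, which is the second claim. Next I would establish $A_k^T = A_{-k}$ by inspecting entries: the diagonal entries $\omega(kT)$ are invariant under $k \mapsto -k$ by evenness of $\omega$ (Assumption \ref{as:A}(i)); the $(1,2)$-entry of $A_k^T$ is $\omega(2a+kT)$, which by evenness equals $\omega(-2a-kT)$, and this is precisely the $(1,2)$-entry of $A_{-k}$. The $(2,1)$-entry is handled in the same way. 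Altogether $A_k^T = A_{-k}$.

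Given these two observations, self-adjointness is a short computation. On $S^1$ we have $z^{-k} = \bar z^k$, and since $A_k$ is real,
\begin{equation*}
\Phi(z)^* = \overline{\Phi(z)^T} = \sum_{k \in \Z} A_k^T\, \bar z^k = \sum_{k \in \Z} A_{-k}\, z^{-k} = \sum_{l \in \Z} A_l\, z^l = \Phi(z),
\end{equation*}
after re-indexing $l = -k$. Both assertions follow.

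I do not expect any genuine obstacle here; the only subtle point is to keep track of the transpose swapping $\omega(2a+kT)$ with $\omega(-2a+kT)$, which is resolved by the evenness of $\omega$. The argument uses nothing beyond the explicit form \eqref{Eq:LaurentOp} of $A_k$ and Assumption \ref{as:A}(i).
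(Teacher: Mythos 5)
Your proposal is correct and follows essentially the same route as the paper: realness of $\omega$ gives $\overline{\Phi(z)}=\Phi(\bar z)$, and self-adjointness comes from conjugate-transposing, re-indexing $k\mapsto -k$, and invoking the evenness of $\omega$ (Assumption~\ref{as:A}(i)). Your isolation of the identity $A_k^T=A_{-k}$ is just a tidier packaging of the entrywise computation the paper writes out explicitly.
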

\begin{proof}
The second property follows directly from \eqref{eq:symbol} and $\omega(x)$ being real.
To show that $\Phi(z)$ is self-adjoint let $\theta \in [0,2\pi).$ Then we have
\begin{equation*}
\begin{split}
\overline{\Phi(z)^T}=&
\sum_{k\in Z}
\begin{pmatrix}
\omega(kT) &\omega(2a+kT)\\
\omega(-2a+kT)&\omega(2kT)
\end{pmatrix} e^{-\i k\theta}=\\
&\sum_{m\in Z}
\begin{pmatrix}
\omega(-mT) &\omega(2a-mT)\\
\omega(-2a-mT)&\omega(-mT)
\end{pmatrix}
 e^{\i m \theta}=\Phi(z)
\end{split}
\end{equation*}
as $\omega(x)$ is symmetric, see Assumption \ref{as:A}(i).
\end{proof}

From Lemma \ref{lemma:phi} and Theorem  \ref{th:main:spectra}(i) the spectrum of $L$, and consequently of $\cH'(u_p)$, is real and
\begin{equation}
\label{eq:spec}
\sigma(L)=\bigcup_{z\in S^1} \left(\lambda_{1,2}(z)\right)
\end{equation}
where
\begin{equation}\label{eq:lambdas}
\lambda_1(z)=\Phi_{11}(z)- |\Phi_{12}(z)| \mbox{ and } \lambda_2(z)=\Phi_{11}(z)+ |\Phi_{12}(z)|
\end{equation}
 and $\Phi_{ij}(z)$ are the entries of the symbol matrix $\Phi(z).$ Moreover, it is enough to consider only half of the circle, that is, $z=e^{\i\theta}$ with $\theta \in [0,\pi].$

Let now $z_\lambda=e^{\i\theta}$ in Theorem \ref{th:main:spectra}(ii)  with $\theta/(2\pi)$ being a rational number from $[0,0.5],$ i.e. $\theta/(2\pi)=p/q,$ $p\cup\{0\},$ $q\in \N$  where $p$ and $q$  are in the lowest terms. Then from \eqref{eq:z*w} the corresponding eigenvector $\mathrm{v}$ is $(1+q)$-periodic.
If $\lambda\not=0$ then from Theorem \ref{th:AnjaVadim}, the eigenfunction $v$ of $\cH'(u_p)$ is $(1+q)T$-periodic. Thus, we can calculate the spectrum even without calculating the symbol $\Phi$. We summarize it as a theorem.

\begin{theorem}\label{th:clAn}
The spectrum of the operator $L$ is given as
\[
\sigma(L)=cl\left(\bigcup \sigma\left(L(1+q)\right)\right)
\]
where $L(1+q)$, $q=1,2,...,$ are $2(1+q)\times 2(1+q)$ matrices given as
\[
L(1+q)=
\begin{pmatrix}
B_0& B_1&B_2& ...&B_{q}\\
B_{q}&B_0&B_1&...&B_{q-1}\\
\vdots&\vdots&\vdots&\vdots&\vdots\\
B_1&B_2&B_3&...&B_0
\end{pmatrix}
\]
where
\[
B_n=\frac{1}{|u'_p(a)|}
\begin{pmatrix}
\omega_p(nT;(1+q)T)& \omega_p(-2a+nT;(1+q)T)\\
\\
\omega_p(2a+nT;(1+q)T)& \omega_p(nT;(1+q)T)
\end{pmatrix}, n=0,...,q.
\]
\end{theorem}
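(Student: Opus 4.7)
The plan is to identify each $L(1+q)$ as a block-circulant matrix whose spectrum is precisely $\bigcup_{\zeta^{1+q}=1}\sigma(\Phi(\zeta))$, and then to pass to the limit via density of the roots of unity in $S^1$ together with continuity of the symbol.

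Fix $N = 1+q \ge 2$. The first step is the folding identity
$$\sum_{m\in\Z} A_{n+mN} \;=\; B_n,\qquad n=0,\dots,N-1,$$
which follows immediately from $\omega_p(x;NT)=\sum_{k\in\Z}\omega(x-kNT)$ together with evenness of $\omega$; absolute summability of $(A_k)_{k\in\Z}$ (noted in the proof of Theorem \ref{th:AnjaVadim}) legitimises the rearrangement. Consequently, for each $N$-th root of unity $\zeta_r = e^{2\pi \ii r/N}$,
$$\Phi(\zeta_r) \;=\; \sum_{k\in\Z} A_k \zeta_r^{k} \;=\; \sum_{n=0}^{N-1}\zeta_r^{n}\sum_{m\in\Z}A_{n+mN} \;=\; \sum_{n=0}^{N-1}\zeta_r^{n} B_n.$$
Since $L(1+q)$ is block-circulant with first block-row $(B_0,\dots,B_{N-1})$, a direct check modelled on \eqref{eq:z*w} shows that whenever $\Phi(\zeta_r)w = \lambda w$, the block vector $(\zeta_r^{k} w)_{k=0}^{N-1}\in\C^{2N}$ is an eigenvector of $L(N)$ with eigenvalue $\lambda$. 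Counting dimensions ($2N$ on each side, with multiplicity) yields
$$\sigma\bigl(L(1+q)\bigr)\;=\;\bigcup_{r=0}^{N-1}\sigma\bigl(\Phi(\zeta_r)\bigr)\;\subseteq\;\sigma(L),$$
the inclusion being a consequence of Theorem \ref{th:main:spectra}(i).

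Taking the union over $q\ge 1$ the set on the right becomes $\bigcup_{\zeta\in Q}\sigma(\Phi(\zeta))$, where $Q$ denotes the roots of unity of order $\ge 2$. Since $\sigma(L)$ is closed (as $L$ is bounded), the display above gives one inclusion $cl\bigl(\bigcup_{q\ge 1}\sigma(L(1+q))\bigr)\subseteq\sigma(L)$ for free. For the reverse inclusion I would use that, by Lemma \ref{lemma:phi}, $\Phi$ takes values in self-adjoint matrices on $S^1$ and its eigenvalues are given by the two continuous scalar branches \eqref{eq:lambdas}; combined with density of $Q$ in $S^1$ this shows that every $\lambda\in\bigcup_{z\in S^1}\sigma(\Phi(z))=\sigma(L)$ is a limit of eigenvalues drawn from $\bigcup_{q\ge 1}\sigma(L(1+q))$. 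The only mildly delicate step is this last density/continuity argument, which is made transparent by self-adjointness of $\Phi(z)$: it reduces Hausdorff-continuity of $z\mapsto\sigma(\Phi(z))$ to continuity of the two explicit real-valued functions in \eqref{eq:lambdas}, which is immediate from continuity of $\Phi$ on $S^1$ (guaranteed by $\Phi\in\cW^{2\times 2}(S^1)$).
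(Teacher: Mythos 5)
Your proof is correct and follows essentially the same route the paper takes: roots of unity correspond to periodic eigenvectors, the folded coefficients give the block-circulant matrices $L(1+q)$, and density of the roots of unity in $S^1$ together with continuity of the eigenvalue branches \eqref{eq:lambdas} yields the closure identity via Theorem \ref{th:main:spectra}. In fact the paper only sketches this in the paragraph preceding the theorem, so your write-up supplies the details (the folding identity $\sum_{m}A_{n+mN}=B_n$ and the block-circulant diagonalization) that the paper leaves implicit.
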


We illustrate Theorem \ref{th:clAn} in Fig.\ref{Fig:spEx1}(b) for $\omega$ as in \eqref{eq:exp}.

When $q=0$ we readily calculate $L(1)=B_0$ where
\[B_0=\frac{1}{u'_p(a)}
\begin{pmatrix}
\omega_p(0;T)& \omega_p(2a;T)\\
\\
\omega_p(2a;T)& \omega_p(0;T)
\end{pmatrix},
\]
has the eigenvalues
\begin{equation} \label{eq:lambdas12}
\lambda_{1,2}=\frac{\omega_p(0;T)\pm \omega_p(2a;T)}{\omega_p(0;T)- \omega_p(2a;T)}
\end{equation}
or, equivalently,
$\lambda_{1}=1$ and $\lambda_2=1+2\omega_p(2a;T)/|\omega_p(0;T)- \omega_p(2a;T)|.$

These eigenvalues are similar to the ones obtained for bump solutions. Indeed, for a bump solution one can compute the corresponding eigenvalues of the Fr\'{e}chet operator (at a bump solution) as $\mu_{1}=1$ and $\mu_2=1+2\omega(2a)/|\omega(0)- \omega(2a)|,$ see e.g.\cite{KosOl2013unstable}.
The first eigenvalue $\lambda_1=1$ ($\mu_1=1$) corresponds to the translation of the solution, see \cite{KosOl2013unstable}. Thus, for the bump solutions, the sign of $\omega(2a)$ will define the linear stability.
In the case of 1-bump periodic solutions,  $\omega_p(2a;T)>0$ implies instability. If $\omega_p(2a;T)<0$ the eigenvalues of $L(2),$ then $L(3)$ and etc., must be calculated. The structure of $L(1+q)$ could be useful in exploring spectrum if the analytic expression for $\Phi$ is not available.

As we aim at studying Lyapunov stability of 1-bump periodic solutions for \eqref{Amari_model} with smooth sigmoid like function $f$ by deriving spectral asymptotic, the eigenvalue $1$ ideally must be isolated and have multiplicity one. We believe that the second condition could be satisfied under some additional assumptions on $\omega_p,$ including  $\omega_p(2a;T) \not=0$. The first condition, however, is never satisfied. Thus one must employ more detailed analysis of spectral convergence than in the case of bump solutions  \cite{OKS(Unpublished)}.
However, this is  out of the scope of this paper.

In the next section we apply the theory above to study linear stability of the 1-bump periodic solutions from Section \ref{Sec:Existence:Ex}.

\subsection{Examples} \label{Sec:Stability:Ex}

Define the auxiliary functions
\begin{equation}
\alpha(\theta;s,T)=\frac{\sinh(sT)}{\cosh(sT)-\cos(\theta)}
\end{equation}
and
\begin{equation}
\beta(\theta;s,T)=\frac{\sinh(2as)e^{-\i \theta}+\sinh(s(T-2a))}{\cosh(sT)-\cos(\theta)}.
\end{equation}

Then, for $\omega$ as in \eqref{eq:exp} we obtain
\[
\Phi(e^{\i\theta})=\dfrac{S}{|u'_p(a)|}
\begin{pmatrix}
\alpha(\theta;s)& \overline{\beta(\theta;s)}\\
\beta(\theta;s)&\alpha(\theta;s)
\end{pmatrix}.
\]

In Fig. \ref{Fig:spEx1}(a) we plot $\lambda_1(e^{\i\theta})$ and $\lambda_2(e^{\i\theta})$ as functions of $\theta,$ $\theta \in [0,2\pi)$ for $T=4$ and the parameters as in Fig.\ref{Fig:wpWp1andBump}. As $\lambda_i(z)-1<0$ the 1-bump periodic solution is linearly unstable. It can be shown that this is always the case for all admissible parameters and any $T>0.$ Indeed, for $S=0.5$ and $s=1,$ we obtain $a\to -0.5 \log(|2h-1|)$ as $T\to \infty$ and $\lambda_1 \to 1$ while $\lambda_2 \to 1/h-1>1$. We notice that these values could be obtained by passing the limit in \eqref{eq:lambdas12}. In Fig.\ref{Fig:SpU_exp} we plot the minimum and maximum of $\lambda_2(e^{\ii \theta})$ (red curves) and  $\lambda_1(e^{\ii \theta})$ (blue curves) for different $T.$ As $T\to 0,$ $\max_\theta \lambda_2(e^{\ii \theta}) \to \infty.$

\begin{figure}[H]
  \centering
\includegraphics[width=7cm]{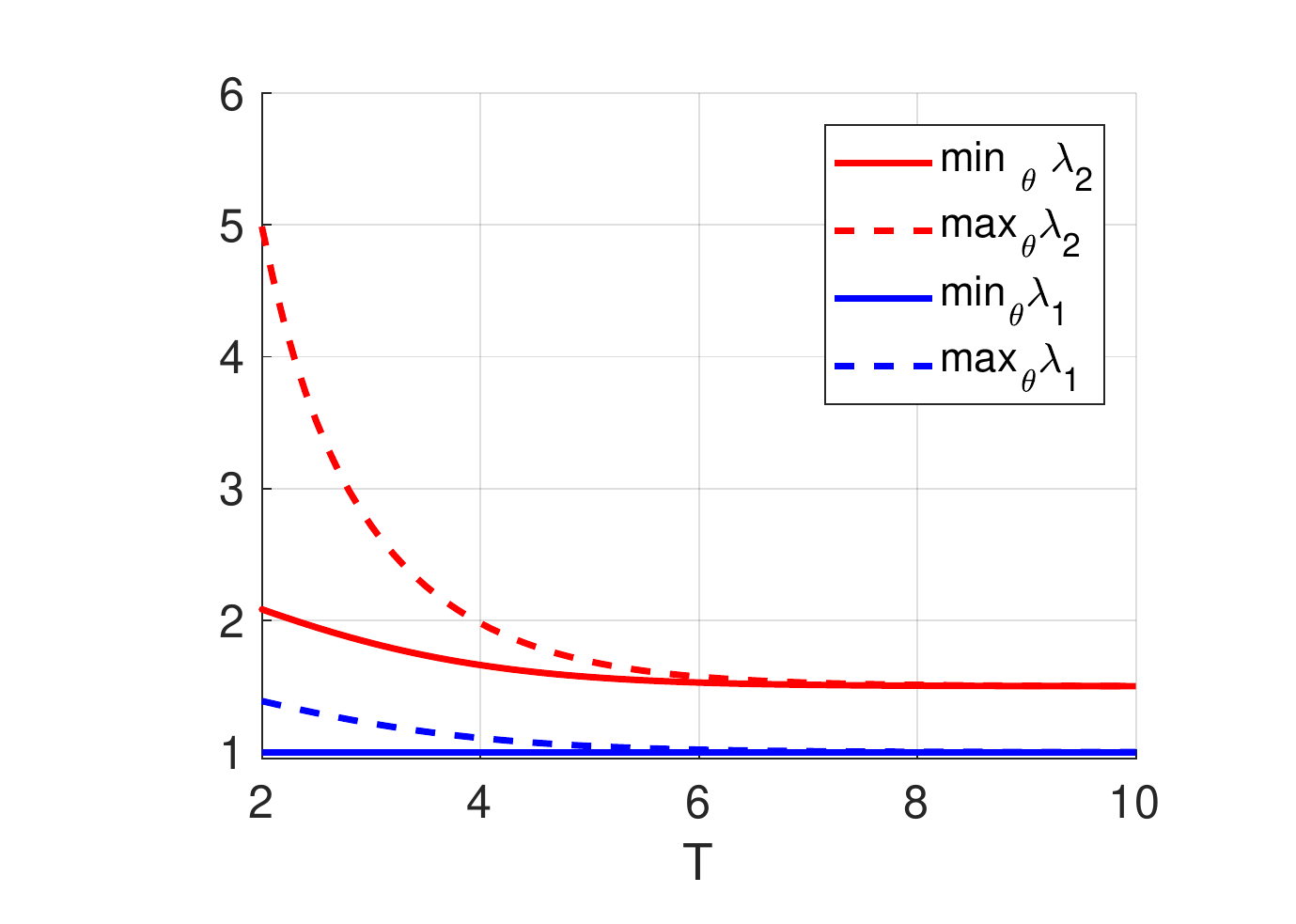}
 \caption{Bounds for $\sigma(L)$ in \eqref{eq:spec} depending on $T$ when $\omega$ is given by \eqref{eq:exp} with $S=0.5,$ $s=1,$ and $h=0.4.$  }
 \label{Fig:SpU_exp}
\end{figure}

In order to illustrate Theorem \ref{th:clAn}, we plot the eigenvalues of the matrices $L(n)$ for $n=6,$ $n=10$ and $n=50$ in Fig. \ref{Fig:spEx1}(b).


\begin{figure}[H]
  \centering
  \subfigure[]{  \includegraphics[width=7cm]{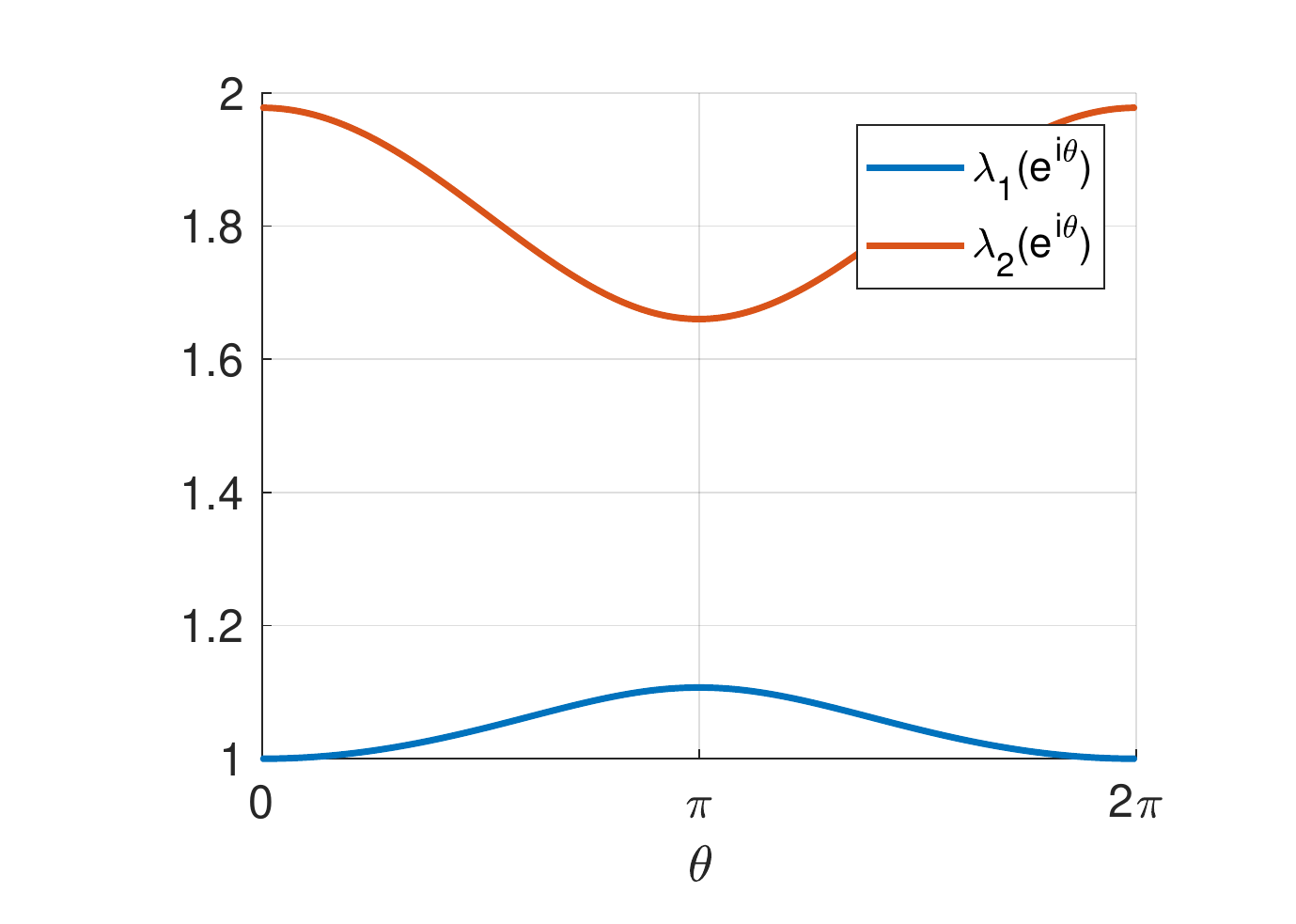}}
  \subfigure[]{\includegraphics[width=7cm]{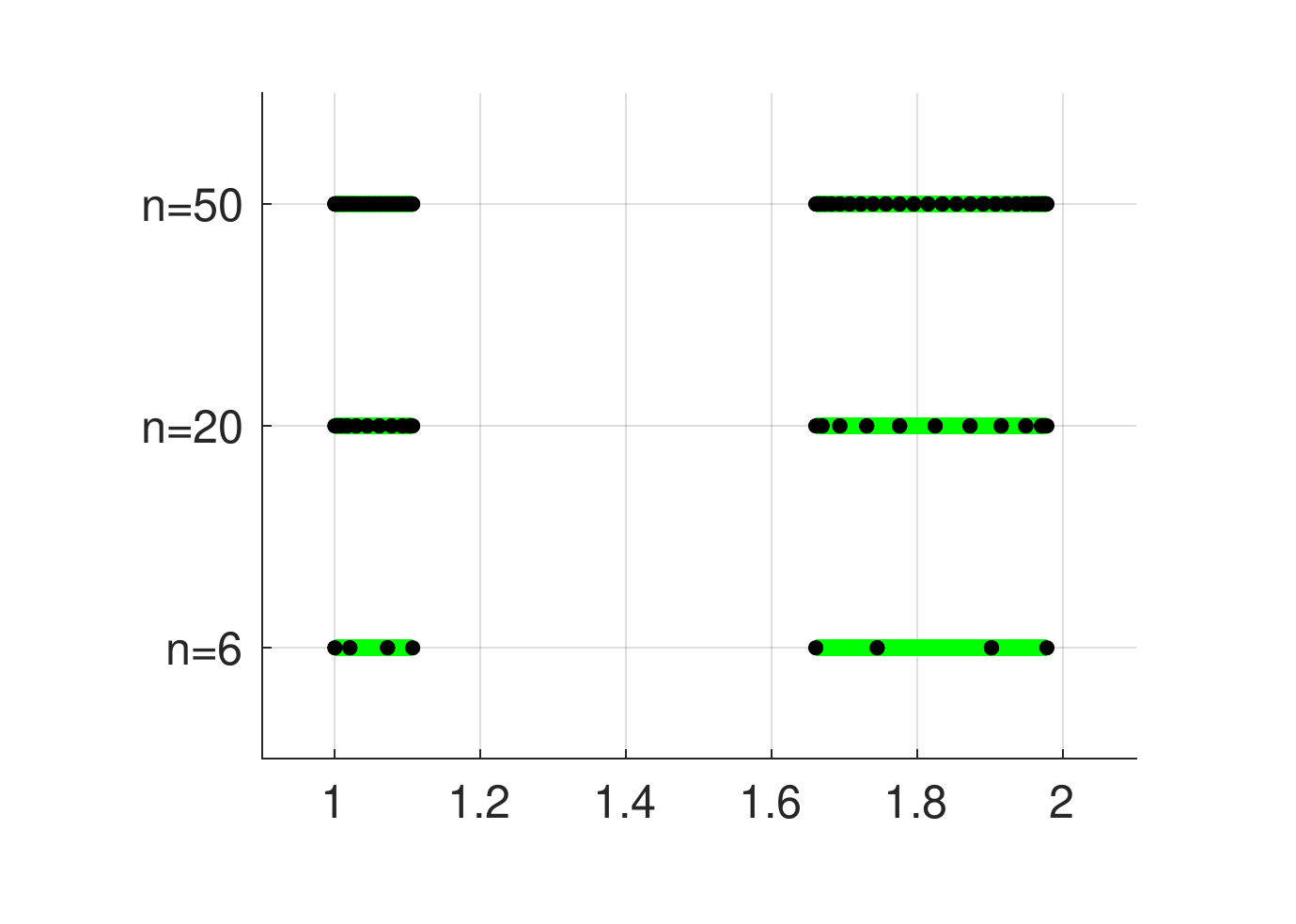}}
    \caption{(a) The eigenvalues $\lambda_{1,2}(e^{\i\theta})$ as functions of $\theta$ when $\omega$ is given by \eqref{eq:exp} with parameters $S=0.5,$ $s=1$, $h=0.4$ and $T=4.$ (b) The eigenvalues of the matrices $L(n)$ for $n=6,$ $n=20$ and $n=50$ (black dots) with the same parameters as in (a).}\label{Fig:spEx1}
\end{figure}

Let us consider $\omega$  in \eqref{eq:exp-exp}. We  readily find
\[
\Phi(e^{\i\theta})=\dfrac{1}{|u'_p(a)|}
\begin{pmatrix}
S_1\alpha(\theta,s_1,T)-S_2\alpha(\theta,s_2,T)& \overline{S_1\beta(\theta,s_1,T)-S_2\beta(\theta,s_2,T))}\\
S_1\beta(\theta,s_1,T)-S_2\beta(\theta,s_2,T)&S_1\alpha(\theta,s_1,T)-S_2\alpha(\theta,s_2,T)
\end{pmatrix}
\]
with $u'_p(a)$ given by \eqref{eq:u'(a)}.

For this case we have different cases depending on $T,$ see Table 1.
\begin{table}[H]
\begin{center}
\begin{tabular}{c c c c c c c c c c c c c c}
\hline\hline
Parameters & Number of solutions & Stability \\\hline
$0<T<T_1$ & One solution $u_{p,1}$& Unstable \\\hline
$T=T_1$ & Two solutions $u_{p,1}$ and $u_{p,cr}$ &Unstable \\\hline
$T\in(T_1, T_2)$ & Tree solutions $u_{p,i}$  & Unstable \\\hline
$T\geq T_2$ & Tree solutions $u_{p,i}$  & $u_{p,1}, u_{p,3}$ are unstable, \\ &  & $u_{p,2}$ is stable\\\hline\hline
\end{tabular}
\caption{$u_{p,k}=u_p(x; a_i)$, $i=1, 2, 3$ are $1$-bump periodic solutions for $a_1\le\ a_2\le a_3$. For parameters $S_1=4$, $s_1=2$, $S_2=1.5$, $s_2=1$ and $h=0.4$ we have $T_1=2.4997$, $T_2=3.3320$ and examples of $u_{p,\cdot}$ given in Fig. \ref{Fig:expexpBumps12}-Fig. \ref{Fig:expexpBumps33}.}
\label{table:1}
\end{center}
\end{table}


The solution $u_{p,1}$ is always unstable, see Table \ref{table:1}. Similarly to the previous examples, we plot spectral bounds in Fig. \ref{Fig:SpU1}. In Fig. \ref{Fig:SpU1}(b) we plot the boundaries of $\lambda_1(z)$ to illustrate that at $T=T_1$ the eigenvalue becomes less than 1, which in this case does not effect the stability of the solution.
\begin{figure}[H]
  \centering
  \subfigure[]{\includegraphics[width=7cm]{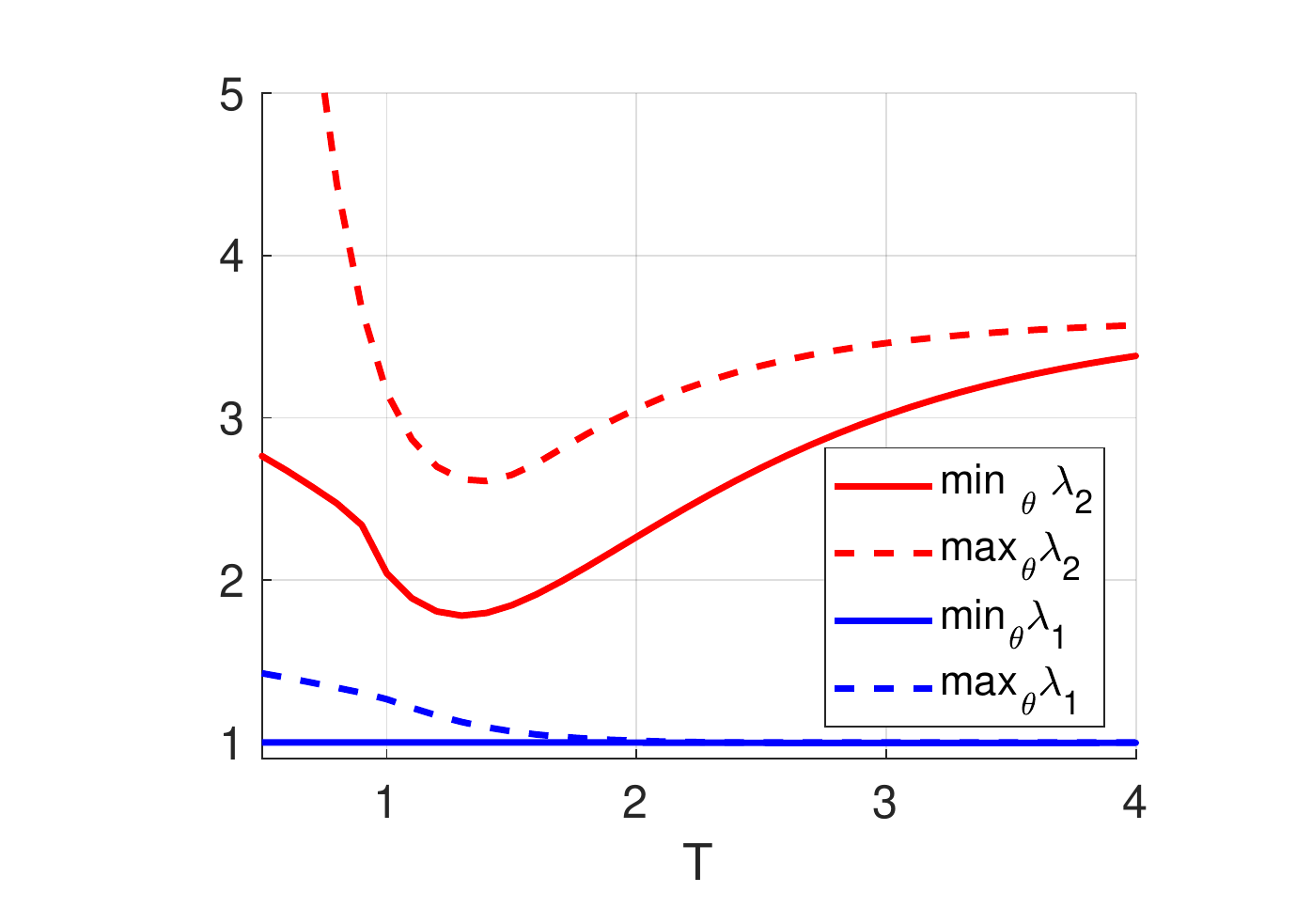}}
\subfigure[]{\includegraphics[width=7cm]{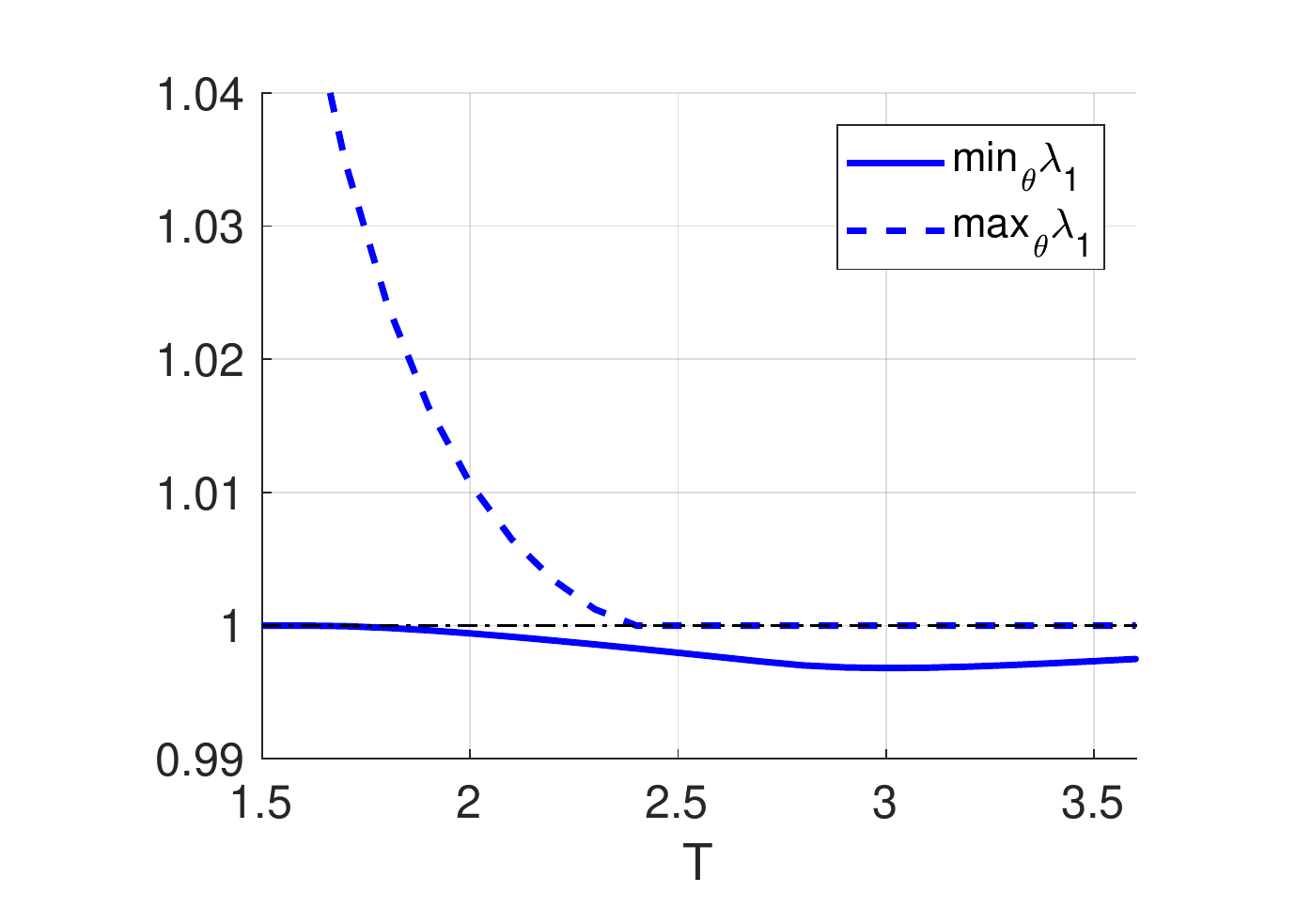}}
 \caption{Bounds for $\sigma(L)$ when $u_p=u_{p,1},$ depending on $T$. Here  $\omega$ is given by \eqref{eq:exp-exp} with  $S_1=4$, $s_1=2$, $S_2=1.5$, $s_2=1$ and $h=0.4,$ see Table \ref{table:1}. }
 \label{Fig:SpU1}
\end{figure}


The period $T=T_1$ corresponds to the critical situation where the new linearly unstable solution $u_{p,cr}$ appears, and splits into two unstable solutions $u_{p,2}$ and $u_{p,3}$ for $T=T_1+\epsilon,$ $\epsilon>0.$
The spectrum of $L$ in this case has no spectral gap, see Fig. \ref{Fig:lambda12Ucr}.

\begin{figure}[H]
  \centering
\includegraphics[width=7cm]{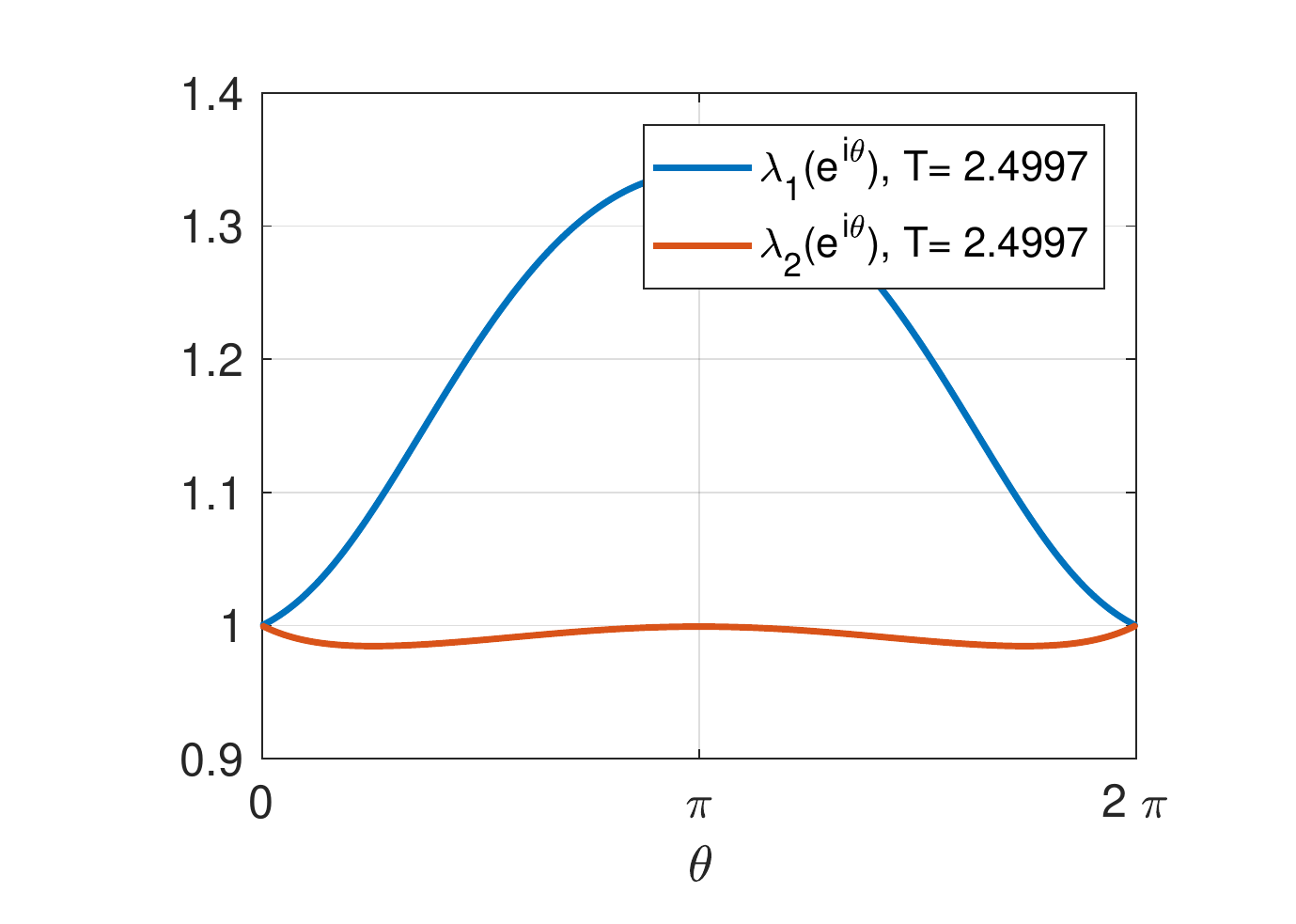}
 \caption{The eigenvalues $\lambda_1(e^{\i \theta})$ and $\lambda_1(e^{\i \theta})$ when $u_p=u_{p,cr}$. Here $\omega$ is given as in \eqref{eq:exp-exp} with $S_1=4, s_1=2$, $S_2=1.5$, $s_2=1,$ $h=0.4,$ the critical period value  $T=T_1=2.4997$ giving $\sigma(L)=[9.8460,1.3403]$ (all the approximated values are rounded up to 4 decimals). }
 \label{Fig:lambda12Ucr}
\end{figure}

For the solution $u_{p,2}$ we plot the bifurcation diagram in Fig. \ref{Fig:SpU2}.
The red curves corresponds to the minimum and maximum of $\lambda_2$ and blue to the minimum and maximum values of $\lambda_1$ for different $T.$ From \eqref{eq:lambdas} the spectrum of $\cH'(u_{p,2})$ lies in between of red and blue curves.

\begin{figure}[H]
  \centering
\includegraphics[width=7cm]{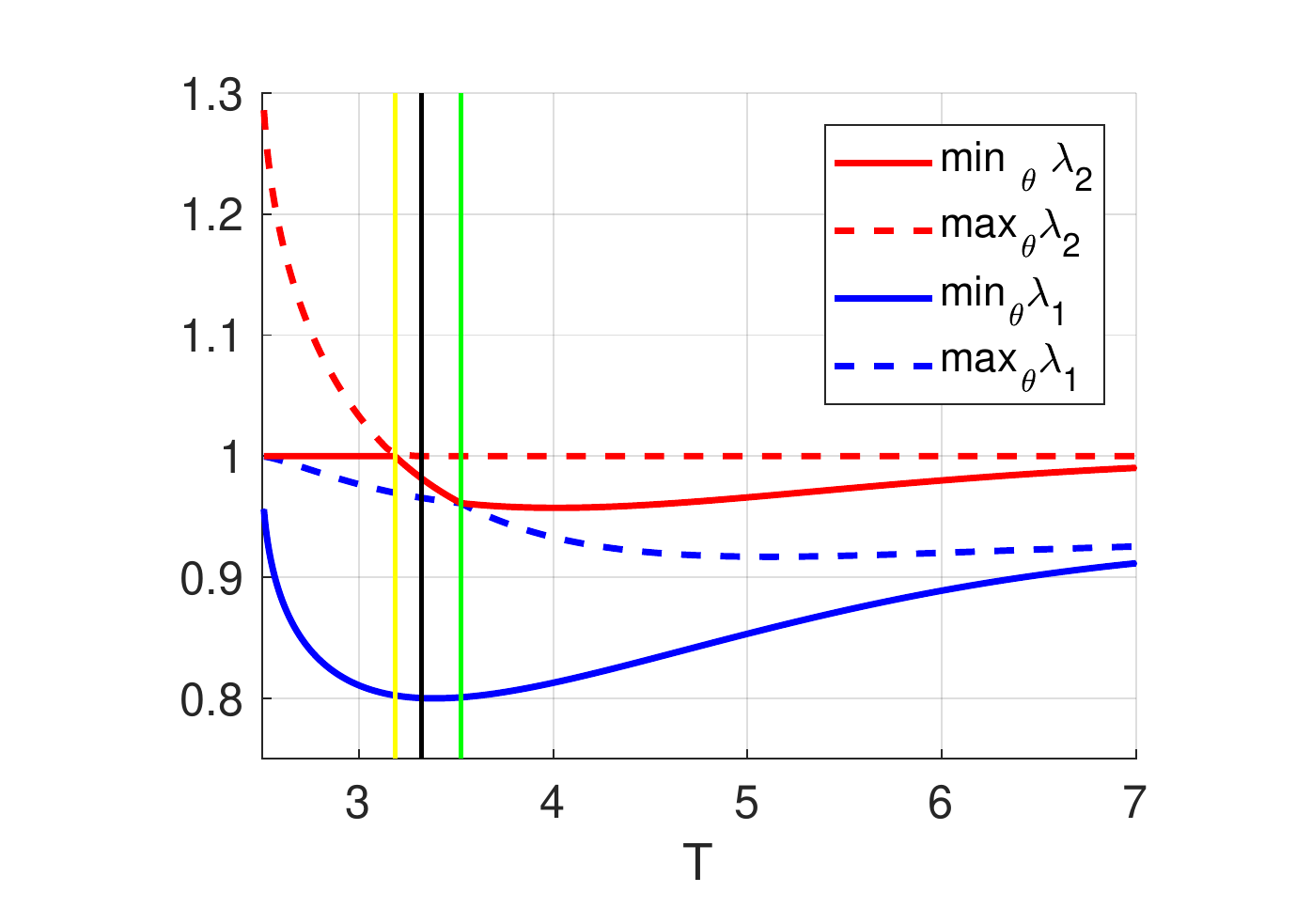}
 \caption{Bounds for $\sigma(L),$ when $u_p=u_{p,2}$, depending on $T$. The marked values corresponds to  $T=3.1849$ (yellow), $T=3.3320$ (black) and $T=3.5243$ (green) (all the values are rounded up to 4 decimals).  Here $\omega$ is given as in \eqref{eq:exp-exp} with $S_1=4, s_1=2$, $S_2=1.5$, $s_2=1,$ $h=0.4,$ see Table \ref{table:1}. }
 \label{Fig:SpU2}
\end{figure}

 The point $T=3.1849$ in Fig.\ref{Fig:SpU2} seemingly appears as a bifurcation point. This is however not the case and $T=3.1849$ only corresponds to the situation when minimum of $\lambda_2(e^{\ii \theta})$ becomes negative. In order to clarify this point we plot $\lambda_{2}(e^{\ii \theta})$ for $T=3.18,$ $T=3.1849$ in  Fig. \ref{Fig:lambdasU2}(a).
 We also plot $\lambda_{2}(e^{\ii \theta})$ for $T=3.25$ and the bifurcation point $T=T_2=3.3320$ in  Fig. \ref{Fig:lambdasU2}(a).
 For $T=3.5243$ the spectrum is again a connected set $\sigma(L)=[0.8007,1]$, see Fig.\ref{Fig:lambdasU2}(b).

\begin{figure}[H]
  \centering
  \subfigure[]{  \includegraphics[width=7cm]{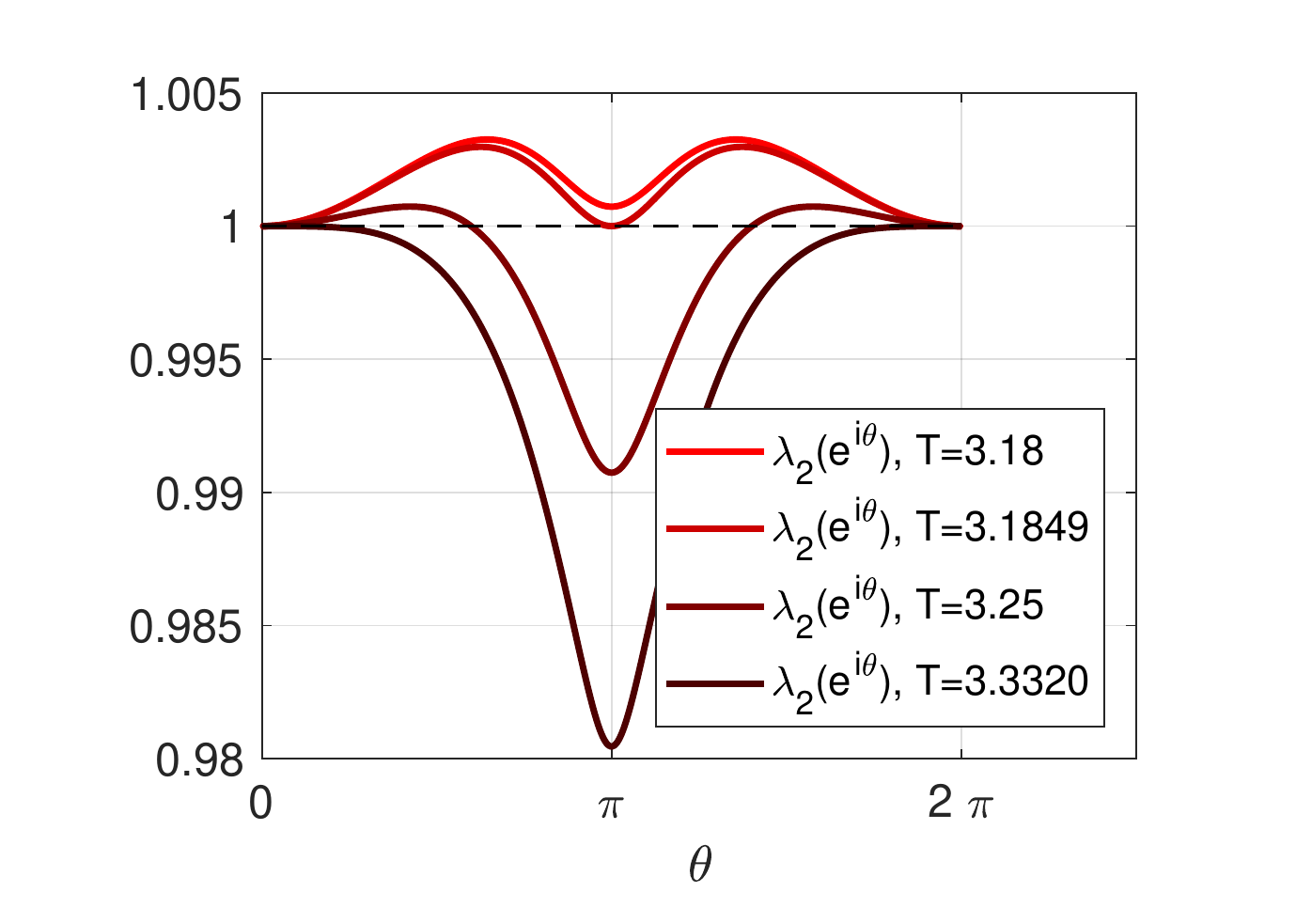}}
  \subfigure[]{\includegraphics[width=7cm]{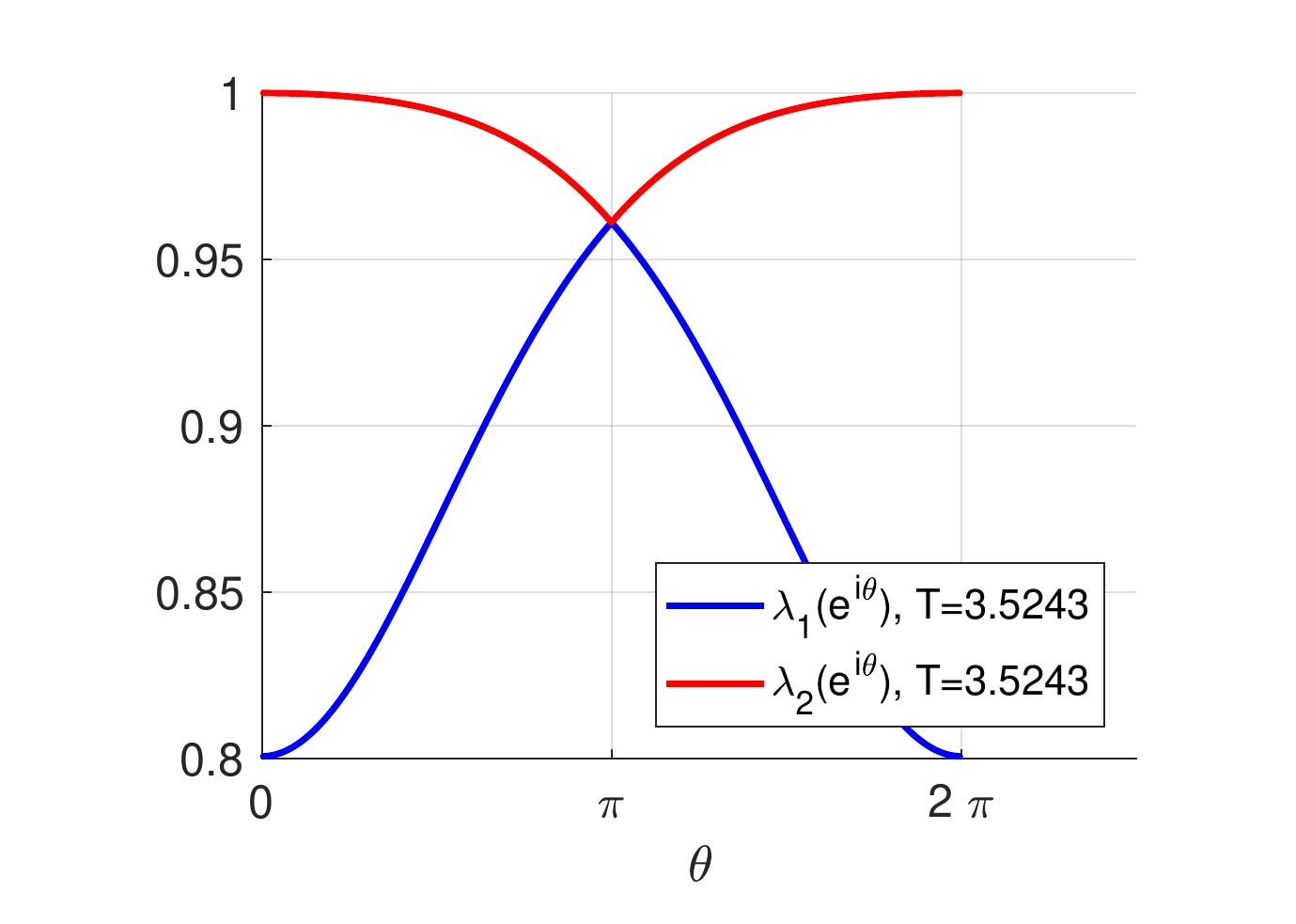}}
 \caption{The eigenvalue $\lambda_2(e^{\ii \theta})$ in (a) and $\lambda_{1,2} (e^{\ii \theta})$ in (b) when $u_p=u_{p,2},$ see Table \ref{table:1} for different $T$.  Here $\omega$ is given as in \eqref{eq:exp-exp} with $S_1=4, s_1=2$, $S_2=1.5$, $s_2=1,$ $h=0.4.$}
 \label{Fig:lambdasU2}
\end{figure}

Similarly, we plot the spectral bounds for $u_{p,3}$ in Fig.\ref{Fig:SpU3}.
\begin{figure}[H]
  \centering
\includegraphics[width=7cm]{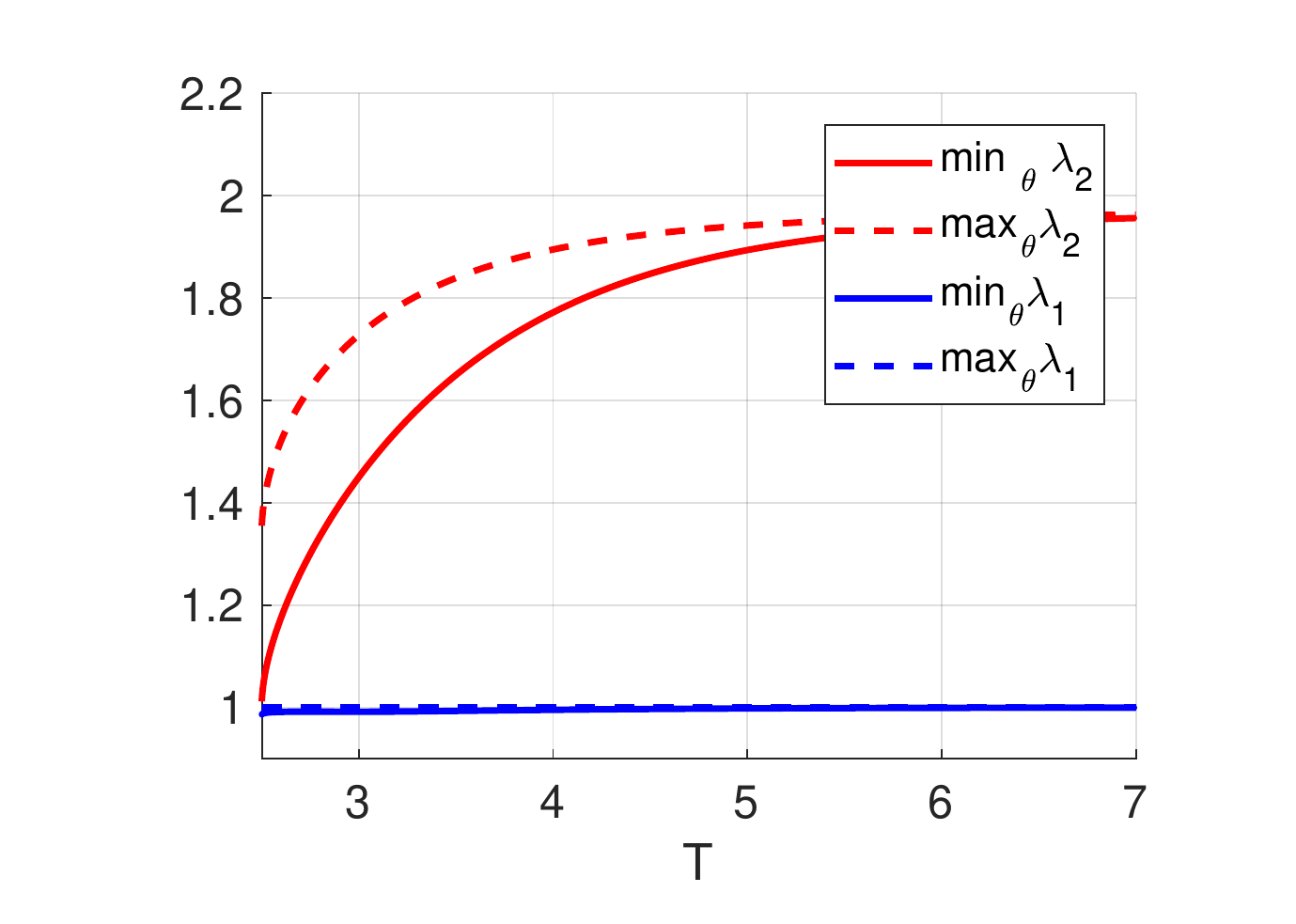}
 \caption{Bounds for $\sigma(L)$ when $u_p=u_{p,3},$ depending on $T$, see Table \ref{table:1}. Here $\omega$ is given as in \eqref{eq:exp-exp} with $S_1=4, s_1=2$, $S_2=1.5$, $s_2=1,$ $h=0.4.$  }
 \label{Fig:SpU3}
\end{figure}

As $T\to \infty$ the limiting values could be calculated from \eqref{eq:lambdas12} once the limiting expression for $a(T)$ is obtained. The calculations however are cumbersome and we omit them here. The numerical calculations however indicate, as illustrated in Fig.\ref{Fig:SpU1}, \ref{Fig:SpU2} and \ref{Fig:SpU3}, that there are no stability changes for larger period $T.$

We plot examples of $\lambda_1(e^{\i\theta})$ and $\lambda_2(e^{\i\theta})$ as functions of $\theta$ for $T=1.5$,$T=3.2$ and $T=3.5$ for every solution $u_{p,i},$ $i=1,2,3,$ in Fig. \ref{spec2}-\ref{spec2c}.

\begin{figure}[H]
  \centering
  \includegraphics[width=7cm]{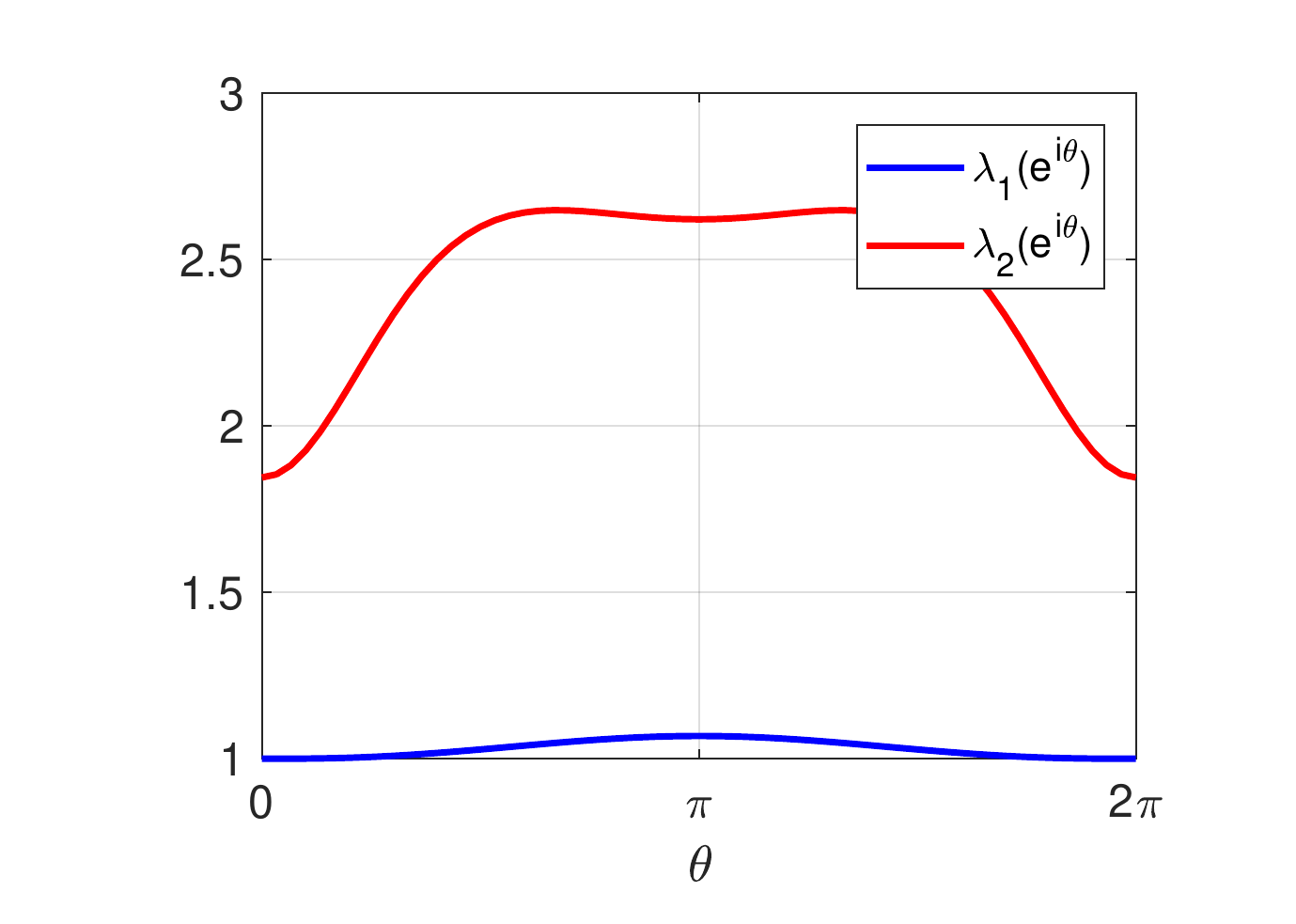}
  \caption{The eigenvalues $\lambda_{1,2}(e^{\i\theta})$ when $u_p=u_{p,1}$. Here $\omega$ is given as in \eqref{eq:exp-exp} with $S_1=4, s_1=2$, $S_2=1.5$, $s_2=1,$ $h=0.4$, and $T=1.5$. The resulting spectrum $\sigma(L)=[1, 1.0684]\cup[1.8449, 2.6479]$}\label{spec2}
\end{figure}


\begin{figure}[H]
  \centering
  \subfigure[]{\includegraphics[width=4.5cm]{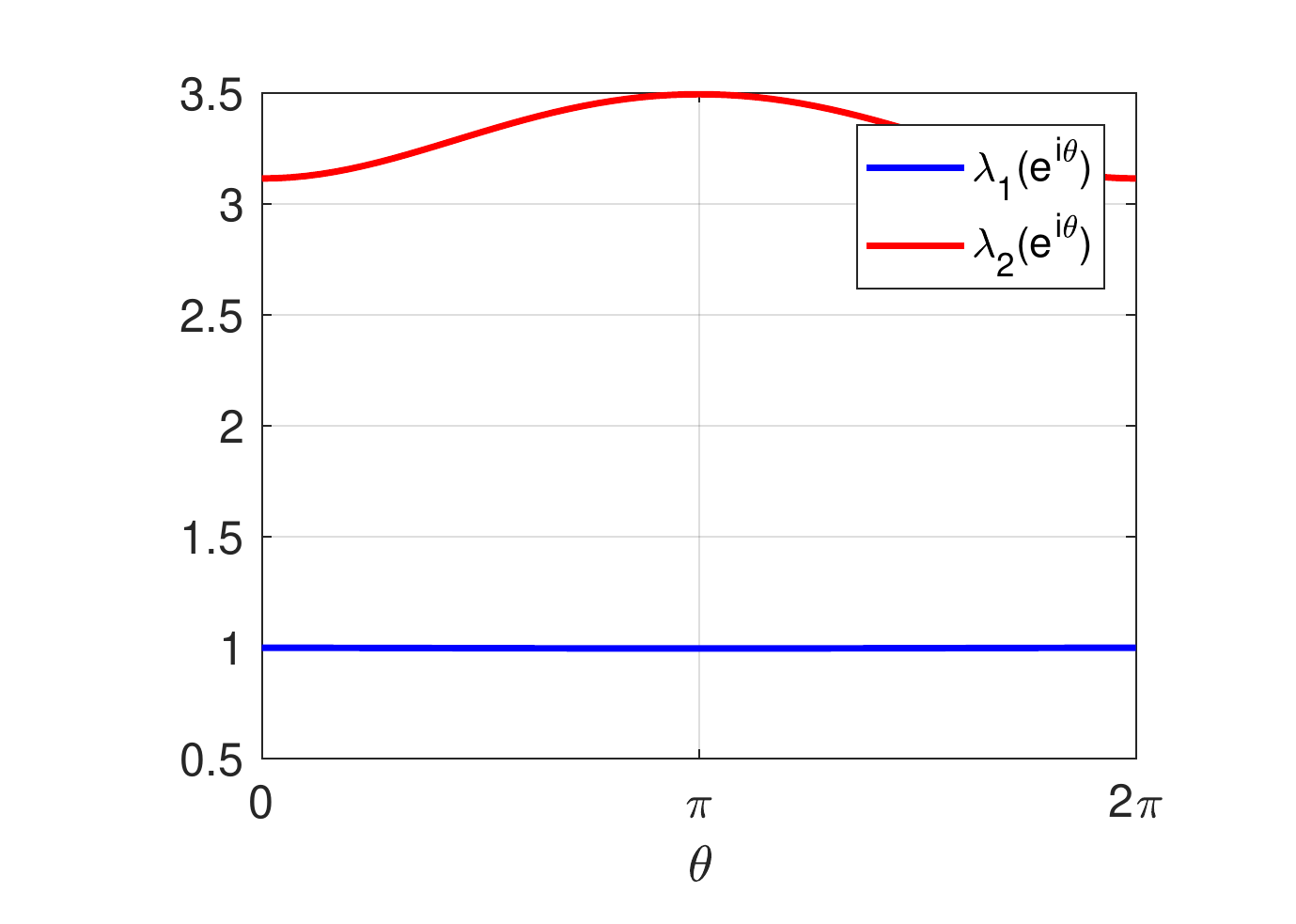}}
  \subfigure[]{\includegraphics[width=4.5cm]{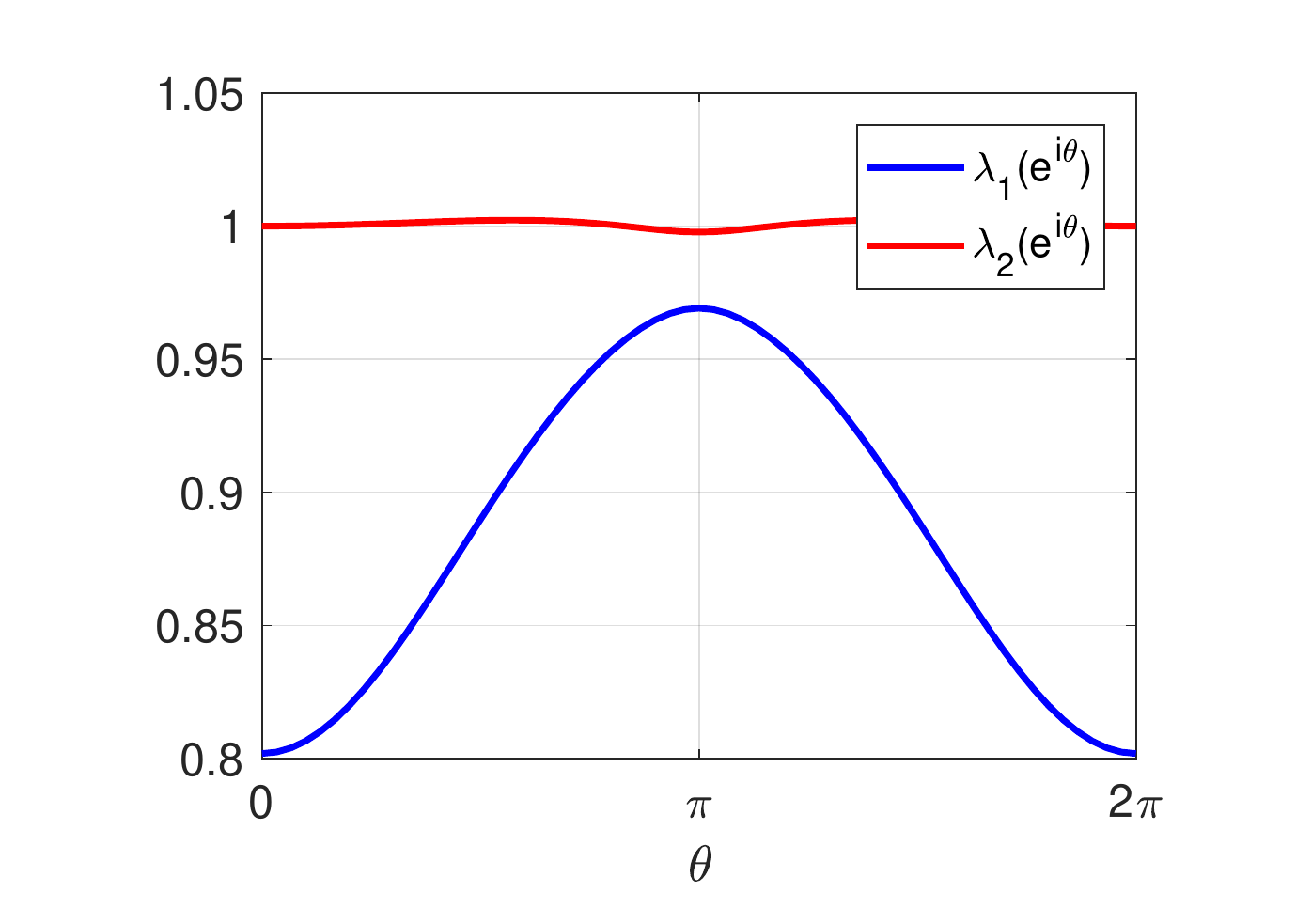}}
  \subfigure[]{\includegraphics[width=4.5cm]{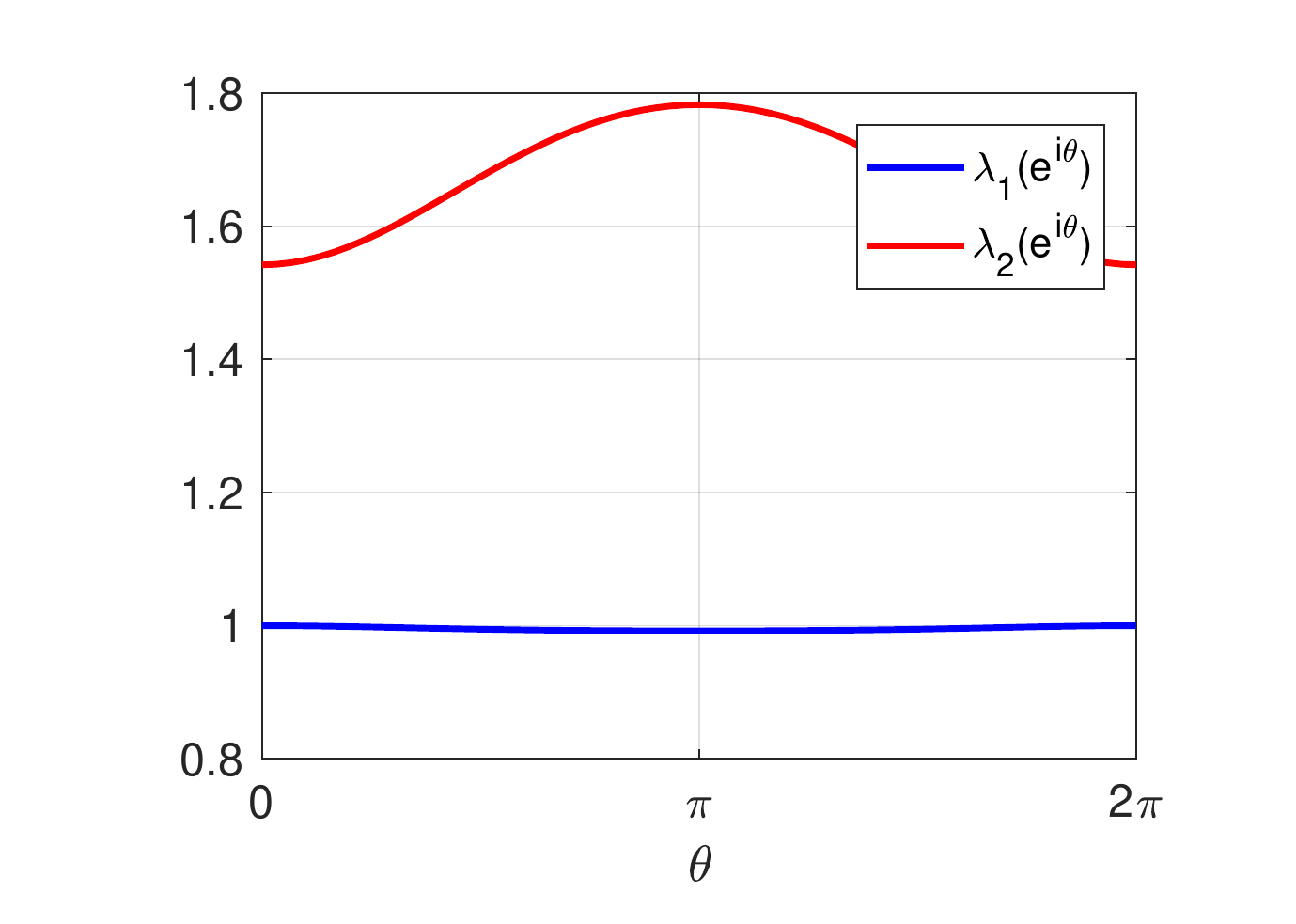}}
  \caption{The eigenvalues $\lambda_{1,2}(e^{\ii\theta})$ for $u_p=u_{p,1}$ in (a), $u_p=u_{p,2}$ in (b) and 
  $u_p=u_{p,3}$ in (c).  Here $\omega$ is given as in \eqref{eq:exp-exp} with $S_1=4, s_1=2$, $S_2=1.5$, $s_2=1,$ $h=0.4,$ and $T=3.2.$  The corresponding spectra are $\sigma(L)=[0.9969, 1]\cup[3.1147, 3.4945]$ , $\sigma(L)=[0.8020, 0.9692]\cup[0.9978, 1.0022]$  and $\sigma(L)=[0.9921, 1]\cup[1.5419, 1.7825]$, respectively. (All the approximated values are rounded up to 4 decimals).}\label{spec2b}
\end{figure}

\begin{figure}[H]
  \centering
  \includegraphics[width=4.5cm]{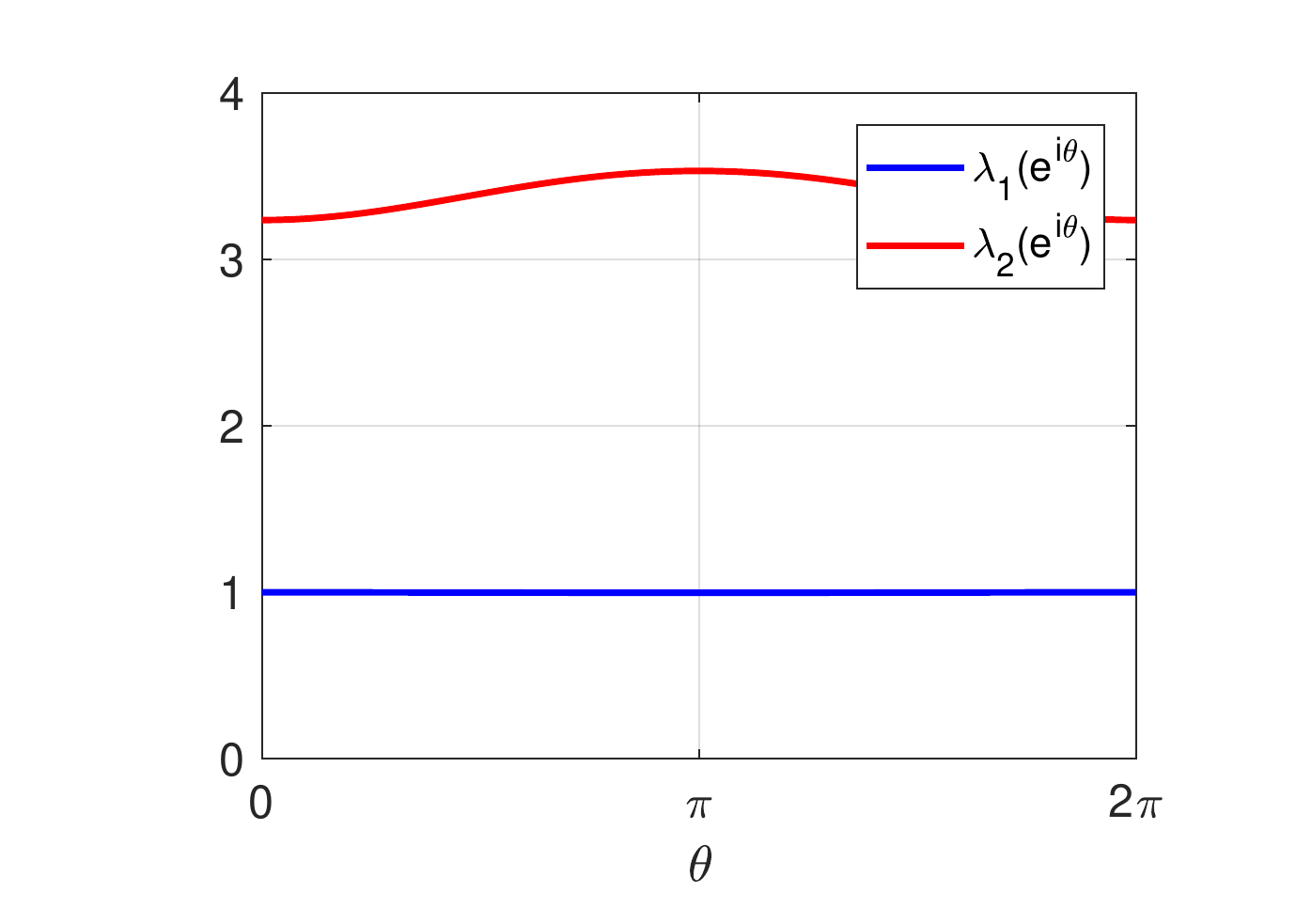}
  \includegraphics[width=4.5cm]{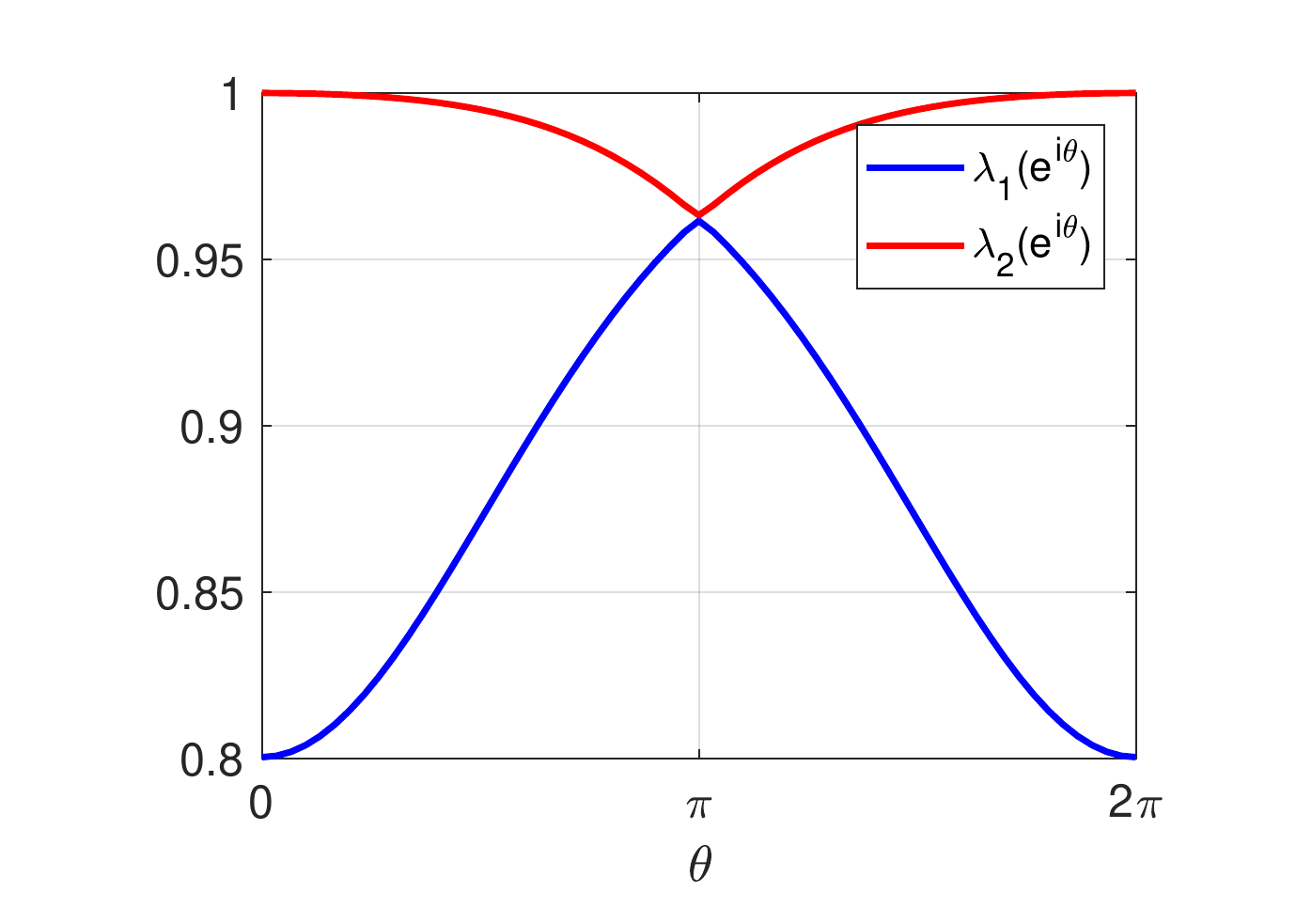}
  \includegraphics[width=4.5cm]{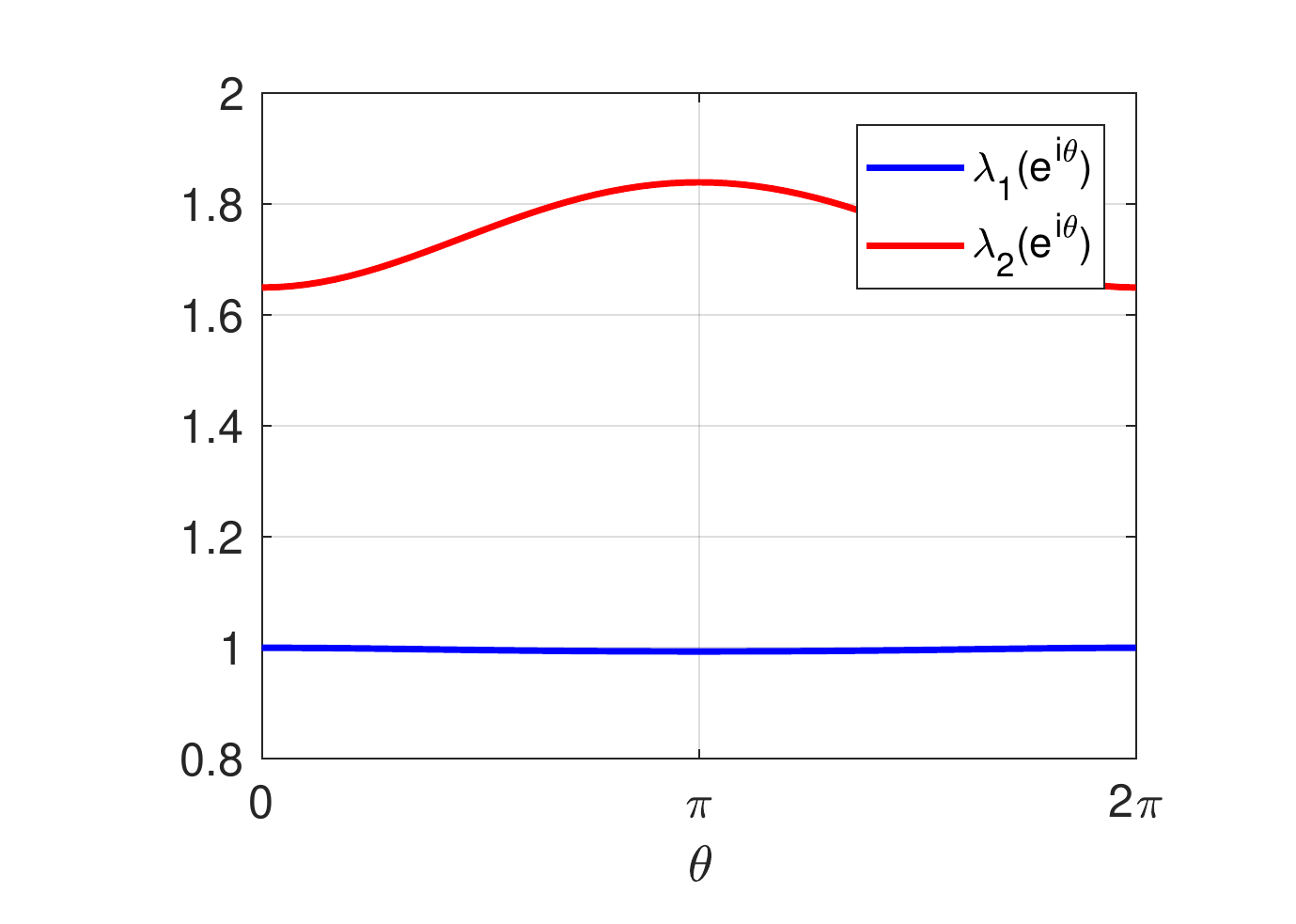}
    \caption{
    The eigenvalues $\lambda_{1,2}(e^{\ii\theta})$ for $u_p=u_{p,1}$ in (a), $u_p=u_{p,2}$ in (b) and
  $u_p=u_{p,3}$ in (c).  Here $\omega$ is given as in \eqref{eq:exp-exp} with $S_1=4, s_1=2$, $S_2=1.5$, $s_2=1,$ $h=0.4,$ and $T=3.5.$  The corresponding spectra are $\sigma(L)=[0.9973, 1]\cup[3.2365, 3.5318]$, $\sigma(L)=[0.8005, 0.9616]\cup[0.9633, 1]$  and $\sigma(L)=[0.9934, 1]\cup[1.6494, 1.8390]$, respectively. (All the approximated values are rounded up to 4 decimals).}\label{spec2c}
\end{figure}

\section{Conclusions and outlook}\label{Sec:Conclusions}

In most cases  $\omega_p$ has no analytic expression and has to be approximated.
This may lead to some difficulties in analysing the behaviour of $W_p$ on $[0, T/2]$ needed for the existence analysis and calculating the symbol $\Phi$. However, the considered approach of constructing periodic solutions is still simpler than the ODE method proposed in \cite{Krisner2007PeriodicSolutions} and, in addition, allows to address the uniqueness of solutions. Moreover, it is not restricted to a particular type of $\omega$ as in \cite{Krisner2007PeriodicSolutions}. The downside of the approach is that, at the moment, we have restricted the choice  of $f$ to the Heaviside function. This choice however allowed us to analyse linear stability of the solutions which to the best of our knowledge has not been addressed up till now.
We have shown that \eqref{Amari_model} can posses both linearly stable and unstable periodic solutions.  We conjecture that the existence and stability results hold for steep sigmoid like functions $f$, see \eqref{Eq:FiringRateFun}. To prove this conjecture we plan to proceed in the way similar to \cite{oleynik2016spatially} and \cite{OKS(Unpublished)}. It is not possible to apply the results from  the mentioned papers directly here since the eigenvalue $\lambda=1$ of $\cH'(u_p)$ is not isolated. However, the stability analysis in Section \ref{Sec:Stability} shows that the spectrum is pointwise and the eigenfunctions could be calculated, which gives a possibility of studying the dynamics of solutions on a central manifold. We plan to address this problem in our future work.

Another topic, that we have not properly addressed in this paper, is the coexistence of the localized and periodic solutions with different stability properties. The combination of the ODE methods \cite{ELVIN2010537,krisner2004ODE,DEVANEY1976431} with the results obtained here could be used to address this interesting problem.

Finally, we would like to mention that the analysis presented here could be generalized to the case of $N$-bump periodic solutions and several dimensions.

\section{Acknowledgements}

The authors are grateful to Professor Arcady Ponosov and John Wyller (Norwegian University of Life Sciences) for fruitful discussions during the preparation phase of this paper.
 We thank Professor M{\aa}rten Gulliksson (\"{O}rebro University) for constructive criticism of the manuscript.
This research work was supported by the Norwegian University of Life
Sciences and The Research Council of Norway, project number 239070.

\bibliographystyle{unsrt}

\end{document}